\theoremstyle{plain}
\newtheorem{theorem}{Theorem} 
\newtheorem{cor}[theorem]{Corollary}
\newtheorem{conj}[theorem]{Conjecture}
\newtheorem{lemma}[theorem]{Lemma}
\newtheorem*{question}{Question}
\newtheorem*{claim*}{Claim}
\theoremstyle{definition}
\newtheorem{remark}[theorem]{Remark}
\newtheorem{prob}[theorem]{Problem}
\newcommand{\Q}{\ensuremath{\mathbb{Q}}}
\newcommand{\Z}{\ensuremath{\mathbb{Z}}}
\newcommand{\RP}{\ensuremath{\mathbb{RP}}}
\newcommand{\hatQ}{\ensuremath{{\widehat{\Q}}}}
\newcommand{\A}{\ensuremath{{\mathcal A}}}
\newcommand{\B}{\ensuremath{{\mathcal B}}}
\newcommand{\comment}[1]{}
\newcommand{\bdry}{\ensuremath{\partial}}
\DeclareMathOperator{\sgn}{sgn}
\newcommand{\nbhd}{\ensuremath{\mathcal{N}}}
\newcommand{\tanglepq}[1]{
	\raisebox{-.5em}{
		\psfrag{A}[][]{\tiny{$#1$}}
		\includegraphics[height=1.5em]{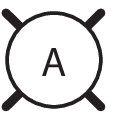}
		}
	}
\newcommand{\NE}{\mbox{\sc{ne}}}
\newcommand{\NW}{\mbox{\sc{nw}}}	
\newcommand{\SE}{\mbox{\sc{se}}}
\newcommand{\SW}{\mbox{\sc{sw}}}
\newcommand{\X}{\mbox{\sc{x}}}	
\newcommand{\NxNW}{\mbox{\sc{n}{\tiny x}\sc{nw}}}
\newcommand{\NxNE}{\mbox{\sc{n}{\tiny x}\sc{ne}}}
\newcommand{\ExNE}{\mbox{\sc{e}{\tiny x}\sc{ne}}}
\newcommand{\ExSE}{\mbox{\sc{e}{\tiny x}\sc{se}}}
\newcommand{\WxNW}{\mbox{\sc{w}{\tiny x}\sc{nw}}}
\newcommand{\WxSW}{\mbox{\sc{w}{\tiny x}\sc{sw}}}
\newcommand{\FxSW}{\mbox{\sc{f}{\tiny x}\sc{sw}}}
\newcommand{\FxNE}{\mbox{\sc{f}{\tiny x}\sc{ne}}}
\newcommand{\WSL}{\mbox{\em WSL}}
\newcommand{\BL}{\mbox{\em BL}}
\newcommand{\tanglesum}[2]{
	\raisebox{-.5em}{
		\psfrag{A}[Bc][Bc][.75]{\sc{#1}}\psfrag{B}[Bc][Bc][.75]{\sc{#2}}
		\includegraphics[height=1.5em]{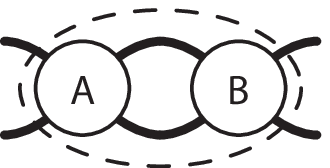}
		}
	}
\title{
On manifolds with multiple lens space filings}
\author{Kenneth L.\ Baker}
\address{
Department of Mathematics,
University of Miami,
PO Box 249085
Coral Gables, FL 33124-4250}
\email{k.baker@math.miami.edu}
\urladdr{http://math.miami.edu/\char126 kenken}
\author{Brandy Guntel Doleshal}
\address{Department of Mathematics and Statistics,
Sam Houston State University, 
Box 2206,
Huntsville, Texas 77341
}
\email{bdoleshal@shsu.edu}
\author{Neil Hoffman}
\address{Max Planck Institute for Mathematics,
Vivatsgasse 7,
53111 Bonn,
Germany}
\email{nhoffman@mpim-bonn.mpg.de}
\begin{document}

\begin{abstract}
An irreducible $3$--manifold with torus boundary either is a Seifert fibered space or admits at most three lens space fillings according to the Cyclic Surgery Theorem.  We examine the sharpness of this theorem by classifying the non-hyperbolic manifolds with more than one lens space filling, classifying the hyperbolic manifolds obtained by filling of the Minimally Twisted 5 Chain complement that have three lens space fillings, showing that the doubly primitive knots in $S^3$ and $S^1 \times S^2$ have no unexpected extra lens space surgery, and showing that the Figure Eight Knot Sister Manifold is the only non-Seifert fibered manifold with a properly embedded essential once-punctured torus and three lens space fillings.
\end{abstract}

\dedicatory{Dedicated to the 70th birthday of Professor Fico Gonz\'alez Acu\~na.}
\maketitle
\section{Introduction}
The Cyclic Surgery Theorem of Culler-Gordon-Luecke-Shalen \cite{CGLS} shows that an irreducible manifold whose boundary is a torus is either Seifert fibered or admits at most three lens space fillings.  In this article we examine the sharpness of this theorem with 
\begin{itemize}
\item Theorem~\ref{thm:nonhyperbolic} which determines the non-hyperbolic manifolds with at least two lens space fillings, 
\item Theorem~\ref{thm:MT5Ctriples} which classifies the hyperbolic manifolds that are the fillings of the exterior of the exterior of the Minimally Twisted 5 Chain link that have three lens space fillings, 
\item Theorem~\ref{thm:triplesofDPknots} which shows that the doubly primitive knots in $S^3$ and $S^1 \times S^2$ have no unexpected extra lens space surgery, and
\item Corollary~\ref{cor:figeightknottriple} which concludes that the Figure Eight Knot Sister manifold is the only non-Seifert fibered manifold with a properly embedded once-punctured torus and three lens space fillings.
\end{itemize}
Based on these, Conjecture~\ref{conj:threesurg} proposes a classification of hyperbolic manifolds with three lens space surgeries. Through these studies we have also observed a couple of behaviors that may be counter to one's expectations.
\begin{itemize}
\item Corollary~\ref{cor:notdp} shows that there are non-hyperbolic knots in lens spaces that have integral lens space surgeries which do not arise from Berge's doubly primitive construction.
\item  Corollary~\ref{cor:nonfibered} shows that there are many (non-primitive) non-fibered hyperbolic knots in lens spaces with non-trivial lens space surgeries.
\end{itemize}
Throughout we regard lens spaces as the closed, compact, connected, orientable $3$--manifolds that have a genus $1$ Heegaard splitting.  In particular, we include both $S^3$ and $S^1 \times S^2$ among the lens spaces.


\subsection{Basics of Dehn filling and Dehn surgery}
For more precise statements of our results, let us first recall some terminology about Dehn fillings and Dehn surgery.  A {\em slope} is an isotopy class of essential simple closed curves in a torus, and the {\em distance} $\Delta(r_1, r_2)$ between two slopes $r_1$ and $r_2$ is the minimum geometric intersection number among their representatives.  For a (compact, connected, oriented) $3$--manifold $M$ with torus boundary, {\em Dehn filling} $M$ along a slope $r$ in $\bdry M$ attaches a solid torus to $\bdry M$ so that $r$ bounds a meridional disk to produce both the manifold $M(r)$ and the knot $K \subset M(r)$ that is the core curve of the attached solid torus.  Given a knot $K$ in a $3$--manifold, removing a solid torus neighborhood $\nbhd(K)$ of $K$ and then Dehn filling along the resulting torus boundary is known as {\em Dehn surgery}, and the core curve of the attached solid torus is a knot called the {\em surgery dual} to $K$. 

A slope in $\bdry \nbhd(K)$ of a knot $K$ is a {\em framing} or a {\em longitude} if $K$ is isotopic to that slope in the solid torus, and a {\em framed knot} is the knot with a chosen framing.  A framed knot then has a natural {\em framed surgery} (Dehn surgery along the framing) and the surgery dual inherits a framing from the meridian of the original framed knot.  

A framing of a knot together with a meridian give a parametrization of the slopes in $\bdry \nbhd(K)$ as follows.  Choose an orientation of $K$, let $\mu$ be the meridian of $K$ oriented to link $K$ positively, and let $\lambda$ be the framing oriented to be parallel to $K$.  Then $([\mu], [\lambda])$ is a basis for $H_1(\bdry \nbhd(K);\Z)$ so that every oriented slope is represented by $p[\mu]+q[\lambda]$ for some pair of coprime integers $p,q$, and the unoriented slope is identified with $p/q \in \hatQ = \Q \cup \{1/0\}$.  For a null homologous knot, the framing by the Seifert surface is often used for this parametrization.

\subsection{Triples of Lens Space Fillings and Notable Knots, Links, and Manifolds}
\begin{figure}
\begin{tabular}{ccccccc}
\includegraphics[width=1.25in]{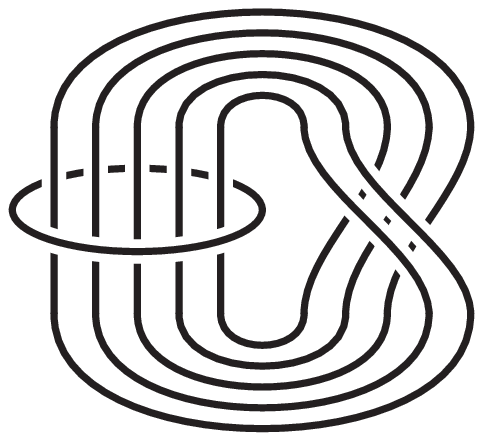} 
 \includegraphics[width=1.25in]{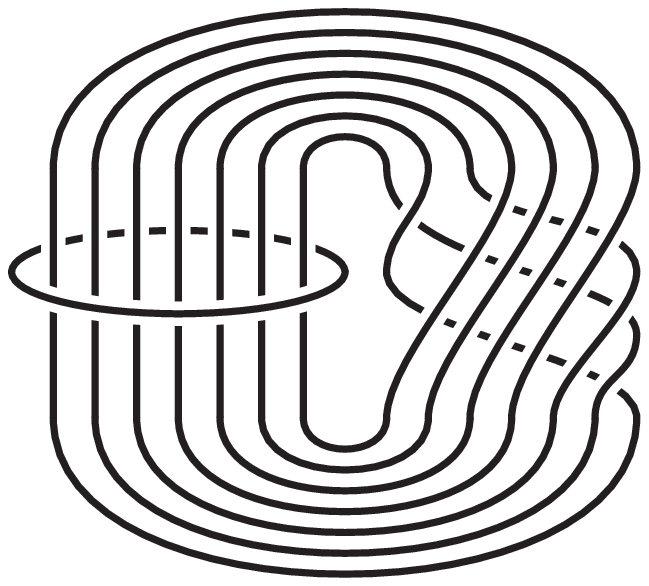}
\includegraphics[width=1.25in]{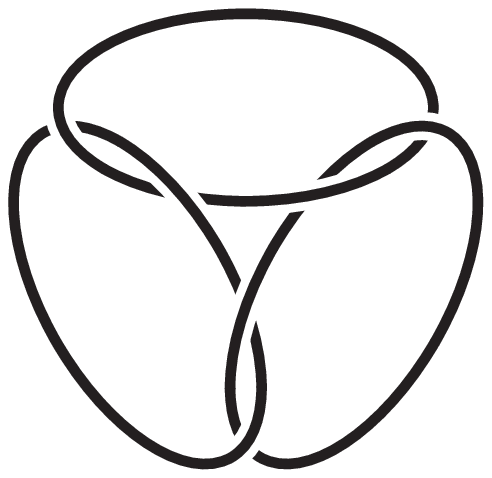}  
\includegraphics[width=1.25in]{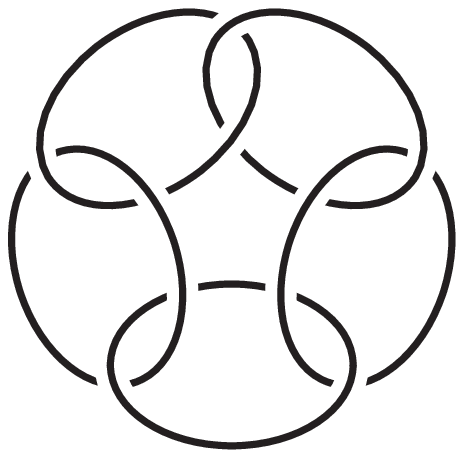} 
\end{tabular}
\caption{The Whitehead Sister Link, the Berge Link, the Magic Link, and the Minimally Twisted $5$ Chain.}
\label{fig:mainlinks}
\end{figure}
Figure~\ref{fig:mainlinks} shows four links. From left to right these are the Whitehead Sister Link which is the $(-2,3,8)$--pretzel link and denoted $\WSL$, the Berge Link denoted $\BL$, the Magic Link (which is the $(2,2,2)$--pretzel link), and the Minimally Twisted $5$ Chain which we abbreviate MT5C.  The exteriors of these links are the Whitehead Sister Manifold ($m125$ in the SnapPea Census \cite{snappy,callahanhildebrandweeks}), the Berge Manifold ($m202$), and the Magic Manifold ($s776$).  Following \cite{MPR}, we also let $M_3$ and $M_5$ denote exteriors of the last two.  (Note that in \cite{MPMagic2006} the mirror of $M_3$ is used and denoted $N$.)  For these links and their exteriors we use the parametrization of their slopes with the standard Seifert surface framing of each individual component to describe their Dehn surgeries and Dehn fillings. 

 Let $\WSL_{p/q}$ and $\BL_{p/q}$ denote the knot resulting from the Whitehead Sister Link or the Berge Link respectively by $p/q$ Dehn surgery on the unknotted component of those links.  It so happens that the knots $\WSL_{-1}$ and $\BL_{\infty}$  are both the $(-2,3,7)$--pretzel knot (with exterior $m016$) which is famously known to have two non-trivial lens space surgeries \cite{fintushelstern}.  Indeed $\WSL_{+1}$ is also a hyperbolic knot in $S^3$ with three lens space surgeries as are the knots $\BL_{1/n}$ for all integers $n$.  As one may observe from Tables A.6 and A.7 of \cite{MPMagic2006}, the knots $\WSL_{p}$ and $\BL_{p/q}$ are generically hyperbolic knots in lens spaces with two non-trivial lens space surgeries.   A more careful analysis of \cite{MPMagic2006} however reveals that the family of hyperbolic manifolds with three lens space fillings that come from the Whitehead Sister Manifold belong to a broader family.

\begin{figure}
\centering
\includegraphics[width=5in]{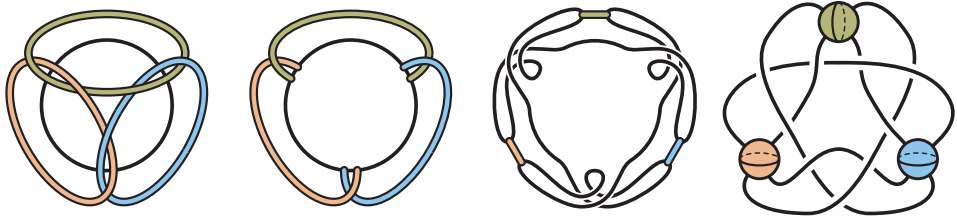}
\caption{The Magic Manifold is the double branched cover of the tangle $P_3$. }
\label{fig:magictangle}
\end{figure}

\begin{figure}
\centering
\psfrag{A}{\huge{$A'_{m,n}$}}
\psfrag{a}[c][c]{$P_3(3-\tfrac1m,2-\tfrac1n)$}
\psfrag{m}{\small{$m$}}
\psfrag{n}{\small{$n$}}
\psfrag{s}{$\simeq$}
\includegraphics[width=\textwidth]{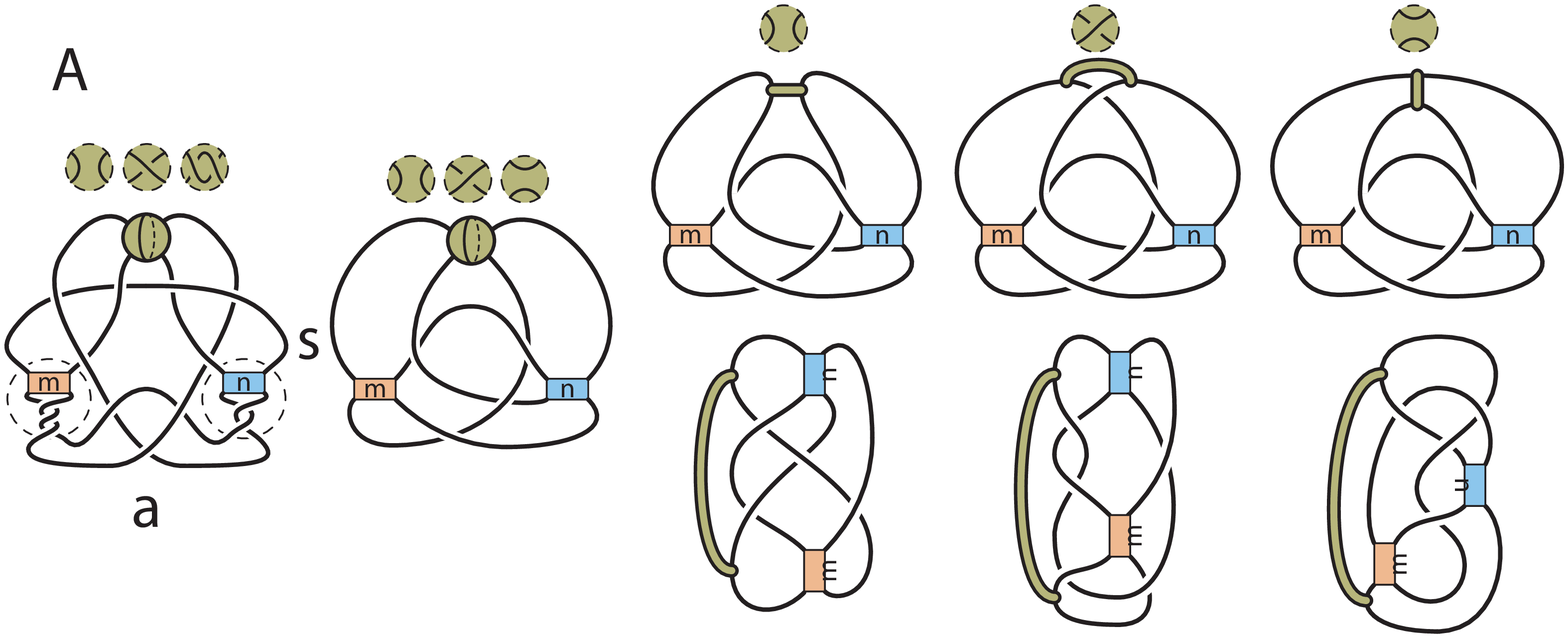}
\caption{The tangle $A'_{m,n}$ with its triple of two-bridge fillings.}
\label{fig:FamilyA}
\end{figure}
\begin{figure}
\centering
\psfrag{B}{\huge{$B'_{p/q}$}}
\psfrag{b}[c][c]{$P_3(5/2,p/q)$}
\psfrag{p}[c][c]{\small{$p/q$}}
\psfrag{q}[c][c]{\tiny{$\tfrac{p}{q}-2$}}
\psfrag{s}{$\simeq$}
\includegraphics[width=\textwidth]{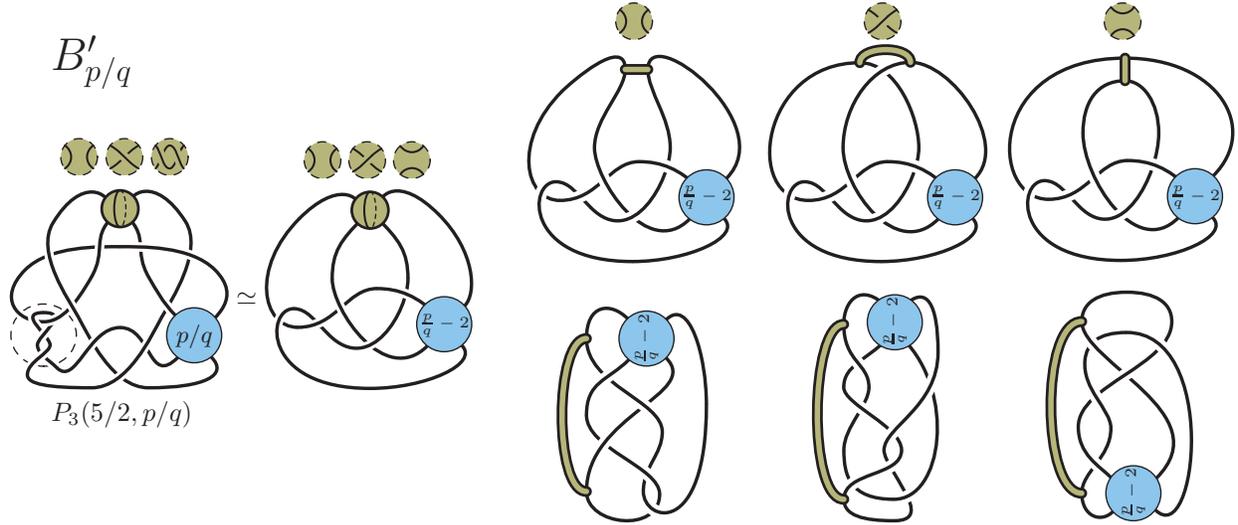}
\caption{The tangle $B'_{p/q}$ with its triple of two-bridge fillings.}
\label{fig:FamilyB}
\end{figure}
Figure~\ref{fig:magictangle} shows a tangle $P_3$ that is the quotient of $M_3$ by a strong involution.
Figures~\ref{fig:FamilyA} and \ref{fig:FamilyB} show the generic members of two families of tangles $\A' = \{A'_{m,n} \vert m,n\in\Z\}$ and  $\B' = \{B'_{p/q} \vert p/q \in \hatQ\}$ which are fillings of the tangle $P_3$.  Each admits three rational tangle fillings that produce two-bridge links as demonstrated in the last three columns of those figures.   Let $\A$ and $\B$ be the respective families of manifolds obtained as double branched covers of these tangles.   In terms of the Magic Manifold, these are explicitly the families of manifolds $\A = \{A_{m,n} = M_3(3-\tfrac1m,2-\tfrac1n) \vert m,n\in\Z\}$ with the three lens space fillings $\infty$, $1$, $2$ and $\B = \{B_{p/q} = M_3(5/2,p/q) \vert p/q \in \hatQ\}$ also with the three lens space fillings $\infty$, $1$, $2$.  
We note that the exterior of the knot $\WSL_p$ is homeomorphic to the manifold $A_{p+4,2}$ and the exterior of the knot $\BL_{r/s}$ is homeomorphic to the manifold $\B_{(4r+7s)/(r+2s)}$.



\begin{theorem}\label{thm:MT5Ctriples}
Let $M$ be a $3$--manifold obtained by filling four boundary components of $M_5$.  If $M$ admits three distinct lens space fillings, then either $M$ is the exterior of a trivial knot in a lens space, $M$ is the exterior of a torus knot in a lens space, or $M$ with its lens space fillings are homeomorphic to a member of family $\A$ or $\B$. 
\end{theorem}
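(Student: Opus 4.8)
The plan is to split on whether the one-cusped manifold $M$ is hyperbolic. If $M$ is not hyperbolic, then since it has at least two lens space fillings Theorem~\ref{thm:nonhyperbolic} already describes it; intersecting that list with the manifolds that arise by filling four cusps of $M_5$ leaves exactly the exteriors of trivial knots and of torus knots in lens spaces, which is the conclusion we want. So from now on assume $M$ is hyperbolic, and write $M = M_5(\gamma_1,\gamma_2,\gamma_3,\gamma_4)$ for slopes $\gamma_1,\dots,\gamma_4$ on four of the five cusps of $M_5$.

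First I would understand the three lens space slopes on $\bdry M$. Since $M$ is hyperbolic it is not Seifert fibered, so the Cyclic Surgery Theorem \cite{CGLS} applies: any two slopes whose fillings have cyclic fundamental group are at distance at most $1$. The three lens space slopes are distinct and each yields a manifold with cyclic fundamental group, so they are pairwise at distance exactly $1$ — a Farey triangle — and after choosing a basis for $H_1(\bdry M)$ we may arrange them to be $\infty$, $1$, and $2$, matching the normalization used for the families $\A$ and $\B$. Hence every lens space filling of $M$ under consideration is a lens space filling of $M_5$ obtained from a slope $5$-tuple $(\gamma_1,\gamma_2,\gamma_3,\gamma_4;\delta)$, and the problem becomes: classify, up to the (large, known) isometry group of $M_5$, the $4$-tuples $(\gamma_1,\gamma_2,\gamma_3,\gamma_4)$ for which the set of slopes $\delta$ making $M_5(\gamma_1,\gamma_2,\gamma_3,\gamma_4;\delta)$ a lens space contains a Farey triangle.

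The core of the argument is this enumeration, for which I would use the classification of exceptional Dehn fillings of $M_5$ from \cite{MPR}. A homology computation from the linking matrix of the minimally twisted $5$-chain first cuts the candidate $5$-tuples (those with cyclic $H_1$) down to finitely many sporadic configurations together with a bounded number of one- and two-parameter families in which some partial filling degenerates to a solid torus, a torus knot exterior, or a small Seifert fibered piece; the geometric classification of \cite{MPR} then isolates which of these are actually lens spaces. Imposing the requirement that three such $5$-tuples agree in four coordinates with the fifth running over a Farey triangle, and discarding the configurations in which $M$ is not hyperbolic (those already accounted for as torus or trivial knot exteriors), the survivors should be precisely the families $\A$ and $\B$. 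For each survivor one identifies $M_3$ with the appropriate $2$-cusp filling of $M_5$ and follows the remaining two fillings through Figures~\ref{fig:magictangle}, \ref{fig:FamilyA}, and \ref{fig:FamilyB} — also using the analysis of \cite{MPMagic2006} — to recognize $M$, together with its triple of lens space fillings, as a specific $A_{m,n}$ or $B_{p/q}$.

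The main obstacle is this last enumeration. The difficulty is organizational rather than conceptual: $M_5$ has five cusps and a rich isometry group, so keeping the combinatorics of its lens space fillings under control — and, crucially, distinguishing the fillings that leave a genuinely hyperbolic $M$ from those that accidentally produce a Seifert fibered manifold — is delicate, as is carrying through the change of basis on $\bdry M$ needed to turn an abstract lens space triple into the normalized triple $\{\infty,1,2\}$ and thereby match it to a member of $\A$ or $\B$ on the nose. A related subtlety is the boundary between the two cases of the theorem: some members of $\A$ and $\B$ are themselves non-hyperbolic and then reappear among the torus and trivial knot exteriors, so the case division must be set up so that no configuration is lost or double counted.
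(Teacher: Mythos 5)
Your skeleton matches the paper's: handle the non-hyperbolic case by Theorem~\ref{thm:nonhyperbolic}, and in the hyperbolic case use \cite{CGLS} to see the three lens space slopes form a mutually distance-one (Farey) triple, then enumerate. But the enumeration is the entire content of the theorem, and the method you propose for it does not work as described. First, the claim that a homology computation from the linking matrix ``cuts the candidate $5$-tuples down to finitely many sporadic configurations together with a bounded number of one- and two-parameter families'' is not correct: having cyclic $H_1$ is a single determinant/cokernel condition on the filling parameters and leaves infinitely many multi-parameter families, so homology alone produces no such reduction. Second, \cite{MPR} does not supply a parametrized classification of which closed fillings of $M_5$ are lens spaces in a form you could intersect with the Farey-triple condition; the paper uses \cite{MPR} only to normalize \emph{one} of the three lens space slopes to $\infty$ on the fifth cusp (Theorem~1.2 and Corollary~1.3 there), after which all of the remaining work is original to this paper.

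That remaining work is the step your proposal is missing: Theorem~\ref{thm:MT5CimpliesMM} (equivalently, the tangle statement Theorem~\ref{thm:fillingsimplifies}), which shows that a hyperbolic filling of $M_5$ with three lens space fillings must factor through the Magic Manifold $M_3$. The paper proves this by passing to the quotient pentangle $P_5$, normalizing the three filling slopes to $\{0,\infty,-1\}$, observing that each of the three resulting links $L_0$, $L_\infty$, $L_{-1}$ contains an obvious Conway sphere that must bound a rational tangle for the link to be two-bridge, rewriting each link as a Montesinos link, and running a $16$-case analysis (reduced to $8$ by symmetries of $P_5$) on continued-fraction conditions to conclude that the filling is non-hyperbolic or factors through $P_3$ or its mirror. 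Only then does the identification with $\A$ and $\B$ happen, via the Martelli--Petronio catalog of exceptional fillings of $M_3$ (Theorems~\ref{thm:MMtwolensfillings} and~\ref{thm:MMthreelensfillings}); the normalization of the triple to $\{\infty,1,2\}$ occurs at that $M_3$ stage, not on $\bdry M$ directly. Without the two-stage reduction $M_5 \to M_3$ or some substitute for the case analysis, your outline does not yet constitute a proof.
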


We are informed that Audoux-Lecuona-Matignon-Paoluzzi-Roukema have obtained similar results in the case that one of the lens spaces is $S^3$ \cite{roukema-personalcommunication}.

\begin{proof}
If $M$ is non-hyperbolic, then it follows from Theorem~\ref{thm:nonhyperbolic} that $M$ is the exterior of either a trivial knot or a torus knot in a lens space.  If $M$ is hyperbolic then, using Martelli-Petronio-Roukema \cite{MPR} as a starting point, Theorem~\ref{thm:MT5CimpliesMM} shows that $M$ must be homeomorphic to a filling of $M_3$.   Theorem~\ref{thm:MMthreelensfillings} then uses the Martelli-Petroinio catalog of exceptional fillings of $M_3$ \cite{MPMagic2006} to show $M$ with its lens space fillings belongs to family $\A$ or $\B$.
\end{proof}

Due to Theorems~\ref{thm:MT5Ctriples} and Theorem~\ref{thm:triplesofDPknots}, the following conjecture seems reasonable.

\begin{conj}\label{conj:threesurg}
Let $M$ be a $3$--manifold whose boundary is a torus. 
If $M$ admits three lens space fillings along distinct slopes, then either $M$ is the exterior of a torus knot in a lens space or $M$ belongs to family $\A$ or $\B$.
\end{conj}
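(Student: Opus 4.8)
The plan is to reduce to the hyperbolic case and then to imitate the proof of Theorem~\ref{thm:MT5Ctriples}, the point being to establish that every hyperbolic manifold with three lens space fillings is a Dehn filling of the Minimally Twisted $5$ Chain complement $M_5$. If $M$ is non-hyperbolic, Theorem~\ref{thm:nonhyperbolic} already lists the possibilities (trivial and torus knot exteriors in lens spaces), so we may assume $M$ is hyperbolic with distinct lens space slopes $r_1, r_2, r_3$. Since a lens space has cyclic fundamental group and $M$ is not Seifert fibered, the Cyclic Surgery Theorem \cite{CGLS} forces $\Delta(r_i, r_j) \le 1$ for each pair; as the $r_i$ are distinct, each pairwise distance is exactly $1$, so $\{r_1, r_2, r_3\}$ spans a triangle of the Farey graph of $\bdry M$. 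Applying a homeomorphism of $\bdry M$, we may arrange $(r_1, r_2, r_3) = (\infty, 0, 1)$. Thus $M$ realizes the extreme case of the Cyclic Surgery Theorem, and the problem becomes to show that there are essentially only the known such $M$.

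The heart of the argument, and the step I expect to be the main obstacle, is to show that such an $M$ is homeomorphic to a Dehn filling of $M_5$; granting this, Theorem~\ref{thm:MT5Ctriples} applies verbatim (a hyperbolic filling of $M_5$ cannot be a trivial or torus knot exterior) and places $M$ with its three lens space fillings in family $\A$ or $\B$. The natural route to the embedding into $M_5$ is through the surgery duals. Filling $M(\infty) = L$ along $0$ and $1$ exhibits the core $K \subset L$ of the $\infty$-filling solid torus as a knot in a lens space admitting two further, integral (distance one from the meridian of $K$ and from each other) lens space surgeries. One therefore wants a structure theorem: every hyperbolic knot in a lens space with two nontrivial lens space surgeries has exterior a Dehn filling of $M_3$ or $M_5$. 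Equivalently, in the spirit of Martelli--Petronio--Roukema \cite{MPR}, drilling the three dual cores should produce a cusped hyperbolic manifold $X$---admitting $M$ as a filling and the three lens spaces as compatible fillings---that is itself a filling of $M_5$, after which Theorem~\ref{thm:MT5Ctriples} finishes the proof; the proof of that theorem already carries out the analogous $M_5 \to M_3 \to \A/\B$ reduction via Theorems~\ref{thm:MT5CimpliesMM} and \ref{thm:MMthreelensfillings}.

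To prove the required structure theorem I see two complementary routes. The first is a finiteness-plus-census argument: bounds of Lackenby--Meyerhoff type on the number and pairwise distances of exceptional slopes of a one-cusped hyperbolic manifold constrain the drilled manifold $X$ enough that, after a ``no unexpected exceptional fillings above a volume threshold'' bootstrap, one reduces to a finite list of cusped manifolds to be matched by computer against the catalogue of exceptional fillings of $M_5$ in \cite{MPR}. The second route leverages the doubly primitive analysis: combining Theorem~\ref{thm:triplesofDPknots} with the Berge--Gabai classification of knots with lens space surgeries lying in a solid torus should dispatch the dual knots $K$ that are doubly primitive or that lie in a Heegaard solid torus, leaving a controlled remainder to be treated by the first route. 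The case in which one of the three lens spaces is $S^3$---treated by Audoux--Lecuona--Matignon--Paoluzzi--Roukema \cite{roukema-personalcommunication}---is encouraging evidence that the reduction to $M_5$ is the correct target; the genuine difficulty is to carry it out uniformly over all lens spaces, where the Seifert-surface framing of $K$ and the sharp Heegaard Floer constraints available for integral $L$-space surgeries in $S^3$ are not at one's disposal in the same form.
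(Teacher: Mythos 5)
This statement is Conjecture~\ref{conj:threesurg}: the paper does not prove it, and offers only the evidence of Theorems~\ref{thm:MT5Ctriples} and \ref{thm:triplesofDPknots} for why it ``seems reasonable.'' Your proposal does not close that gap. The preliminary reductions are fine --- Theorem~\ref{thm:nonhyperbolic} disposes of the non-hyperbolic case, and the Cyclic Surgery Theorem forces the three slopes to be mutually distance $1$ --- but everything after that hangs on the ``structure theorem'' that every hyperbolic $M$ with three lens space fillings embeds as a filling of $M_5$ (or that every hyperbolic knot in a lens space with two nontrivial lens space surgeries has exterior a filling of $M_5$). Given Theorem~\ref{thm:MT5Ctriples}, that assertion is essentially a restatement of the conjecture, not a step toward proving it; you have correctly located the difficulty but not resolved it.

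Neither of your two proposed routes works as described. For the finiteness-plus-census route: Lackenby--Meyerhoff type bounds control the \emph{number} of exceptional slopes and their pairwise \emph{distances}, but three exceptional fillings at mutual distance $1$ impose no volume bound and no finiteness on the ambient cusped manifold $X$ --- the families $\A$ and $\B$ themselves already show there are infinitely many such manifolds of unbounded volume, so there is no finite list to match against the catalogue of \cite{MPR}. Finiteness results of this flavor require on the order of nine or more exceptional slopes, far beyond what is available here. For the second route: invoking Theorem~\ref{thm:triplesofDPknots} or the Berge--Gabai classification presupposes that the surgery duals are doubly primitive or lie in a Heegaard solid torus, which is exactly what one does not know a priori; indeed the paper's Corollary~\ref{cor:notdp} exhibits knots with integral lens space surgeries that are not doubly primitive, so the ``controlled remainder'' you defer back to route one is not known to be empty or even small. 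In short, the proposal is a reasonable research program but not a proof, and the statement remains open.
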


Here, a torus knot is any knot that embeds as an essential simple closed curve in a Heegaard torus of a lens space.  In a lens space, the trivial knot (i.e.\ the knot that bounds an embedded disk) is therefore included among the torus knots even though its exterior is not Seifert fibered in general.

\subsection{Doubly primitive knots and $(1,1)$--knots}

Following Berge \cite{TheBergeResult}, a knot $K$ in a $3$--manifold $M$ is {\em doubly primitive} if there is a genus $2$ Heegaard surface $\Sigma$ for $M$ containing $K$ such that to each side of $\Sigma$ there is a compressing disk whose boundary transversally intersects $K$ once.  We further say a framed knot $K$ is doubly primitive if there is such a Heegaard surface $\Sigma$ containing $K$ for which the induced surface slope is the framing.  Surgery on a doubly primitive framed knot along its doubly primitive framing produces a lens space.

Given a knot $K$ in a lens space $L(p,q)$ with a genus $1$ Heegaard splitting into two solid tori $V_\alpha$ and $V_\beta$, we say $K$ has a $(1,n)$--presentation for a positive integer $n$ if it may be expressed as the union of $n$ boundary-parallel arcs in each $V_\alpha$ and $V_\beta$.  We further say it is a $(1,n)$--knot, a {\em genus one $n$--bridge knot}, if it does not have a $(1,n-1)$--presentation; that is, the genus one bridge number of the knot is $n$. One may care to regard torus knots as $(1,0)$--knots, but it is convenient to include them among the $(1,1)$--knots.

The next lemma is a direct consequence of the definition of doubly primitive, perhaps implicit in Berge's work \cite{TheBergeResult}. Indeed the framed knot dual to any longitudinal surgery on a $(1,1)$--knot is a doubly primitive knot.  See also the Appendix of Saito \cite{saito2008dual}.

\begin{lemma}\label{lem:dp11}
A framed knot is doubly primitive if and only if its framed surgery dual is a $(1,1)$--knot. \qed
\end{lemma}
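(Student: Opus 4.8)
The plan is to prove both directions of the equivalence by translating ``doubly primitive'' and ``$(1,1)$-knot'' into statements about handle decompositions that match up under Dehn surgery. Throughout, write $K$ for the framed knot, $\nbhd(K)$ for its solid torus neighborhood with meridian $\mu$ and chosen framing $\lambda$, and $K'$ for the surgery dual, which sits in $M(\lambda)$ as the core of the filling solid torus, with its own natural meridian $\mu'$ (isotopic to $\lambda$ on $\bdry\nbhd(K)=\bdry\nbhd(K')$) and framing $\lambda'$ given by $\mu$.

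\emph{Doubly primitive $\Rightarrow$ $(1,1)$.} Suppose $K$ lies on a genus $2$ Heegaard surface $\Sigma = \bdry H_\alpha = \bdry H_\beta$ with compressing disks $D_\alpha \subset H_\alpha$ and $D_\beta \subset H_\beta$ each meeting $K$ transversally once, and the surface framing of $\Sigma$ on $K$ is $\lambda$. Compress $H_\alpha$ along $D_\alpha$: since $\bdry D_\alpha$ meets $K$ once, a neighborhood of $K \cup D_\alpha$ in $H_\alpha$ is a $3$-ball $B_\alpha$ meeting $\Sigma$ in a disk, and $H_\alpha$ cut along $D_\alpha$ is a solid torus $V_\alpha$; the arc of $K$ inside $V_\alpha$ is boundary-parallel (it runs once over the former $1$-handle and can be pushed to $\bdry V_\alpha$). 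Doing the same on the $\beta$ side and tracking how $\Sigma$ meets $\bdry \nbhd(K)$, one sees that $\Sigma$ becomes a genus $1$ surface after removing $\nbhd(K)$, i.e.\ $M(\lambda) = V_\alpha' \cup_{\Sigma'} V_\beta'$ is a genus $1$ splitting in which $K'$ is the union of one boundary-parallel arc in each side. The one point that needs care is that the surface-framing hypothesis is exactly what makes $\Sigma' \subset M(\lambda)$ after the filling, rather than $\Sigma'$ in some other filling of $\bdry \nbhd(K)$; the surface slope on $\bdry \nbhd(K)$ equals $\lambda$, so filling along $\lambda$ caps $\Sigma'$ off to a closed genus $1$ Heegaard surface of $M(\lambda)$ containing $K'$ with a $(1,1)$-presentation.

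\emph{$(1,1)$ $\Rightarrow$ doubly primitive.} Conversely, suppose $K' \subset M(\lambda)$ has a $(1,1)$-presentation with respect to a genus $1$ Heegaard splitting $M(\lambda) = V_\alpha \cup_T V_\beta$: one boundary-parallel arc $t_\alpha \subset V_\alpha$ and one $t_\beta \subset V_\beta$. Remove $\nbhd(K')$ and reglue a solid torus along $\mu'$ — i.e.\ perform the framed surgery dual to the dual, recovering $M$ with $K$ its core. Tubing $T$ along an arc of $K'$ (equivalently, adding back a $1$-handle neighborhood of $t_\alpha$ to $V_\alpha \setminus \nbhd(K')$, and likewise on the $\beta$ side) produces a genus $2$ surface $\Sigma$ in $M$ containing $K$; the cocore disks of the two tubes are compressing disks $D_\alpha, D_\beta$ on the two sides, each meeting $K$ once because $t_\alpha, t_\beta$ are single arcs. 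Finally one checks the framing bookkeeping: the surface slope of $\Sigma$ on $K$ is precisely $\lambda$, because $\Sigma$ was built from $T$ by tubing along $K'$ and the meridian $\mu'$ of $K'$ becomes the framing $\lambda$ of $K$ under the surgery. Hence $K$ is doubly primitive with framing $\lambda$, and surgery along $\lambda$ returns the lens space $M(\lambda)$, consistent with the last sentence of the definition.

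The main obstacle is the framing bookkeeping tying the two directions together precisely — verifying that ``surface slope on $\Sigma$'' corresponds exactly to the distinguished framing $\lambda$ under the meridian/longitude swap $(\mu,\lambda) \leftrightarrow (\lambda',\mu')$ on the common torus $\bdry\nbhd(K)=\bdry\nbhd(K')$. The topological moves (compress/tube along a once-intersecting disk, and its reverse) are standard and essentially force the genus and bridge counts; what takes attention is ensuring the construction and its inverse genuinely invert one another at the level of \emph{framed} knots, which is why the statement is phrased for framed knots and framed surgery. I would organize the write-up around a single picture of $\bdry\nbhd(K)$ carrying both parametrizations and verify the slope correspondence there once, using it in both implications.
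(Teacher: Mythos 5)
First, a note on the benchmark: the paper does not actually prove Lemma~\ref{lem:dp11} --- it is stated as immediate, with the surrounding text deferring to Berge, to the Appendix of Saito \cite{saito2008dual}, and to the tangle reformulation (monotonic arcs versus doubly primitive arcs) that is said to make the lemma ``transparent.'' So you are supplying an argument where the paper supplies none. Your forward direction is the standard one and is essentially correct modulo routine care: filling along the surface slope attaches a surface-framed $2$--handle to each of $H_\alpha$, $H_\beta$ along the primitive curve $K$; each $2$--handle cancels the $1$--handle dual to $D_\alpha$ (resp.\ $D_\beta$); the capped-off surface $(\Sigma \setminus A) \cup D_1 \cup D_2$ (where $A$ is the annulus $\Sigma \cap \nbhd(K)$ and $D_1, D_2$ are meridian disks of the filling torus) is a genus $1$ Heegaard surface of $M(\lambda)$; and $K'$ is cut by $D_1 \cup D_2$ into the two $2$--handle cores, each boundary-parallel. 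The surface-framing hypothesis enters exactly where you say it does.

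The converse as written has a genuine gap, and it sits precisely at the point you yourself flag as ``the main obstacle.'' Two problems. (i) The construction is misstated: tubing $T$ along one arc of $K'$ produces one tube and hence one cocore disk, on one side only; and ``adding back a $1$--handle neighborhood of $t_\alpha$ \dots and likewise on the $\beta$ side'' is incoherent after the regluing (the arcs $t_\alpha, t_\beta$ do not live in $M$, and in $M(\lambda)$ this operation just reconstitutes $V_\alpha \cup_T V_\beta$). The primitivizing disk on the second side cannot be ``the cocore of the second tube,'' and the natural candidates there --- a meridian disk of $V_\alpha$, or the pushed-in shadow of a bridge disk --- meet the isotoped copy of the knot in an uncontrolled number of points, so ``each meeting $K$ once because $t_\alpha, t_\beta$ are single arcs'' is not an argument. (ii) More seriously, the surface slope of an intrinsically tubed surface on $K'$ is some longitude determined by the bridge presentation, with no visible relation to the distinguished slope $\mu$ whose filling returns $M$; if it were some other longitude $\sigma$ you would only learn that the dual of $\sigma$--surgery, a knot in $M(\lambda)(\sigma) \neq M$, is doubly primitive. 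The repair is the statement the paper records in prose just before the lemma: for \emph{every} longitudinal slope $\sigma$ on a $(1,1)$--knot $K'$, the surgery dual is doubly primitive with the inherited framing. One builds the genus $2$ splitting of the surgered manifold from the two genus $2$ handlebodies $V_\alpha \setminus \nbhd(t_\alpha)$ and $V_\beta \setminus \nbhd(t_\beta)$ together with a meridian disk $D$ of the $\sigma$--filling solid torus $W$; the hypothesis $\Delta(\sigma,\mu')=1$ is exactly what guarantees that $D$ meets each of the two annuli $\bdry W \cap V_\alpha$ and $\bdry W \cap V_\beta$ in a single spanning arc, and it is this disk (together with a bridge disk on the other side), not a second tube, that supplies the second compression and pins the surface slope of the dual to $\mu'$. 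Applying this with $\sigma = \mu$ gives the converse with the correct framing; I recommend writing it that way, or following Saito's appendix, rather than asking the intrinsic tubing of $T$ to carry the framing on its own.
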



Since $(1,1)$--knots and doubly primitive knots have tunnel number one (i.e.\ their exteriors may be obtained by attaching a $2$--handle to a genus $2$ handlebody) they are strongly invertible.  Recall that strongly invertible knots may be viewed in the ``tangle setting'' as the lift of an embedded arc $a$ in the double branched cover of a link $L \subset S^3$ where $a \cap L = \bdry a$.  Indeed the exterior of the knot is the double branched cover of the tangle $(S^3 - \nbhd(a), L-\nbhd(a))$.  Because we largely work in this tangle setting, we find convenient to translate the notions of genus one $1$--bridge and doubly primitive.  This also makes Lemma~\ref{lem:dp11} rather transparent.

Say an embedded arc $a$ is {\em on} a link $L$ if $a \cap L = \bdry a$.
Then an arc $a$ on $L$ is {\em doubly primitive} if $L$ may be presented as a three-bridge link with bridge sphere $S$, $a \subset S$, and to each side of $S$ there is a bridge disk for $L$ that intersects $a$ in a single endpoint.
An arc $a$ on $L$ is {\em monotonic}  (or a $(1,1)$--arc) if $L$ may be presented as a two-bridge link with bridge sphere $S$, $|a \cap S|=1$ and to each side of $S$ there is a bridge disk for $L$ that contains a component of $a-S$.  This property is sometimes more easily verified by showing there is a height function $h \colon S^3 \to [-\infty, +\infty]$ such that $h|L$ has only two maxima and two minima and the extrema of $h|a$ occur at $\bdry a$.

\begin{theorem}\label{thm:ABdp11}
The knots that are the cores of the lens space fillings of the manifolds in families $\A$ and $\B$ are all $(1,1)$--knots and all doubly primitive knots with two doubly primitive framings.  
\end{theorem}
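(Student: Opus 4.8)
The plan is to argue entirely in the tangle setting. Recall that the manifolds in $\A$ and $\B$ are the double branched covers of the tangles $A'_{m,n}$ and $B'_{p/q}$, and that each of these tangles carries three rational tangle fillings producing two--bridge links, as displayed in the last three columns of Figures~\ref{fig:FamilyA} and~\ref{fig:FamilyB}. The starting point is the dictionary between a lens space filling and the core of its filling solid torus: if attaching a rational tangle $R$ to a tangle $T$ produces a link $L\subset S^3$, then on double branched covers this is a Dehn filling $M\to M(r)$ of the double branched cover $M$ of $T$, and the core $K\subset M(r)$ of that filling is the lift of the embedded arc $a$ on $L$ for which $(S^3-\nbhd(a),\, L-\nbhd(a))$ recovers the tangle $T$; concretely $a$ may be taken to be an unknotted arc inside the ball of $R$ that spans its two strands. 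Thus the three core knots attached to the lens space fillings $\infty$, $1$, $2$ of any member of $\A$ (resp.\ $\B$) are the lifts of three explicit arcs that one reads off from the three rational tangle fillings of $A'_{m,n}$ (resp.\ $B'_{p/q}$), and the theorem amounts to two assertions about these arcs: that each is monotonic on its two--bridge link, and that each of the resulting core knots is moreover doubly primitive with two doubly primitive framings.

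The main step is to establish monotonicity. By the criterion recalled just before the statement, for each of the three fillings it is enough to isotope the two--bridge link $L$ so that some height function restricts to $L$ with only two maxima and two minima while its restriction to $a$ has extrema only at $\bdry a$; the lift of $a$ --- the core knot --- is then a $(1,1)$--knot. I would carry this out diagrammatically, placing $L$ in a plat position together with $a$ and isotoping $a$ into monotone position relative to the height. Because the members of $\A$ are indexed by the integers $m,n$ and those of $\B$ by $p/q\in\hatQ$, these diagrams occur in families in which the parameters enter only through twist regions that lie away from a neighborhood of the arc $a$; such twist regions can be drawn vertically, hence traversed monotonically, and do not disturb the plat structure of $L$, so it suffices to treat one representative diagram in each of the finitely many combinatorial types and then reinstate the twist regions. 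The bookkeeping involved --- pinning down the correct core arc in each case and, filling by filling, choosing a plat presentation of $L$ adapted to it (note that the naive plat position in which $T$ and $R$ occupy separate balls does \emph{not} make $a$ monotonic, so some genuine rearrangement of the diagram is required) --- is where essentially all of the work lies, and it is the step I expect to be the main obstacle.

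Granting that the three core knots are all $(1,1)$--knots, the doubly primitive statement with two framings is formal, via Lemma~\ref{lem:dp11} and the mutual surgery duality of the three fillings. Fix a member $M$ with its lens space fillings along $\infty$, $1$, $2$ and let $K_\infty$, $K_1$, $K_2$ be the corresponding core knots. For distinct slopes $r,r'$ among $\infty$, $1$, $2$, the filling of $M$ along $r'$ is $M(r')$ with core $K_{r'}$; since $M$ is also $M(r)$ with a neighborhood of $K_r$ removed, this displays $K_{r'}$, equipped with the framing inherited from the meridian of $K_r$, as the framed surgery dual of the framed knot $(K_r,r')$ --- and $r'$ is a legitimate framing of $K_r$ since $\Delta(r,r')=1$ for any two of $\infty$, $1$, $2$. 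As $K_{r'}$ is a $(1,1)$--knot by the previous step, Lemma~\ref{lem:dp11} gives that $(K_r,r')$ is doubly primitive. Letting $r'$ run over the two slopes different from $r$ then yields two distinct doubly primitive framings of $K_r$, distinct because $\infty$, $1$, $2$ are, and these two framings together with the meridian $r$ account for the three lens space fillings of $M$ (so for the hyperbolic members the Cyclic Surgery Theorem shows there are no others). Hence every core knot is a $(1,1)$--knot and a doubly primitive knot with two doubly primitive framings. One could instead verify doubly primitivity directly from a three--bridge presentation and then recover the $(1,1)$--property from Lemma~\ref{lem:dp11}, but the two--bridge diagrams already appearing in Figures~\ref{fig:FamilyA} and~\ref{fig:FamilyB} make the monotonic route the more economical one.
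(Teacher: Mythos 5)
Your proposal is correct and follows essentially the same route as the paper: read the three core arcs off the two-bridge fillings of $A'_{m,n}$ and $B'_{p/q}$ in Figures~\ref{fig:FamilyA} and~\ref{fig:FamilyB}, verify they are monotonic so the core knots are $(1,1)$--knots, and then deduce the two doubly primitive framings from Lemma~\ref{lem:dp11} together with the mutual distance--$1$ surgery duality of the three fillings. The paper simply delegates the monotonicity check to the figures, whereas you spell out the diagrammatic verification; otherwise the arguments coincide.
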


\begin{proof}
Figure~\ref{fig:FamilyA} shows that for each $m,n \in \Z$, the core arcs of the three two-bridge fillings of $A'_{m,n}$ are each monotonic.   Similarly, Figure~\ref{fig:FamilyB} shows that for each $p/q \in \hatQ$, the core arcs of the three two-bridge fillings of $B'_{p/q}$ are each monotonic.  Hence the corresponding knots in the double branched covers are all $(1,1)$--knots. 

Since each of these $(1,1)$--knots are dual to other $(1,1)$--knots by distance $1$ surgeries, Lemma~\ref{lem:dp11} applies to show that they are all doubly primitive as well with a doubly primitive framing for each of the other lens space surgeries.
\end{proof}

For the knots of family $\B$, we could have also appealed to Berge's proof that up to homeomorphism the exterior of the unknotted component of the Berge Link gives the unique hyperbolic knot in the solid torus with two non-trivial surgeries producing the solid torus  \cite{BergeSolidTori}.  Moreover, this knot is a $(1,1)$--knot in the solid torus.  The knots of family $\B$ may be obtained from the Berge Link by the various surgeries on the unknotted component.

In fact Gabai showed that any knot in the solid torus with a non-trivial solid torus surgery is a $(1,1)$--knot, and either the surgery is longitudinal or the knot is a torus knot \cite{GabaiSolidTori}.  Berge classified these knots \cite{BergeSolidTori}.  Collectively these knots are the {\em Berge-Gabai knots in a solid torus}.  Let us then say a knot in a lens space is a {\em Berge-Gabai knot} if some Heegaard torus for the lens space bounds a solid torus that contains the knot as a Berge-Gabai knot in a solid torus.  See also \cite{BBCW}.

\begin{theorem}\label{thm:bergegabaiknots}
The Berge-Gabai knots in lens spaces are all doubly primitive knots.  Every framed lens space surgery on a Berge-Gabai knots is given by a doubly primitive framing.
\end{theorem}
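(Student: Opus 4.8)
The plan is to reduce everything to Lemma~\ref{lem:dp11}: for a framed knot $(K,\gamma)$ with $L(\gamma)$ a lens space, the framing $\gamma$ is doubly primitive exactly when the framed surgery dual $K^\ast \subset L(\gamma)$ is a $(1,1)$--knot. So, given a Berge-Gabai knot $K$ in a Heegaard solid torus $V$ of a lens space $L$, with complementary solid torus $W$, and a framing $\gamma$ with $L(\gamma)$ a lens space, the task is to exhibit $K^\ast$ as a genus one $1$--bridge knot; producing this for at least one lens space framing also gives the first assertion of the theorem, since a doubly primitive framing witnesses its knot's being doubly primitive.

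First I would set up the structural inputs. By Gabai's analysis of knots in solid tori \cite{GabaiSolidTori} (as recalled above), $K$ is a $(1,1)$--knot in $L$, and either $K$ is a torus knot in $V$ or its nontrivial solid--torus surgery slope $\gamma_0$ is a framing. If $K$ is a torus knot in $V$, then it lies on a Heegaard torus of $L$ and so is a torus knot in $L$; its exterior is Seifert fibered, its integral lens space fillings are the classical ones, and the surgery dual of each is again a torus knot, hence a $(1,1)$--knot -- so the theorem holds for torus knots, and in the degenerate case for the trivial knot, and I may assume $K$ has a distinguished solid--torus framing $\gamma_0$. Also, since $\nbhd(K) \subset V$, every surgery on $K$ is supported in $V$: writing $V(\gamma)$ for the result of filling $V \setminus \nbhd(K)$ along $\gamma$, we have $L(\gamma) = V(\gamma) \cup_T W$ with $K^\ast \subset V(\gamma)$, where $T = \partial V$.

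For the framing $\gamma_0$ this yields the conclusion at once. The manifold $V' = V(\gamma_0)$ is a solid torus, so $T$ remains a Heegaard torus of the lens space $L(\gamma_0) = V' \cup_T W$, and the meridian $\mu_K$ of $K$ is a slope on $\partial\nbhd(K^\ast)$ along which surgery on $K^\ast$ recovers $V$; hence $K^\ast$ is itself a Berge-Gabai knot in the solid torus $V'$, so by Gabai's theorem again $K^\ast$ is a $(1,1)$--knot in $L(\gamma_0)$. Lemma~\ref{lem:dp11} then shows $\gamma_0$ is a doubly primitive framing, and so $K$ is doubly primitive; if $K$ happens to carry a second nontrivial solid--torus surgery, the identical argument applies to it.

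The remaining point -- and the one I expect to be the real obstacle -- is to show that a Berge-Gabai knot admits no lens space surgery beyond the trivial filling and its solid--torus surgeries, so that ``every framed lens space surgery is a doubly primitive framing'' is not defeated by some exotic surgery. Here I would argue, for any other framing $\gamma$, that $V(\gamma)$ is not a solid torus (Gabai's uniqueness of the solid--torus surgery), is not toroidal (an essential torus in $V(\gamma)$ would persist in the atoroidal lens space $L(\gamma)$), and cannot be Seifert fibered over the disk or M\"obius band with exceptional fibers without forcing the exterior $E = V \setminus \nbhd(K)$ -- hyperbolic except for finitely many $1$--bridge braids -- to itself be Seifert fibered or toroidal; thus $V(\gamma)$ would have to be hyperbolic, and I would then combine the Cyclic Surgery Theorem \cite{CGLS} with Gabai's and Berge's explicit classification of the knots in $D^2 \times S^1$ with solid--torus surgeries \cite{GabaiSolidTori,BergeSolidTori} to exclude such a $\gamma$, the finitely many non-hyperbolic Berge-Gabai knots being cable-- or torus--knot--like and handled by direct Seifert fibered bookkeeping. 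Everything before this last step is formal given Lemma~\ref{lem:dp11} and Gabai's structural results; the exclusion of exotic lens space surgeries is where the work concentrates.
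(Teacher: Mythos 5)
Your first two steps match the paper: the reduction to Lemma~\ref{lem:dp11}, the use of Gabai's theorem to see that a Berge--Gabai knot is a $(1,1)$--knot whose dual under the solid--torus surgery is again a Berge--Gabai knot (hence $(1,1)$), and the disposal of the torus--knot and trivial cases are exactly how the paper obtains the first assertion and handles the solid--torus--induced framings. The gap is in your last paragraph, which you correctly identify as the real obstacle but then only sketch a plan for, and the plan as stated does not go through. You aim to prove that a Berge--Gabai knot has \emph{no} lens space surgery beyond the trivial filling and its solid--torus surgeries, and to get this by showing $V(\gamma)$ is hyperbolic and then ``excluding such a $\gamma$'' via \cite{CGLS} and the Gabai--Berge classification. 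Two problems: first, the decomposition $L(\gamma)=V(\gamma)\cup_T W$ does not force $V(\gamma)$ to be a solid torus, reducible, Seifert fibered, or toroidal --- $W$ is a solid torus, so $T$ can simply bound $W$ on one side while $V(\gamma)$ is the (possibly hyperbolic) exterior of the core of $W$ in $L(\gamma)$; nothing is contradicted. Second, the Cyclic Surgery Theorem permits up to three lens space fillings of the knot exterior $M=E\cup_T W$ and cannot ``exclude'' a third one, and the Gabai--Berge classification only speaks to surgeries yielding solid tori, not to lens space surgeries of $M$ that fail to restrict to solid--torus surgeries of $V\setminus\nbhd(K)$. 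The paper explicitly flags this: ``A priori, this other surgery need not manifest from a solid torus surgery on the corresponding Berge--Gabai knot in a solid torus.''

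The paper does not exclude such extra surgeries; it absorbs them. In the remaining (hyperbolic) case it invokes the fact from \cite{Baker2008MT5C} that Berge--Gabai knot exteriors are fillings of $M_5$, so a hyperbolic Berge--Gabai knot exterior with two integral lens space surgeries (plus the trivial filling) has three lens space fillings and must, by Theorem~\ref{thm:MT5Ctriples}, be a member of family $\A$ or $\B$; Theorem~\ref{thm:ABdp11} then says every core of every lens space filling of those manifolds is a $(1,1)$--knot, and Lemma~\ref{lem:dp11} finishes. So the decisive input is the global classification of triples of lens space fillings of $M_5$ --- most of the paper --- and not any local analysis of $V(\gamma)$. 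Without something of that strength (or an independent proof that every lens space surgery on a Berge--Gabai knot restricts to a solid--torus surgery, which is not established), your argument does not close.
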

\begin{proof}
Every Berge-Gabai knot has at least one framed lens space surgery for which its surgery dual is also a Berge-Gabai knot.  This is due to its origin as a Berge-Gabai knot in a solid torus. Since the Berge-Gabai knots are all $(1,1)$--knots, Lemma~\ref{lem:dp11} implies that the Berge-Gabai knots are doubly primitive knots and this particular lens space surgery is done along a doubly primitive framing.  If a Berge-Gabai knot has another integral lens space surgery then either it is the trivial knot, its exterior is Seifert fibered and hence it is a torus knot, or it is hyperbolic by \cite{CGLS}.  (A priori, this other surgery need not manifest from a solid torus surgery on the corresponding Berge-Gabai knot in a solid torus.)   In the first two cases, these knots and their lens space surgery duals are all $(1,1)$--knots and so the theorem follows from Lemma~\ref{lem:dp11}.  For the last case, we note that it follows from the work of \cite{Baker2008MT5C} that the exteriors of the Berge-Gabai knots in solid tori may be obtained by three fillings of $M_5$.  Hence the Berge-Gabai knot exteriors are fillings of $M_5$.  Thus if a Berge-Gabai knot in a lens space is hyperbolic and has two integral lens surgeries then its exterior must be homeomorphic to a member of family $\A$ or $\B$ by Theorem~\ref{thm:MT5Ctriples}. Now apply Theorem~\ref{thm:ABdp11}.
\end{proof}

In light of Theorem~\ref{thm:bergegabaiknots}, one may suspect that a knot in a lens space with an integral lens space surgery must be a doubly primitive knot, but this is not so.  (Cf.\ Problem~1.9 \cite{BBCW}.)

\begin{theorem}\label{thm:largebridgenumber}
There are knots in lens spaces of arbitrarily large genus one bridge number that have integral lens space surgeries.
\end{theorem}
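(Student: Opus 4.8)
The plan is to reduce the statement to a question about Heegaard genus and then build the knots by Dehn surgery. The key starting point is the standard fact that a knot $K$ admitting a $(1,b)$--presentation has tunnel number at most $b$; equivalently, its exterior admits a Heegaard splitting of genus at most $b+1$. (An unknotting tunnel system of $b$ arcs is read off from the genus one bridge presentation: take the core of one of the two solid tori together with $b-1$ of the bridges on the opposite side.) Consequently it suffices to produce, for each $n$, a knot $K_n$ in a lens space with an integral lens space surgery whose exterior has Heegaard genus exceeding $n+1$. Rephrasing once more: it is enough to exhibit compact orientable $3$--manifolds $E_n$ with a single torus boundary component such that (i) $E_n$ has two Dehn fillings, along slopes at distance one, each of which is a lens space, and (ii) the Heegaard genus of $E_n$ tends to infinity. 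For then, taking $K_n$ to be the core of one of the two lens space filling solid tori, $K_n$ lies in a lens space, the other filling slope becomes an integral surgery slope producing a lens space, and the genus one bridge number of $K_n$ is at least $g(E_n)-1\to\infty$.

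To build the $E_n$ I would follow the strategy of the rest of the paper and realize them as Dehn fillings of the complement of a link with many components — a chain link generalizing the MT5C — extending the analysis behind Theorems~\ref{thm:nonhyperbolic} and \ref{thm:MT5Ctriples} and \cite{Baker2008MT5C}, which already presents the Berge-Gabai knot exteriors and the families $\A$ and $\B$ as fillings of $M_5$. Concretely, one starts from a multicusped hyperbolic manifold $N$ carrying a distinguished torus boundary component equipped with two lens space filling slopes at distance one (inherited from a known one--cusped example such as a Berge-Gabai knot exterior or a member of $\A$ or $\B$), and then fills the remaining cusps of $N$ along slopes chosen long enough that no essential sphere or torus is created and, crucially, that the Heegaard genus is not forced to drop. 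Choosing the family $N$ so that its own Heegaard genus grows without bound, one would transport that growth to the fillings $E_n$ by invoking the fact that Heegaard genus is preserved under all but finitely many Dehn fillings on each cusp. Finally, after fixing framings, one checks that the two distinguished lens space filling slopes are indeed at distance one, so that the resulting surgery on $K_n$ is integral in the parametrization of the introduction.

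The main obstacle is exactly this Heegaard genus control: one must simultaneously certify a large lower bound for $g(E_n)$ and the persistence of two distance--one lens space fillings after many auxiliary Dehn surgeries. A direct attack using explicit graph manifolds will not suffice, since a genuine graph--manifold knot exterior admits a lens space filling along at most one slope — the fiber slope of the Seifert piece meeting the boundary — so any manifold with two lens space fillings and large Heegaard genus must contain a hyperbolic JSJ piece. It is this forced passage to (essentially) hyperbolic manifolds that makes the genus bookkeeping delicate: one needs a link family whose minimal genus Heegaard splittings have distance large enough to remain stable under the auxiliary fillings, and establishing that is where the real work lies. (Should it turn out instead that the $K_n$ may be taken doubly primitive, their exteriors have Heegaard genus only $2$ and the reduction above becomes vacuous; one would then need a different, presumably Floer--theoretic, lower bound for the genus one bridge number — which is why the exterior--genus route is the natural first attempt.)
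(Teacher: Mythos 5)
There is a genuine gap: your reduction to Heegaard genus of the exterior is a correct inequality but it points you in an unworkable direction, and the construction it requires is never carried out. From a $(1,b)$--presentation one does get tunnel number at most $b$, hence $b \geq g(E)-1$; but this bound is extremely weak, and the content of the theorem is precisely that genus one bridge number is \emph{not} controlled by the Heegaard genus of the exterior. To make your route work you would need manifolds with two distance-one lens space fillings and unbounded Heegaard genus, and no such family is exhibited (the "choose the auxiliary filling slopes long enough" step is the entire difficulty, and it conflicts with all the evidence in this paper: every known irreducible example is $(1,1)$ or doubly primitive on at least one side, hence has exterior of Heegaard genus at most $2$, cf.\ Conjecture~\ref{conj:orders}). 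Your own closing parenthetical concedes the approach may be vacuous --- and it essentially is. A secondary error: it is not true that a non-hyperbolic knot exterior admits a lens space filling along at most one slope; cables of torus knots (cases (3) and (4) of Theorem~\ref{thm:nonhyperbolic}) have two, and Seifert fibered torus knot exteriors have infinitely many, so large Heegaard genus is not forced to come from a hyperbolic JSJ piece.

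The actual proof sidesteps Heegaard genus entirely by using \emph{reducible} exteriors: take $K = U \# T_{p,q}$ in $L(n,1)$ with $|n|\geq 2$, where $U$ is an unknot in $L(n,1)$ and $T_{p,q}$ is a torus knot in $S^3$. Case (5) of Theorem~\ref{thm:nonhyperbolic} supplies the integral lens space surgery on this connected sum, while the genus one bridge number is bounded below by $\min\{|p|,|q|\}-2$ using Doll's subadditivity-type result for $(g,b)$--presentations of connected sums together with Schubert's computation of the bridge number of torus knots. Here the exterior has tunnel number at most $2$ (so $g(E)\leq 3$) even as the genus one bridge number grows without bound --- exactly the phenomenon your reduction cannot detect. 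If you want to salvage your write-up, replace the Heegaard genus lower bound with a bridge-number lower bound that survives connected sum, and build the examples as connected sums rather than as fillings of chain links.
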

\begin{proof}
Let $U$ be an unknot in $L(n,1)$ with $|n|\geq2$ and let $T_{p,q}$ be the $(p,q)$--torus knot in $S^3$.  The genus $1$ bridge number of $U$ is $0$.  The genus $0$ bridge number of $T_{p,q}$ is $\min\{|p|,|q|\}$ by Schubert \cite{Schubert} (see also \cite{SchultensTorusKnot}).   By Doll, the genus one bridge number of the knot $K = U \# T_{p,q}$ in $L(n,1)$ is therefore at least $0 + \min\{|p|,|q|\}-2$.  By Theorem~\ref{thm:nonhyperbolic} below, this knot has an integral lens space surgery.  Clearly we may choose the torus knot $T_{p,q}$ so that the genus one bridge number of $K$ is as large as we wish.
\end{proof}

\begin{cor}\label{cor:notdp}
There are knots in lens spaces with integral lens space surgeries that are not doubly primitive.
\end{cor}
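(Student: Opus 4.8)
The plan is to obtain the statement as an immediate consequence of Theorem~\ref{thm:largebridgenumber}, once one knows that every doubly primitive knot is a $(1,1)$--knot, i.e.\ has genus one bridge number at most $1$. Granting this, Theorem~\ref{thm:largebridgenumber} (taking $T_{p,q}$ with $\min\{|p|,|q|\}\ge 4$ in its proof) supplies a knot $K$ in a lens space that carries an integral lens space surgery and has genus one bridge number at least $2$; such a $K$ is not a $(1,1)$--knot, hence not doubly primitive, which is exactly the assertion. Thus the content is entirely in the parenthetical fact, and the rest is formal.

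To see that a doubly primitive knot is a $(1,1)$--knot I would argue in the tangle picture set up just before Theorem~\ref{thm:ABdp11}. Such a knot is the lift, in a double branched cover, of an arc $a$ on a link $L$ that lies on the bridge sphere $S$ of a three-bridge presentation of $L$ and that, on each side of $S$, meets a bridge disk for $L$ in a single endpoint; that endpoint, lying on $L$ and on the bridge disk, necessarily lies on the strand of $L$ which that bridge disk flattens. Using the two bridge disks to destabilize the three-bridge presentation to a two-bridge one, pushing one component of $a-S$ to each side as one flattens, should exhibit $a$ as a monotonic arc, whence its lift is a $(1,1)$--knot by the reformulation underlying Lemma~\ref{lem:dp11}. (For knots in $S^3$ this is essentially Berge's observation, and the lens space case is the same.)

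The step I expect to be the real obstacle is making that destabilization precise: verifying that flattening each of the two bridges along a bridge disk meeting $a$ only at an endpoint genuinely carries $a$ to an arc $a'$ with $|a'\cap S'|=1$ whose two subarcs lie in bridge disks on the two sides of the new sphere $S'$, and that the moves on the two sides are compatible. Everything else is immediate. If one prefers not to prove the general implication, an alternative is to argue from Lemma~\ref{lem:dp11} directly: were the knot $K=U\# T_{p,q}$ doubly primitive it would carry a lens space surgery whose framed surgery dual is a $(1,1)$--knot, but the description of the lens space fillings of the non-hyperbolic exterior of $K$, and of their cores, coming from the proof of Theorem~\ref{thm:nonhyperbolic} shows each such core again has large genus one bridge number --- a contradiction; this replaces the structural fact by the bookkeeping of Theorem~\ref{thm:nonhyperbolic}.
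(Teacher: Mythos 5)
There is a genuine gap, and it sits exactly where you suspected: the claim that every doubly primitive knot is a $(1,1)$--knot. That statement is not Lemma~\ref{lem:dp11} and does not follow from it --- the lemma says that the framed surgery \emph{dual} of a doubly primitive knot is $(1,1)$, not the knot itself. Your destabilization sketch does not repair this: the two bridge disks in the definition of a doubly primitive arc $a$ on a three-bridge presentation of $L$ are primitivity witnesses (each meets $a$ in a single endpoint), not cancelling disks for a stabilized bridge position, so there is no reason the three-bridge presentation of $L$ destabilizes at all, let alone in a way carrying $a$ to a monotonic arc. What those disks actually exhibit as monotonic is the core arc of the rational tangle replacement along $a$, i.e.\ the dual arc in the surgered link --- which is precisely the content of Lemma~\ref{lem:dp11}. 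Moreover, your lemma runs against the paper's own framework: combined with Lemma~\ref{lem:dp11} it would force \emph{both} knots in a longitudinal surgery-dual pair of lens space knots to be $(1,1)$ whenever one is, which together with Theorem~\ref{thm:largebridgenumber} would contradict Conjecture~\ref{conj:orders}; the asymmetry built into that conjecture is there precisely because the authors expect doubly primitive knots of large genus one bridge number to exist. So your primary route is not merely unproved --- the knot $K=U\# T_{p,q}$ you aim at may well \emph{be} doubly primitive, and the corollary is not about it.

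The intended argument applies Lemma~\ref{lem:dp11} in the other direction, to the surgery \emph{duals} of the knots of Theorem~\ref{thm:largebridgenumber}. Let $K=U\# T_{p,q}\subset L(n,1)$ and let $K'$ be its surgery dual (a cable of a torus knot, by Theorem~\ref{thm:nonhyperbolic}). Then $K'$ has an integral lens space surgery, namely the one returning $K$; by the ``moreover'' clause of Theorem~\ref{thm:nonhyperbolic} this is its only non-trivial lens space surgery, so if $K'$ were doubly primitive with respect to some framing, that framing would have to be this slope, and Lemma~\ref{lem:dp11} would then make $K$ a $(1,1)$--knot, contradicting its large genus one bridge number. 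Your fallback paragraph is close to this in spirit but aims it at $K$ rather than at $K'$, and therefore requires the cores of the other lens space fillings of the exterior of $K$ --- the cables $K'$ --- to have large genus one bridge number. That assertion is nowhere established, does not follow from Theorem~\ref{thm:nonhyperbolic}, and is exactly what Conjecture~\ref{conj:orders} predicts to be false.
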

\begin{proof}
The surgery duals to the knots of Theorem~\ref{thm:largebridgenumber} have integral lens space surgeries but cannot be doubly primitive due to Lemma~\ref{lem:dp11}.
\end{proof}

\begin{remark}
The knots of Corollary~\ref{cor:notdp} are neither hyperbolic nor knots in $S^3$.
One may also obtain the result of Theorem~\ref{thm:largebridgenumber} for hyperbolic knots by using the work of \cite{BGL-integralsurgery}.  This applies to knots among certain families of Berge knots in $S^3$ and the famliy of cores of the $0$--filling in $M_3(0,n,4-n-1/m)$. Consequentially, their surgery duals give examples of knots with integral lens space surgeries but without doubly primitive presentations as in Corollary~\ref{cor:notdp}.  We have been informed that Bowman-Johnson are also able to show this for certain Berge knots in $S^3$ by other means \cite{Bowman-personalcommunication, Johnson-personalcommunication}.
\end{remark}

Nevertheless, all examples of knots in lens spaces with a non-trivial lens space surgery that we know are either doubly primitive or $(1,1)$.

\begin{conj}\label{conj:orders}
If knots $K_1 \subset L(p_1, q_1)$ and $K_2 \subset L(p_2,q_2)$ are longitudinal surgery duals, then up to reindexing $K_2$ is a $(1,1)$--knot and $p_2 \geq p_1$.
\end{conj}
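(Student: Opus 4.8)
The plan is to work in the common exterior $M$ --- a compact orientable $3$--manifold with one torus boundary component, carrying two lens space fillings along slopes $\mu_1,\mu_2$ with $\Delta(\mu_1,\mu_2)=1$, where $M(\mu_i)=L(p_i,q_i)$ and $K_i$ is the core of the $\mu_i$--filling (this is exactly what ``longitudinal surgery duals'' means) --- and then to split into the non-hyperbolic and hyperbolic cases. Since $\Delta(\mu_1,\mu_2)=1$, the curve $\mu_{3-i}$ is a longitude of $K_i$, so $[K_i]=[\mu_{3-i}]$ in $H_1(M(\mu_i))$; writing $o_i$ for the order of $[K_i]$ in $H_1(L(p_i,q_i))$, one computes $p_i/o_i=|H_1(M)/\langle[\mu_1],[\mu_2]\rangle|$, a quantity symmetric in $i$. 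Hence $p_1o_2=p_2o_1$, so ``$p_2\geq p_1$'' is equivalent to ``$o_1\leq o_2$'' --- a useful reformulation, though not obviously easier. Together with Lemma~\ref{lem:dp11}, the conjecture thus reduces to: (A) $K_1$ is a $(1,1)$--knot or its surgery framing is doubly primitive; and (B), after relabelling so that $K_2$ is a $(1,1)$--member (if both are, take the one in the larger lens space), $o_1\leq o_2$ --- which has content precisely when only one of $K_1,K_2$ is a $(1,1)$--knot.

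In the non-hyperbolic case I would invoke Theorem~\ref{thm:nonhyperbolic}, which confines $M$ to finitely many families: Seifert fibered exteriors (of torus knots, and more generally of Seifert fibres of Seifert fibered lens spaces), reducible exteriors (of trivial knots), and the toroidal examples assembled from torus-knot summands and cable spaces, among them the knots of Theorem~\ref{thm:largebridgenumber}. For each family (A) is checked by exhibiting the $(1,1)$--member directly --- torus knots, cores of Heegaard tori, and their distance-one duals are $(1,1)$ by the Berge--Gabai reasoning used for Theorem~\ref{thm:bergegabaiknots} --- and (B) by computing $[K_1]$ and $[K_2]$ from the explicit surgery descriptions (Moser's formulae in the Seifert fibered case, the connect-sum and cabling structure otherwise), noting in particular that the non-$(1,1)$ members that occur have homological order bounded by that of their torus-knot summands. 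This step is long but essentially mechanical once the classification of Theorem~\ref{thm:nonhyperbolic} is in hand.

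The hyperbolic case is where I expect the real difficulty, and I do not see it yielding to the methods here: this is the main obstacle. There is no classification of hyperbolic $M$ admitting two lens space fillings, and obtaining one would in particular prove the Berge Conjecture, since one filling may be $S^3$. Two special sub-cases are within reach of the present results: if $M$ carries a \emph{third} lens space filling and is a filling of $M_5$, then Theorem~\ref{thm:MT5Ctriples} puts $M$ in family $\A$ or $\B$ and Theorem~\ref{thm:ABdp11} shows both cores are $(1,1)$, with the orders read off from the Magic-manifold fillings, so (A) and (B) hold (and Conjecture~\ref{conj:threesurg} would remove the ``filling of $M_5$'' hypothesis); and if one filling is $S^3$ or $S^1\times S^2$, then (A) for the knot there is exactly the assertion that it is doubly primitive --- the Berge Conjecture, or its $S^1\times S^2$ analogue --- toward which Theorem~\ref{thm:triplesofDPknots} and the results it rests on offer partial evidence, and granting it (B) also follows upon relabelling, since the $S^3$ (or $S^1\times S^2$) side has $p\leq 1$. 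Beyond these I have no argument, so I expect a complete proof of the hyperbolic case of (A) to be tantamount to a generalized Berge Conjecture together with a classification of hyperbolic manifolds admitting two lens space fillings. And even granting (A), assertion (B) for a hyperbolic $K_1$ that is doubly primitive but not $(1,1)$ remains a residual obstacle: one needs $o_1\leq o_2$ directly, and although the surgery combinatorics of the $(1,1)$--knot $K_2$ in principle computes both $p_1$ and $p_2$, I know of no general reason for the inequality short of showing such $K_1$ are null-homologous.
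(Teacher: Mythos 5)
The statement you were asked to prove is labelled a \emph{Conjecture} in the paper, and the paper offers no proof of it: it is stated as an open problem motivated by the observation that every known example of a longitudinal lens space surgery pair has a $(1,1)$--member. Your proposal correctly diagnoses this. In particular, you are right that the hyperbolic case of assertion (A) subsumes the Berge Conjecture (take $L(p_1,q_1)=S^3$; then by Lemma~\ref{lem:dp11} the claim that $K_2$ is $(1,1)$ is exactly the claim that $K_1$ is doubly primitive), so no argument assembled from the results of this paper could close it. Your partial reductions are sound and worth recording: the homological identity $p_i/o_i=|H_1(M)/\langle[\mu_1],[\mu_2]\rangle|$ and the consequent equivalence of $p_2\geq p_1$ with $o_2\geq o_1$ is correct (with the usual caveat when one filling is $S^1\times S^2$), and the non-hyperbolic case does reduce to a finite check against Theorem~\ref{thm:nonhyperbolic}, where the duals in cases (4) and (5) of that theorem are precisely the pairs one must examine and where the connected sums of Theorem~\ref{thm:largebridgenumber} show that the ``up to reindexing'' is genuinely needed. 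The triple-filling and $M_5$--filling sub-cases you isolate are likewise handled by Theorems~\ref{thm:MT5Ctriples} and~\ref{thm:ABdp11}.

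The gap, then, is not a flaw in your reasoning but the fact that no proof exists to be compared against: what you have written is a correct reduction plus an accurate assessment of why the remaining hyperbolic case is out of reach, and you say so explicitly. One small point to tighten if you keep the non-hyperbolic analysis: in case (5) of Theorem~\ref{thm:nonhyperbolic} the knot $K=U\# T_{p,q}$ is null-homologous in $L(n,1)$ only when the unknot summand is; in general its order is that of $U$, and the dual knot (the cable of case (4)) is the $(1,1)$--member, so the inequality $o_1\leq o_2$ must be verified against the order of the cable's class rather than assumed from null-homology. This is the kind of mechanical verification you describe, and it does go through, but it is the one place in the non-hyperbolic case where the inequality has real content.
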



\subsection{The doubly primitive knots in $S^3$ and $S^1\times S^2$.}
The doubly primitive knots in $S^3$ and in $S^1 \times S^2$ are known.  For $S^3$, Greene \cite{Greene2013Realization} showed Berge's list \cite{TheBergeResult} was complete.  For $S^1 \times S^2$, Cebanu \cite{Cebanu} has confirmed the list of Baker-Buck-Lecuona \cite{BBLSoneCrossStwo} is complete.  But note that these works do not explicitly address when two doubly primitive knots with different doubly primitive framings are actually isotopic as unframed knots.  Moreover, it is conceivable that a doubly primitive knot in some manifold may have a lens space surgery along an ``alternative'' framing that does not arise from a doubly primitive presentation.

\begin{theorem}\label{thm:triplesofDPknots}
Let $K$ be a doubly primitive knot with framing of slope $r$ in $S^3$ or $S^1 \times S^2$.  If $r\pm1$ surgery on $K$ is a lens space, then $K$ has a presentation as a doubly primitive knot with that framing.  Furthermore:

If $K$ is in $S^3$, then up to homeomorphism $K$ is $\WSL_{\pm1}$ or $\BL_{1/n}$ for $n\in \Z$.

If $K$ is in $S^1 \times S^2$, then up to homeomorphism $K$ is $\WSL_{0}$ or $\BL_{0}$.
\end{theorem}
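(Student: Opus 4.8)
The plan is to reduce everything to the classification of triples of lens space fillings already established in Theorem~\ref{thm:MT5Ctriples}, and then sort out which members of families $\A$ and $\B$ actually carry $S^3$ or $S^1\times S^2$ among their fillings. First I would observe that a doubly primitive knot $K$ with framing $r$ has, by the definition, a lens space filling along $r$; by hypothesis it also has a lens space filling along $r\pm1$; and the ambient manifold $S^3$ or $S^1\times S^2$ is itself a lens space filling, along the meridian $\mu$. Since $\Delta(\mu,r)=1$, $\Delta(\mu,r\pm1)=1$, and $\Delta(r,r\pm1)=1$, these three slopes are pairwise distinct, so the exterior $M=S^3\setminus\nbhd(K)$ (resp.\ $(S^1\times S^2)\setminus\nbhd(K)$) admits three distinct lens space fillings. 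The Cyclic Surgery Theorem then forces $M$ to be Seifert fibered or hyperbolic. In the Seifert fibered case $K$ is a torus knot in $S^3$ or $S^1\times S^2$, and these are visibly doubly primitive with the stated framing (and one checks directly they appear in family $\A$ — indeed $\WSL_p$ has exterior $A_{p+4,2}$, and the torus knots in $S^3$ arise this way); so the substantive case is $M$ hyperbolic.

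For $M$ hyperbolic, I would invoke the machinery behind Theorem~\ref{thm:MT5Ctriples}: a hyperbolic knot exterior with three lens space fillings must be a filling of $M_5$ — here is where I would cite the input that doubly primitive knot exteriors, being obtained by attaching a $2$--handle to a genus two handlebody, embed appropriately, or more directly appeal to \cite{Baker2008MT5C} and \cite{MPR} as in the proof of Theorem~\ref{thm:bergegabaiknots} — and hence by Theorem~\ref{thm:MT5Ctriples} $M$ together with its lens space fillings is homeomorphic to a member of family $\A$ or $\B$. Now Theorem~\ref{thm:ABdp11} tells us the cores of all lens space fillings of members of $\A$ and $\B$ are $(1,1)$--knots that are doubly primitive, with a doubly primitive framing realizing \emph{each} of the other two lens space fillings. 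In particular, when one of the three lens spaces is $S^3$ or $S^1\times S^2$, the knot $K$ is the core of that filling and the framing $r$ (which corresponds to one of the other two lens space fillings, at distance one) is a doubly primitive framing. This proves the first assertion of the theorem.

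The remaining, and I expect most delicate, step is the explicit identification ``up to homeomorphism''. Here I would go back to the parametrizations recorded in the excerpt: $A_{m,n}=M_3(3-\tfrac1m,2-\tfrac1n)$ with fillings $\infty,1,2$, and $B_{p/q}=M_3(5/2,p/q)$ with fillings $\infty,1,2$, together with the identifications $\WSL_p \cong A_{p+4,2}$ and $\BL_{r/s}\cong \B_{(4r+7s)/(r+2s)}$. One must determine, using the homology of the fillings (the order $p$ of $L(p,q)$ is computed by a determinant/linking form calculation from the tangle fillings in Figures~\ref{fig:FamilyA} and \ref{fig:FamilyB}), exactly which members $A_{m,n}$ or $B_{p/q}$ have $S^3$ (i.e.\ $p=1$) or $S^1\times S^2$ (i.e.\ $p=0$) as one of their three fillings, and then which knot $K$ in that $S^3$ or $S^1\times S^2$ results as the core. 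For $S^3$ one expects this to pin $K$ down to $\WSL_{\pm1}$ or $\BL_{1/n}$ — consistent with the already-known fact from the introduction that $\WSL_{\pm1}$ and $\BL_{1/n}$ are precisely the knots in $S^3$ arising this way with three lens space surgeries — and for $S^1\times S^2$ to the single cases $\WSL_0$ and $\BL_0$. The main obstacle is bookkeeping: correctly matching the tangle-filling slopes $\infty,1,2$ to the meridian and the two integral framings, tracking orientations and mirror conventions (the $M_3$-versus-$N$ issue flagged in the excerpt), and verifying that the homeomorphisms of pairs $(M,\text{filling})$ supplied by Theorem~\ref{thm:MT5Ctriples} actually carry the distinguished ``$S^3$ or $S^1\times S^2$'' slope to the corresponding distinguished slope, so that the core knot is identified and not merely its exterior. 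Once the arithmetic of the linking forms is carried out, the enumeration of admissible $(m,n)$ and $p/q$ is short and yields exactly the listed knots.
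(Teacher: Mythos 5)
Your overall strategy --- view the meridian, $r$, and $r\pm1$ as three mutually distance-one lens space slopes and feed the exterior into Theorem~\ref{thm:MT5Ctriples} --- is the right idea for part of the argument, but it rests on a claim that is not available: that a hyperbolic doubly primitive knot exterior with three lens space fillings ``must be a filling of $M_5$.'' Theorem~\ref{thm:MT5Ctriples} is conditional: it classifies the fillings of $M_5$ that have three lens space fillings, and it does not assert that every hyperbolic manifold with three lens space fillings arises as a filling of $M_5$ --- that stronger statement is essentially Conjecture~\ref{conj:threesurg}, which is open. The inputs you point to (\cite{Baker2008MT5C}, \cite{MPR}, the proof of Theorem~\ref{thm:bergegabaiknots}) supply surgery descriptions on the MT5C only for the Berge--Gabai and sporadic families of doubly primitive knots. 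The doubly primitive knots of Berge's families VII and VIII --- the infinite family of knots embedded in the fiber of a trefoil or figure eight knot --- are precisely the ones not known to admit such descriptions, and your proposal leaves them untreated.

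The paper closes exactly this gap with a separate and substantial argument. For $S^3$, Theorem~\ref{thm:altGOFKsurg} (all of Section~\ref{sec:altsurg}) shows that a non-trivial knot in the fiber of a genus one fibered knot with an alternative lens space surgery must be the $(-2,3,7)$--pretzel knot or $\WSL_{-1}$; the proof uses exponent sums of genus one fibered knots, Greene's norm sequences, and Euler characteristic computations for simple knots. For $S^1\times S^2$ the corresponding step is Kadokami--Yamada's Reidemeister torsion result, yielding $\WSL_0$. Only after both branches of the dichotomy (GOFK knots versus knots with $M_5$ surgery descriptions) are shown to land in family $\A$ or $\B$ does the Theorem~\ref{thm:ABdp11} argument --- which you state correctly --- give that both distance-one framings are doubly primitive. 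Your final paragraph on identifying which members of $\A$ and $\B$ have $S^3$ or $S^1\times S^2$ fillings is fine in spirit but cannot rescue the missing case. (A smaller point: the Cyclic Surgery Theorem alone does not force the exterior to be Seifert fibered or hyperbolic; one needs Theorem~\ref{thm:nonhyperbolic} to dispose of the reducible and toroidal possibilities, though that is easily repaired.)
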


\begin{proof}
The works  \cite{Baker2008MT5C, Baker2008GOFK} show that the doubly primitive knots in $S^3$ either (a) embed in the fiber of a genus one fibered knot (a trefoil or the figure eight knot) where the fiber gives the knot a doubly primitive framing or (b) admit a description as a filling of $M_5$.  The same holds for the doubly primitive knots in $S^1 \times S^2$: As described in \cite{BBLSoneCrossStwo}, they either embed in the fiber of a genus one fibered knot in $S^1 \times S^2$,  arise as a Berge-Gabai knot  (and hence admit a description as a filling of $M_5$), or are a ``sporadic'' knot.  The sporadic knots in $S^1 \times S^2$ are a slight modification of the sporadic knots in $S^3$, and it is readily apparent that this modification corresponds to an adjustment of the description as a filling of $M_5$ of the $S^3$--sporadic knots.
Therefore, by Theorem~\ref{thm:MT5Ctriples}, if a knot $K$ in $S^3$ or $S^1 \times S^2$ with a doubly primitive framing of slope $r$ admits a surgery description on the MT5C as well as a lens space surgery along the slope $r+1$ or $r-1$, then $K$ or its mirror belongs to family $\A$ or $\B$.

Let $K$ be a knot embedded in the fiber of a genus one fibered knot in either $S^3$ or $S^1 \times S^2$.     So now assume the fiber frames $K$ with slope $r$ and that $K$ admits another lens space surgery along the slope $r+1$ or $r-1$.  If $K \subset S^3$, then Theorem~\ref{thm:altGOFKsurg} shows that $K$ or its mirror is the knot obtained by either $+1$ or $-1$ surgery on the unknotted component of the Whitehead Sister Link.  If $K \subset S^1 \times S^2$ then Kadokami-Yamada show in Theorem~1.4 \cite{kadokamiyamada2012lens} that $K$ or its mirror is the knot obtained by $0$--surgery on the unknotted component of the Whitehead Sister Link.  Since these knots are surgery on the Whitehead Sister Link, they all belong to family $\A$.

By Theorem~\ref{thm:ABdp11}, the knots of families $\A$ and $\B$ are all $(1,1)$--knots, and thus any longitudinal lens space surgery slope is a doubly primitive framing.  Hence if knot $K$ in $S^3$ or $S^1\times S^2$ has two framings of distance $1$ that give surgeries to lens spaces,  then either both framings are doubly primitive framings for $K$ or neither are.

The remainder of the theorem now follows from the classification of knots in families $\A$ and $\B$.
\end{proof}

\begin{remark}
A key ingredient in the proof above is the determination of when a Dehn twist in the monodromy of a genus one fibered knot in $S^3$ or $S^1 \times S^2$ gives a genus one fibered knot in a lens space.  We do this for the genus one fibered knots in $S^3$ in Theorem~\ref{thm:altGOFKsurg} by determining which lens spaces that may be obtained by surgery on a knot in $S^3$ also contain a genus one fibered knot.  Kadokami-Yamada do this for the genus one fibered knots in $S^1 \times S^2$ by using Reidemeister torsion \cite{kadokamiyamada2012lens}. 
\end{remark}

\begin{question}
When do the monodromies of two genus one fibered knots in lens spaces differ by a single Dehn twist?  The curves along which such Dehn twists occur are knots with two non-trivial lens space surgeries.
\end{question}

\subsection{Non-hyperbolic knots in lens spaces with non-trivial lens space fillings.}

Let us say a knot is an {\em unknot} if its exterior is a solid torus and a {\em trivial knot} if is the boundary of a disk.  Only in $S^3$ are unknots and trivial knots equivalent.  Note that  an unknot in a lens space has a framed surgery that produces $S^3$ only if the lens space is homeomorphic to $L(n,1)$; moreover the framing is unique if $|n|\geq 2$.

For a torus knot, let $\lambda$ be the longitudinal framing given by the the Heegaard torus.  Recall that we include the trivial knots among the torus knots even though their exteriors are in general not Seifert fibered. Furthermore note that the framing $\lambda$ of a trivial knot as a torus knot agrees with the framing given by the disk it bounds.  Also recall that for a choice of orientation of the knot, we orient $\lambda$ as a parallel push-off and  let $\mu$ be a meridian oriented to link the knot positively.   

Lastly, let us note that the connected sum of two oriented framed knots produces a well-defined framed knot.  If one of the summands is reversible, then we may disregard the orientation condition.

\begin{theorem}\label{thm:nonhyperbolic}
Assume $K$ is a non-hyperbolic knot in a lens space with a non-trivial surgery yielding a lens space.  Then $K$ is either 
\begin{enumerate}
\item an unknot and surgery on $K$ along any slope produces a lens space,
\item a torus knot and surgery on $K$ along any slope distance $1$ from $\lambda$ produces a lens space, 
\item a $2\lambda\pm\mu$--cable of a torus knot and  $\pm1$ surgery on $K$ with respect to the framing by the cabling annulus produces a lens space,
\item a $p\lambda+q\mu$--cable of a torus knot on which $p/q$--surgery produces $S^3$ and  surgery on $K$ along the slope of the cabling annulus produces a lens space, or
\item the connected sum of a torus knot $T_{p,q}$ in $S^3$ and an unknot $U$ in $L(n,1)$ and surgery  of $K$ along the framing that arises in the connected sum from the framing $\lambda$ of $T_{p,q}$ and a framing on $U$ that gives $S^3$ by surgery produces a lens space.
\end{enumerate}
Moreover, in the last three cases, if the knot is not also itself a torus knot, then the stated surgery is the only non-trivial lens space surgery.  The last two cases are surgery dual.
\end{theorem}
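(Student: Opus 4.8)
The plan is to analyze $M=L\cut\nbhd(K)$ according to its characteristic decomposition, writing $\mu$ for the meridian of $K$ (so $M(\mu)=L$) and $\gamma\neq\mu$ for a slope with $M(\gamma)=L'$ a lens space. If $M$ is reducible, an essential sphere bounds a ball in the irreducible manifold $L=M(\mu)$, so $K$ lies in that ball and $M\cong E_{S^3}(K_0)\,\#\,L$ for a knot $K_0\subset S^3$ (with $L\neq S^3$, as knot exteriors in $S^3$ are irreducible); primeness of $M(\gamma)=E_{S^3}(K_0)(\gamma)\,\#\,L$ forces $E_{S^3}(K_0)(\gamma)=S^3$, so by the Gordon--Luecke knot complement theorem $K_0$ is unknotted and $\gamma$ has distance one from its Seifert framing, whence $K$ is a trivial disk-bounding knot, counted among the torus knots with $\lambda$ the disk framing: case (2). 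Assume henceforth $M$ is irreducible. If $M$ is $\bdry$-reducible then $M$ is a solid torus, $K$ is an unknot, and every filling of $M$ is a lens space: case (1). If $M$ is $\bdry$-irreducible and atoroidal, then as $M$ is non-hyperbolic it is Seifert fibered; since fibrations over $\RP^2$ or the Möbius band with a cone point have no lens space fillings, $M$ is the exterior of a genuine torus knot in a lens space, its regular fiber slope is the Heegaard-torus framing $\lambda$, the hypothesis $M(\mu)=L$ gives $\Delta(\mu,\lambda)=1$, and $M(\delta)$ is a lens space whenever $\Delta(\delta,\lambda)=1$ (this being exactly the condition that the third exceptional fiber be regular): case (2), with Moser's surgery formulas supplying the description.

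It remains to treat the toroidal case, the heart of the argument; here $M$ is irreducible and $\bdry$-irreducible and contains an essential torus $T$. Since $L=M(\mu)$ and $L'=M(\gamma)$ are atoroidal, $T$ compresses in both fillings. If $T$ is non-separating it persists with non-zero homology class, forcing $L\cong L'\cong S^1\times S^2$, a subcase handled separately and falling within the list. So assume $T$ separates, $M=X\cup_T Y$ with $\bdry M\subset X$ and $\bdry Y=T$. As $T$ is incompressible in $M$ it is incompressible in $Y$, so $Y$ is not a solid torus; therefore in each of $M(\mu)$, $M(\gamma)$ the torus $T$ must compress on the $X$ side, which forces $X(\mu)$ and $X(\gamma)$ both to be solid tori. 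Thus $X$ is the exterior of $K$ in the solid torus $W_\mu:=X(\mu)$, and $\gamma$-surgery carries $W_\mu$ to the solid torus $W_\gamma:=X(\gamma)$: $K$ is a knot in a solid torus with a non-trivial solid-torus surgery. By Gabai's theorem \cite{GabaiSolidTori} quoted above, $K$ is a $(1,1)$-knot in $W_\mu$ that is either a torus knot or has longitudinal surgery slope; with Berge's list \cite{BergeSolidTori}, $X$ is a cable space or the exterior of a Berge--Gabai knot in a solid torus. Meanwhile $Y$ is the exterior in $L$ (and in $L'$) of the core $K_Y$ of $W_\mu$ (and of $W_\gamma$), so $K_Y$ is a knot in a lens space with two lens space surgeries and $Y$ has strictly simpler characteristic decomposition than $M$.

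Applying the theorem inductively to $K_Y$ --- equivalently, invoking Gordon's analysis of surgeries on satellite knots --- and using that $Y$ is not a solid torus, $K_Y$ must be a torus knot: the satellite alternatives for $K_Y$ would make $K$ an iterated cable or a connected sum with a satellite summand, neither of which admits two distinct lens space surgeries. With $K_Y$ a torus knot, $M$ is a cable space glued along $T$ to a torus knot exterior, so $K$ is a cable of a torus knot; or $M$ is the exterior of the connected sum of a torus knot in $S^3$ with the core of a Heegaard solid torus. In the first situation a lens space surgery on the cable $K$ is either along the cabling slope, producing $L_1\,\#\bigl(K_Y\text{ surgered along a slope that yields }S^3\bigr)$ for the lens space $L_1$ determined by the cable, a lens space exactly when the torus-knot summand has an $S^3$-surgery (case (4)), or along a slope of distance one from the cabling slope, producing a filling of $E(K_Y)$; since the winding number $w$ of $K$ in $W_\mu$ multiplies distances by $w^2$ in passing to $T$, requiring the induced slope to be one of the torus knot's lens space slopes is a short Diophantine condition forcing $w=2$, which is the $2\lambda\pm\mu$-cable of case (3). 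The connected-sum situation is case (5); there $U$ is constrained to lie in $L(n,1)$ precisely because the core of a Heegaard solid torus admits an $S^3$-surgery along a \emph{framing} (a slope of distance one from its meridian) only for $L(n,1)$. Tracking slopes through the gluings yields the stated framings.

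Finally, for the uniqueness clause: if in (3)--(5) the knot $K$ is not itself a torus knot, then $M$ is genuinely toroidal, so every lens space filling of $M$ is one along which the essential torus $T$ compresses, and the analysis above leaves only two --- the meridian, giving $L$, and the stated slope, giving $L'$. For the surgery-dual clause, the cabling-slope surgery on the cable of (4) produces $L_1\,\#\,S^3=L_1$, and the core of the filling solid torus is visibly the connected sum of the core of a Heegaard solid torus of $L_1$ with the surgery dual of the torus knot's $S^3$-surgery --- a torus knot in $S^3$ by Moser --- i.e.\ a knot as in (5); running this backwards recovers (4) from (5). The main obstacle is the toroidal case throughout: controlling $X$ when it is a \emph{hyperbolic} Berge--Gabai knot exterior rather than a cable space, making the induction on $Y$ terminate, and carrying the slope bookkeeping through the solid-torus gluings precisely enough to pin down the exact families in (3)--(5) --- in particular the winding-number-$2$ restriction in (3) and the framings in (4) and (5) --- and to establish uniqueness of the non-trivial lens space surgery.
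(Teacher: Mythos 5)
There are several genuine gaps here. The most serious is in the toroidal case: after splitting $M = X \cup_T Y$ with $\partial M \subset X$ and noting that $T$ is incompressible in $Y$, you conclude that $T$ ``must compress on the $X$ side, which forces $X(\mu)$ and $X(\gamma)$ both to be solid tori.'' That inference is false: compressibility of $T$ in $X(\mu)$ only forces $X(\mu)$ to be a solid torus \emph{or} a connected sum $S^1\times D^2\,\#\,W$ with $W$ closed, and the reducible possibility genuinely occurs --- it is precisely how cases (4) and (5) of the theorem arise. The paper allows $Y_K \cong S^1\times D^2\,\#\,Y$, uses Scharlemann to rule out the subcase in which neither side is a solid torus, identifies the cabling-slope surgery via Scharlemann's Corollary 4.4 when the surgered side is reducible, and obtains case (5) as the surgery dual of case (4). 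Your later reintroduction of the connected-sum case contradicts your own dichotomy and is never actually derived. A second error is in the Seifert fibered case: you assert that fibrations over the M\"obius band with a cone point admit no lens space fillings and conclude $K$ must be a torus knot, but the paper shows otherwise --- the regular fiber of the fibration of $L(4n,2n-1)$ over $\RP^2$ with one exceptional fiber yields the curve in the Klein bottle with annular complement in $\pm L(8,3)$, which is not a torus knot and whose $\pm1$ surgery is $\mp L(8,3)$.

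Two further steps do not close. Your identification of $Y$ as a torus knot exterior rests on an induction whose key claim --- that iterated cables and connected sums with satellite summands admit no two distinct lens space surgeries --- is asserted, not proved. The paper avoids this entirely: since the winding number satisfies $w\geq 2$ (Bleiler--Litherland via Gordon and Gabai), the two compression slopes on $T$ have distance greater than $1$, so the Cyclic Surgery Theorem together with the already-settled Seifert fibered case forces $Y$ to be a torus knot exterior directly, with no induction. Finally, you pass from ``$Y$ is a torus knot exterior'' to ``$X$ is a cable space, so $K$ is a cable,'' and then finish case (3) with a winding-number computation; but the toroidal analysis only yields that $K$ is a Berge--Gabai knot in the solid torus $X(\mu)$, and these include hyperbolic $1$-bridge braids that are not cables. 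You flag this obstacle yourself without resolving it. The paper resolves it by regluing $X(\mu)$ (with its framing) into a torus knot exterior in $S^3$, thereby manufacturing a satellite of a torus knot in $S^3$ with a lens space surgery, and then quoting Bleiler--Litherland to force the $2\lambda\pm\mu$--cable and the cabling-annulus surgery slope.
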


\begin{remark}
We present this theorem because
we did not find any complete classification of non-hyperbolic manifolds with two lens space fillings in the literature.  In particular, we have not seen the last two cases of knots with their lens space surgeries mentioned before.  They are of particular interest due to Theorem~\ref{thm:largebridgenumber} and Corollary~\ref{cor:notdp}.
\end{remark}

\begin{remark}
We also note that in the last case of Theorem~\ref{thm:nonhyperbolic}, if $n=\pm1$ then the knot is simply a torus knot in $S^3$ with its integral lens space surgeries.   If $n=0$ then the knot is the unknot in $S^1\times S^2$ with its $S^3$ surgery.
\end{remark}

\begin{proof}
Let $K$ be a non-hyperbolic knot in a lens space $Y$.  Then its exterior $M=Y-\nbhd(K)$ is either reducible, Seifert fibered, or toroidal.

\smallskip
If $M$ is reducible, then  $K$ is contained in a ball in $Y$ and $Y \neq S^3$.  Since lens spaces are prime, if $K$ has a non-trivial surgery yielding a lens space, then $K$ must be the trivial knot by \cite{GordonLuecke1989}.  Every Dehn surgery on $K$ distance $1$ from the slope bounding a disk produces a homeomorphism of $Y$.

\smallskip
If $M$ is Seifert fibered, then either $K$ is a torus knot or a regular fiber of the Seifert fibration $M(-1;(n,1))$ of $Y=L(4n,2n-1)$  over the projective plane with one singular fiber.   If $K$ is the unknot, then every surgery on $K$ is a lens space.  If $K$ is a torus knot other than the unknot (or the trivial knot), then $K$ embeds in the Heegaard torus $T$ of $Y$ and $M\cap T=A$ is an essential annulus.  Then surgery on $K$ along any slopes distance $1$ from $\bdry A$  gives a lens space as apparent from the Seifert fibration of $M$ over the disk with two exceptional fibers, see e.g.\ \cite{DarcySumnersRationalTangleDistances}.  If $K$ is not a torus knot, then $K$ is the curve in the Klein bottle with annular complement in $\pm L(8,3)$ and $\pm1$ surgery on $K$ (with respect to the framing given by the Klein bottle) is $\mp L(8,3)$, \cite[Theorem 6.9]{BakerBuck-RationalSubtangleReplacement-AGT132013}.

\smallskip
If $M$ is toroidal, then there is an essential torus $T$ in $M$ which is either separating or, if $Y \cong S^1 \times S^2$, possibly non-separating. 
If $T$ is a non-separating essential torus in $M \subset Y= S^1 \times S^2$ then $K$ must be null-homologous.  If $K$ has a non-trivial lens space surgery, then since the non-separating torus will persist the resulting lens space must be $S^1 \times S^2$.  Gabai has shown that this implies $K$ must be a trivial knot \cite{gabai-Foliationsandthetopof3mfldsIII}.  Hence we may now assume $T$ separates.

 Let $M=N \cup_T M_K$ and $Y=N \cup_T Y_K$ where $\bdry N = T$ and $\bdry M_K = T \cup \bdry M$.   Furthermore, assume we have chosen $T$ so that $M_K$ is atoroidal.  Since $T$ must compress in $Y_K$,  $Y_K$ is homeomorphic to either $S^1 \times D^2$ or $S^1 \times D^2 \# Y$.   Assume non-trivial surgery on $K$ produces a lens space $Y'$ with dual knot $K'$. In particular, this surgery transforms $Y_K$ into the manifold $Y'_{K'}$ where $Y'=N \cup_T Y'_{K'}$ and $T$ compresses in $Y'_{K'}$.  As before $Y'_{K'}$ is homeomorphic to either $S^1 \times D^2$ or $S^1 \times D^2 \# Y'$.

If neither $Y_K$ nor $Y'_{K'}$ is a solid torus, then $N$ is the exterior of a non-trivial knot in $S^3$.   Since $T$ compresses in each $Y_K$ and $Y'_{K'}$, Scharlemann shows that either $Y_K$ is reducible or $K$ is isotopic into $T$ \cite{scharlemannProducingReducibleManifolds}.  In either case, $M$ is reducible and so $K$ is a trivial knot as shown above.  This contradicts that $M$ is toroidal.

Hence we may assume either $Y_K$ or $Y_K'$ is a solid torus; say $Y_K$ is a solid torus with core curve $C$.  Let $w$ denote the {\em winding number} of $K$ in $Y_K$, the minimum of the absolute values the algebraic intersection numbers of $K$ with the compressing disks of $Y_K$. Following \cite{BleilerLitherland1989} using \cite{GordonSatellite,GabaiSolidTori}, since $\bdry Y'_{K'}$ is compressible we may assume that $w \geq 2$.  Hence the slope of compression $Y'_{K'}$ on $T$ has distance greater than $1$ from the meridian of $Y_K$, and filling $N$ along this slope is either $S^3$ or $Y'$ according to whether $Y'_{K'}$ is reducible or a solid torus.  Then, by \cite{CGLS} and the Seifert fibered case above, it follows that $N$ is the exterior of a torus knot  in either $S^3$ or $Y'$ respectively. 

If $Y'_{K'} \cong S^1 \times D^2 \# Y'$ (with $Y' \not \cong S^3$) then $K$ is a cabled  knot and the surgery is along the slope of the cabling annulus by Corollary~4.4 of  \cite{scharlemannProducingReducibleManifolds}. Recall that $N$ is the exterior of a torus knot in $S^3$.  Hence $C$ is a torus knot in $Y$ with a surgery to $S^3$, $K=J(C)$ is a cable of $C$ along this surgery slope, and $Y'$ is obtained by surgery on $K$ along the cabling slope.  It follows that the surgery dual of this knot is the connected sum of an unknot in $L(n,1)$ and the torus knot in $S^3$ with exterior $N$.

If $Y'_{K'} \cong S^1 \times D^2$, then $K \subset Y_K$ is a Berge-Gabai knot in a solid torus and $K \subset Y$ is a satellite of a torus knot $C$ in $Y$.     Since $C$ is a torus knot and the lens space $Y' = Y'_{K'} \cup N$ may be viewed as surgery on $C$ , it must be that the meridian of $Y'_{K'}$ has distance $1$ from the framing $\lambda$ of $C$.  Since $Y_K$ is the closure of a regular neighborhood of $C$, we may frame it with $\lambda$.  
Observe that if $N'$ is the exterior of a torus knot in $S^3$ with framing $\lambda'$, then we may attach $Y_K$ so that $S^3=N' \cup Y_K$, the framings $\lambda$ and $\lambda'$ agree, and $N' \cup Y'_{K'}$ is a lens space.  In particular, we have used the  Berge-Gabai knot $K \subset Y_K$ to form a satellite of a torus knot in $S^3$ that has a lens space surgery.   Now we may appeal to \cite{BleilerLitherland1989} to conclude that $K$ is the $2\lambda \pm \mu$--cable of $C$ in the solid torus $Y_K$ and the surgery to $Y'_{K'}$ is along the slope of the cabling annulus.
\end{proof}

\subsection{Acknowlegedments}
The authors would like to thank Cameron Gordon,  Adam Lowrance,  Mario Eudave-Mu\~noz, and Fionntan Roukema for conversations about this work.

K.\ Baker thanks Tsuyoshi Kobayashi, Yo'av Rieck, and Nara Women's University for their hospitality during part of the writing of this article.  His work was partially supported by Simons Foundation grant \#209184 to Kenneth L.\ Baker.

N.\ Hoffman would like to thank the Max Planck Institute for Mathematics and Boston College for partially supporting this work.


\section{Notation and Conventions}

\subsection{Tangles}
We use the continued fraction expansion
\[p/q = [a_1, a_2, \dots, a_n] = a_1 -\cfrac{1}{a_2 - \cfrac{1}{a_2 - \cfrac{1}{ \ddots - \cfrac{1}{a_n}}}}\]
for an extended rational number $p/q \in \hat{\Q} = \Q \cup \{\infty\}$.  Typically we restrict to the case that the coefficients $a_i$ are integers, but it is  convenient to permit them to be rational numbers as well.

A {\em rational tangle} is a pair consisting of two arcs properly embedded in a ball such that the arcs are isotopic rel--$\bdry$ into the boundary of the ball.  Any two rational tangles are isotopic through rational tangles.  However, by choosing four fixed points around a circle on the boundary of the ball and restricting to rational tangles whose arcs have their endpoints at these four fixed points, the isotopy classes of these rational tangles are parametrized by $\hat{\Q}$.   Diagrammatically, represent a twist region between two strands of an embedded $1$--manifold by an oblong rectangle labeled with an integer $n$ if there are $|n|$ crossings and the twisting has handedness $\sgn(n)$.  Then we can build the rational tangle $p/q$, pictorially denoted $\tanglepq{p/q}$, from a continued fraction expansion $p/q = [a_1, a_2, \dots, a_n]$ with a sequence of twist regions assembled as depicted in Figure~\ref{fig:tanglelegend}. 
\begin{figure}
\centering
\psfrag{a}[][]{$a$}
\psfrag{b}{$b$}
\psfrag{c}[][]{$c$}
\psfrag{d}{$d$}
\psfrag{3}{$3$}
\psfrag{-13}[][]{$-\frac13$}
\psfrag{abc}[][]{$[a,b,c]$}
\psfrag{f}[][]{$[a,b,c,d]$}
\includegraphics[width=5in]{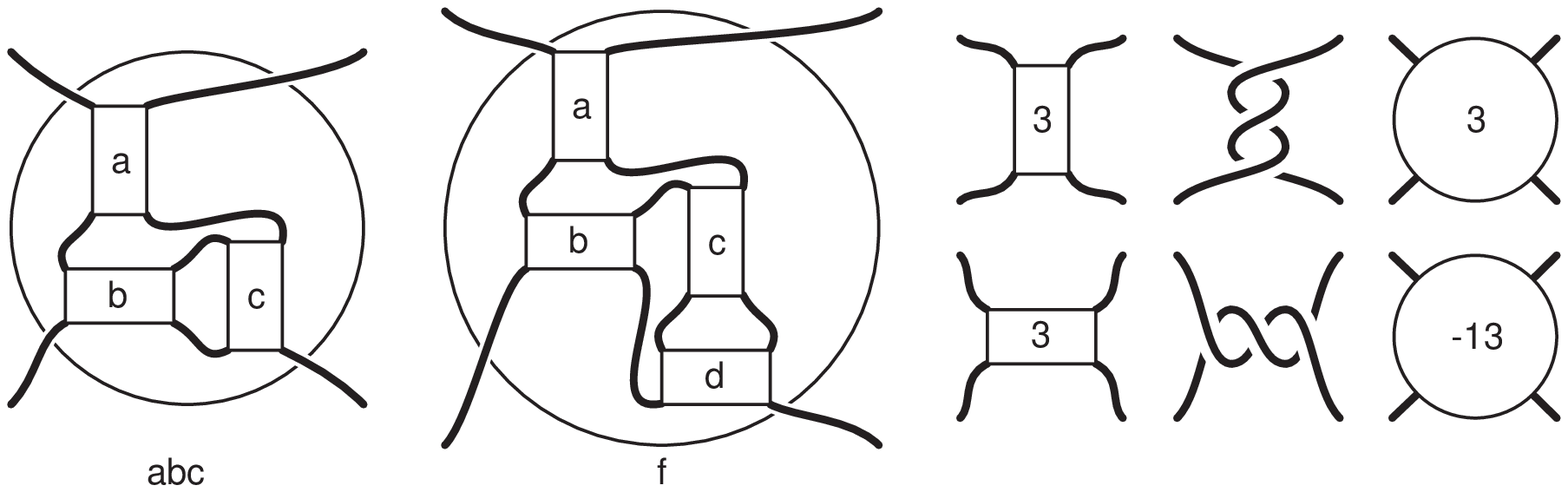}
\caption{\label{fig:tanglelegend}}
\end{figure}

In a sphere with four marked points, a slope is an isotopy class of embedded loops that separate the four points in pairs.  Note that for each rational tangle there is an isotopy class of properly embedded disks in the ball that separate the two arcs; the boundary of such a disk is a slope. Conversely, a slope determines a rational tangle.  A basis for these slopes is given by an ordered pair of two oriented slopes (up to overall reversal) with algebraic intersection number $+2$.   Figure~\ref{fig:tanglebasis} shows the standard choice of basis $\mu, \lambda$.  If the diagram of the tangle is in a different orientation than standard we will show at least the slope $\mu$.  Observe that the double cover of the sphere branched over the four points is a torus; the basis lifts to a basis for the torus, and slopes lift to slopes.  The double branched cover extends across any rational tangle filling the sphere defined by the slope to give a filling of the torus by a solid torus whose meridian is the lift of that slope.  This correspondence, and in particular the correspondence between replacing one rational tangle with another and Dehn surgery on knots, is often referred to as the Montesinos Trick \cite{montesinos}.

\begin{figure}
\centering
\psfrag{mer}[][]{$\mu$}
\psfrag{long}[][]{$\lambda$}
\includegraphics[height=1in]{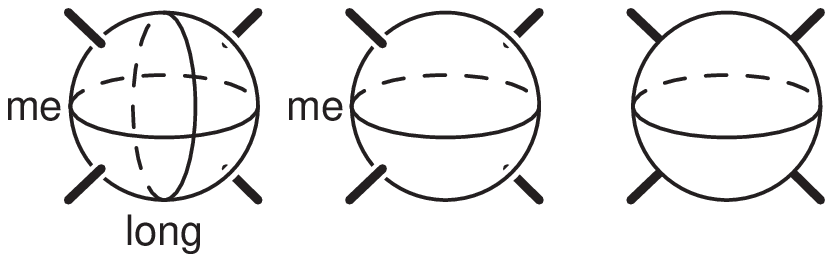}
\caption{}
\label{fig:tanglebasis}
\end{figure}

More generally, a tangle is the pair of $3$--manifold and a $1$--manifold properly embedded in it.  We will restrict attention to tangles in which the $3$--manifold is just $S^3$ minus a finite number of open balls and the $1$--manifold meets each boundary component in exactly four points.  For such a tangle $T$ let $\Sigma(T)$ denote its double branched cover.   We define a tangle $T$ to be {\em hyperbolic} if $\Sigma(T)$ is a hyperbolic manifold; otherwise $T$ is {\em non-hyperbolic}.  Here, a $3$--manifold is hyperbolic if its interior admits a complete hyperbolic metric with finite volume.

\subsection{Factoring fillings}
Given two tangles $A$ and $B$ such that $A \subset B$, we say a filling $\alpha$ of $A$ {\em factors through} $B$ if there exists a filling  $\beta$ of $B$ such that $A(\alpha)$ is orientation preserving homeomorphic to $B(\beta)$.  Similarly, given two $3$--manifolds $M$ and $N$ such that $M \subset N$, we say that a Dehn filling $\alpha$ of $M$ factors through $N$ if there exists a filling $\beta$ of $N$ such that $M(\alpha)$ is orientation preserving homeomorphic to $N(\beta)$.

\begin{figure}
\centering
\psfrag{a1}[B][r][.8]{$\alpha_1$}
\psfrag{a2}[B][r][.8]{$\alpha_2$}
\psfrag{a3}[B][r][.8]{$\alpha_3$}
\psfrag{a4}[B][r][.8]{$\alpha_4$}
\psfrag{a5}[B][r][.8]{$\alpha_5$}
\includegraphics[width=.5\textwidth]{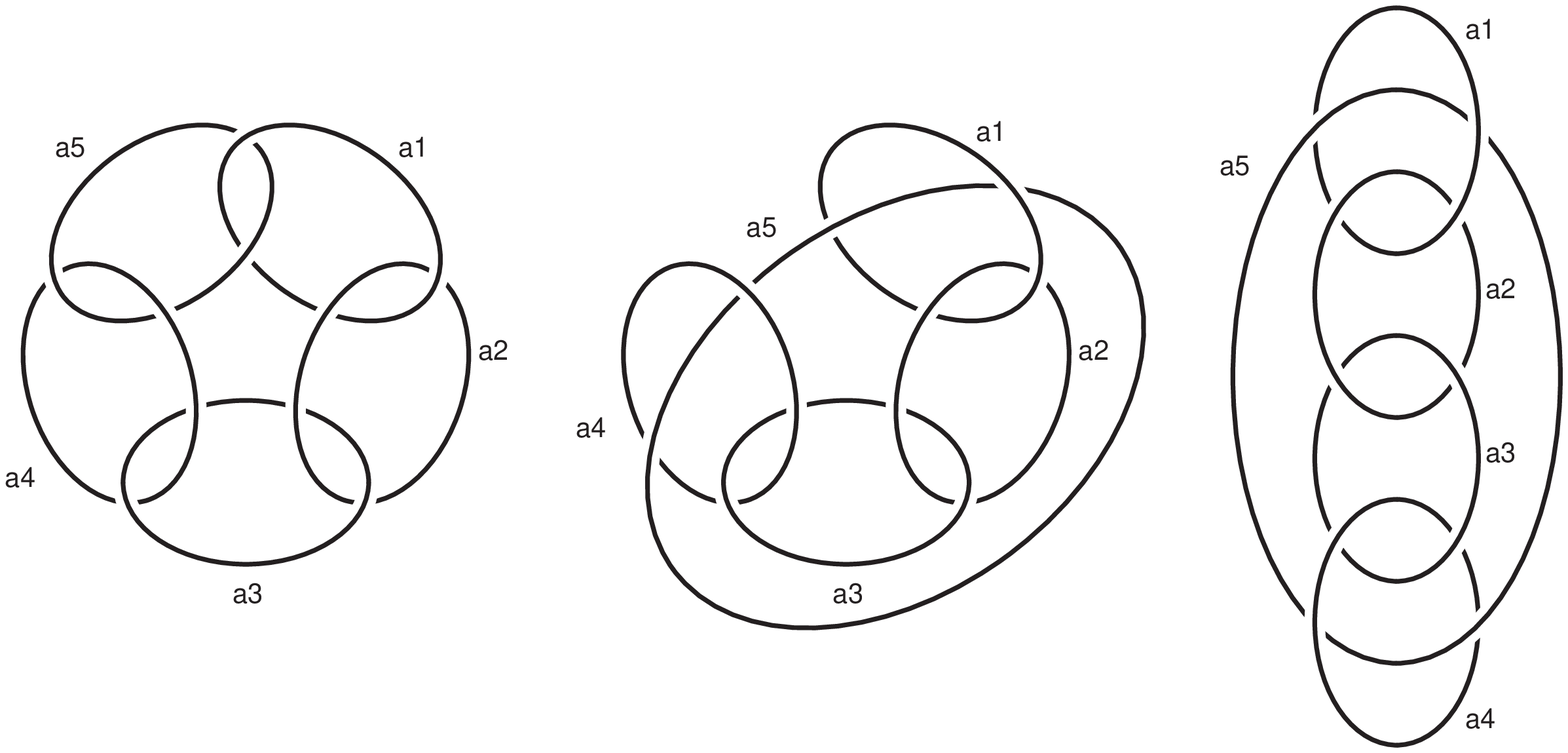}
\caption{}
\label{fig:mt5cisotopy}
\end{figure}

\subsection{MT5C and the Pentangle}
Figure~\ref{fig:mt5cisotopy} shows an isotopy of the Minimally Twisted 5 Chain (MT5C) from its common form to a form admitting a strong involution along a vertical axis.  Figure~\ref{fig:chaintopentangle} shows the quotient of its exterior $M_5$ under this involution and the subsequent isotopy into the form of the Pentangle $P_5$.  (In the first three frames, the top and the bottom of the axis join at the point at $\infty$.)  This quotient and isotopy follows the standard meridian-longitude basis for each component of $\bdry M_5$ coming from its identification as the exterior of the MT5C of Figure~\ref{fig:mt5cisotopy} to the induced basis for the quotient tangle and compares it with the standard basis given by this diagram of $P_5$ in the last frame.  
We will always take $P_5$ with this basis.   Figure~\ref{fig:chainpent} facilitates the translation between the bases of the boundary components of $M_5$ and $P_5$.  In particular, a sequence of Dehn fillings $(\alpha_1, \alpha_2, \alpha_3, \alpha_4, \alpha_5)$ of $M_5$ corresponds to a sequence of rational tangle fillings $(\NW, \NE, \SW, \SE, \X)$ of $P_5$ as follows:
\begin{align}
M_5(\alpha_1, \alpha_2, \alpha_3, \alpha_4, \alpha_5) &= \Sigma (P(\alpha_2,1-\tfrac{1}{\alpha_1},1-\tfrac{1}{\alpha_4},\alpha_3,\alpha_5-1)) \label{eqn:m5top5}\\
M_5(\tfrac{1}{1-\NE},\NW, \SE, \tfrac{1}{1-\SW}, \X+1) &= \Sigma(P(\NW, \NE, \SW, \SE, \X))
\end{align}

\begin{figure}
\centering
\psfrag{m1}[l][l][.8]{$\mu_1$}
\psfrag{m2}[l][l][.8]{$\mu_2$}
\psfrag{m3}[l][l][.8]{$\mu_3$}
\psfrag{m4}[l][l][.8]{$\mu_4$}
\psfrag{m5}[l][l][.8]{$\mu_5$}
\psfrag{l1}[B][][.8]{$\lambda_1$}
\psfrag{l2}[B][][.8]{$\lambda_2$}
\psfrag{l3}[B][][.8]{$\lambda_3$}
\psfrag{l4}[B][][.8]{$\lambda_4$}
\psfrag{l5}[B][][.8]{$\lambda_5$}
\psfrag{pm1}[][r][.8]{$\mu_{\mbox{\tiny \sc ne}}$}
\psfrag{pm2}[][r][.8]{$\mu_{\mbox{\tiny \sc nw}}$}
\psfrag{pm3}[][r][.8]{$\mu_{\mbox{\tiny \sc se}}$}
\psfrag{pm4}[][r][.8]{$\mu_{\mbox{\tiny \sc sw}}$}
\psfrag{pm5}[][r][.8]{$\mu_{\mbox{\tiny \sc x}}$}
\psfrag{pl1}[B][][.8]{$\lambda_{\mbox{\tiny \sc ne}}$}
\psfrag{pl2}[B][][.8]{$\lambda_{\mbox{\tiny \sc nw}}$}
\psfrag{pl3}[B][][.8]{$\lambda_{\mbox{\tiny \sc se}}$}
\psfrag{pl4}[B][][.8]{$\lambda_{\mbox{\tiny \sc sw}}$}
\psfrag{pl5}[B][][.8]{$\lambda_{\mbox{\tiny \sc x}}$}
\includegraphics[width=.9\textwidth]{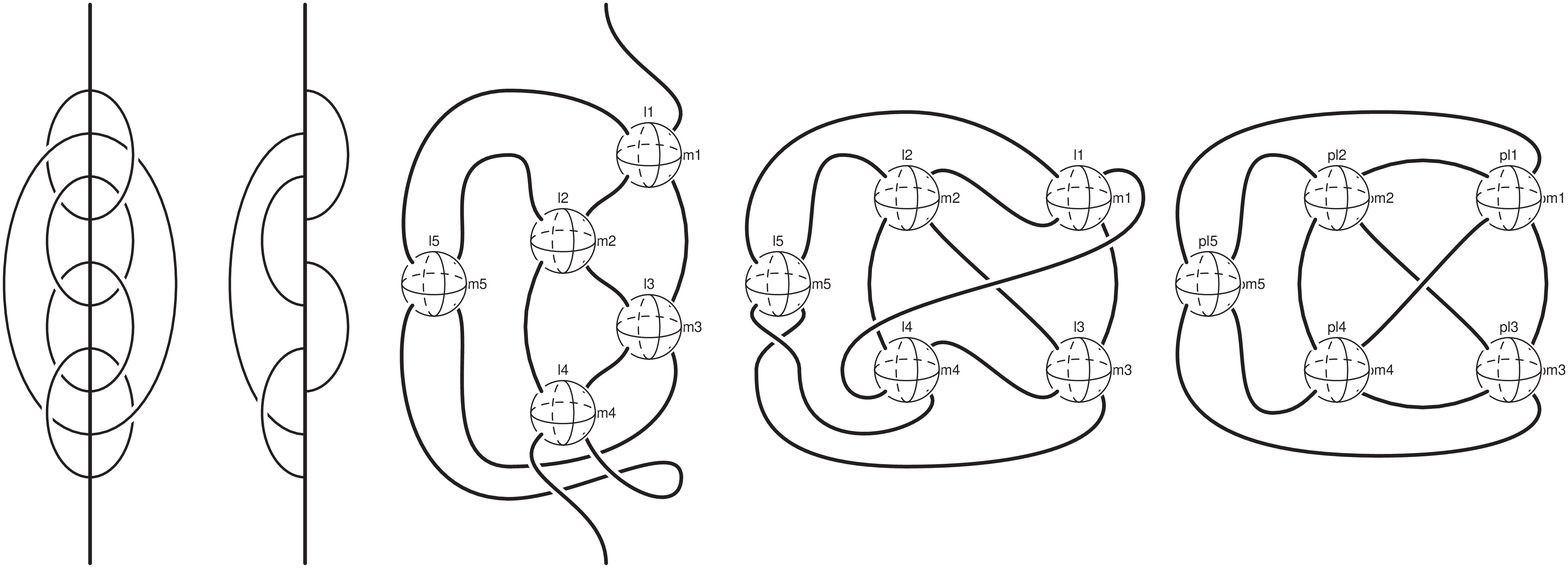}
\caption{}
\label{fig:chaintopentangle}
\end{figure}

\begin{figure}
\centering
\psfrag{a1}[B][r][.8]{$\alpha_1$}
\psfrag{a2}[B][r][.8]{$\alpha_2$}
\psfrag{a3}[B][r][.8]{$\alpha_3$}
\psfrag{a4}[B][r][.8]{$\alpha_4$}
\psfrag{a5}[B][r][.8]{$\alpha_5$}
\psfrag{b1}[][][.7]{$1\!\!-\!\!\tfrac{1}{\alpha_1}$}
\psfrag{b2}[][][.8]{$\alpha_2$}
\psfrag{b3}[][][.7]{$1\!\!-\!\!\tfrac{1}{\alpha_4}$}
\psfrag{b4}[][][.8]{$\alpha_3$}
\psfrag{b5}[][][.7]{$\alpha_5\!\!-\!\!1$}
\includegraphics[width=.5\textwidth]{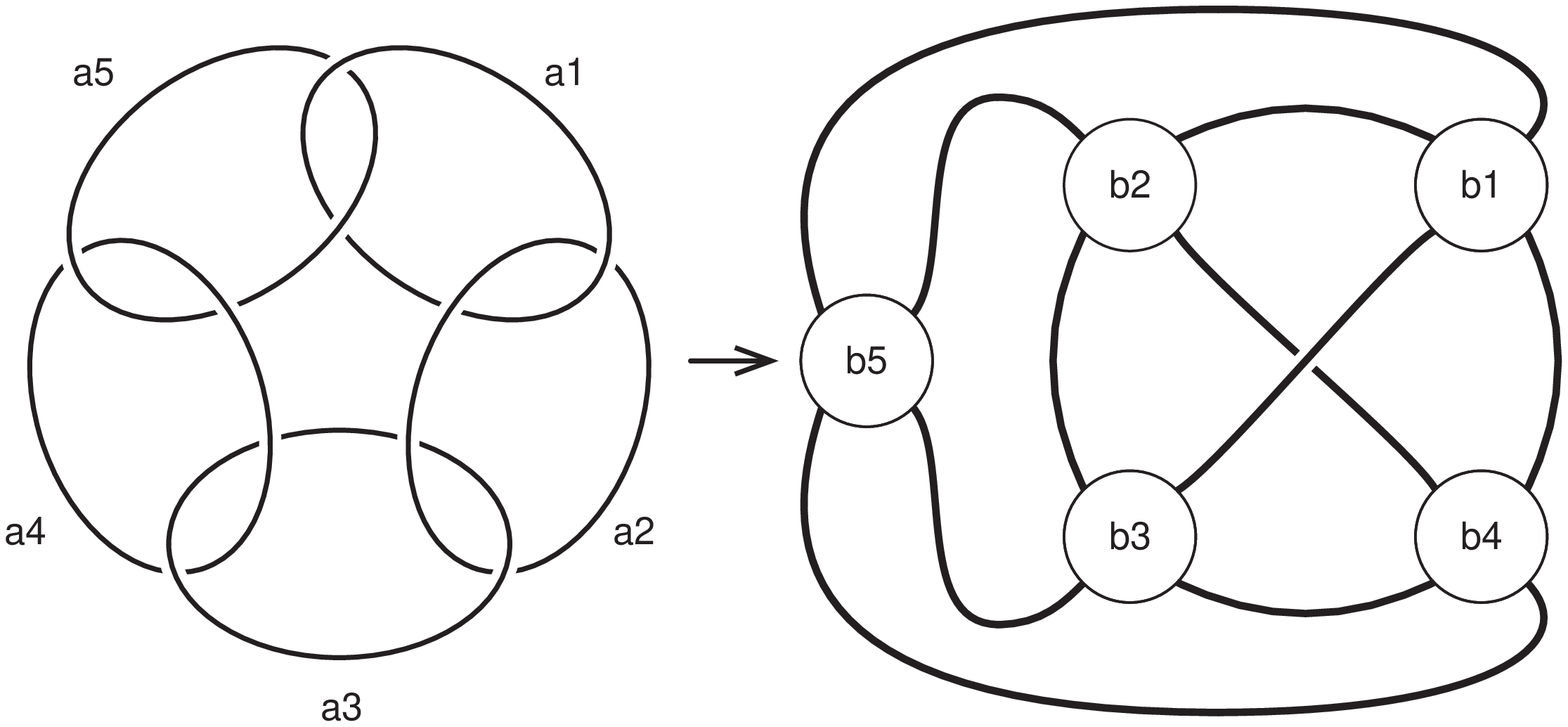}
\caption{}
\label{fig:chainpent}
\end{figure}

The pentangle admits three obvious involutions that preserve the fifth boundary component and its slopes while pairs of the other four:
\begin{align}
(\NW, \NE, \SW, \SE, \X) &\mapsto (\NE, \NW, \SE, \SW, \X)  \label{eqn:rightleft}\\
(\NW, \NE, \SW, \SE, \X) &\mapsto (\SW, \SE, \NW, \NE, \X) \label{eqn:topbot}\\
(\NW, \NE, \SW, \SE, \X) &\mapsto (\SE, \SW, \NE, \NW, \X) \label{eqn:frontback}
\end{align}
There is also an order $3$ symmetry illustrated in Figure~\ref{fig:order3rot} that fixes two boundary components (though altering their slopes) while permuting the other three.  The shaded triangle helps suggest the action of this symmetry, the slopes $(\NW=1, \SW=\infty, \NE=0)$ are cyclically permuted.  This rotation gives the following map:
\begin{align}
(\NW, \NE, \SW, \SE, \X) &\mapsto (\tfrac{1}{1-\NE}  , \tfrac{1}{1-\SW}  ,  \tfrac{1}{1-\NW}, \tfrac{1}{1-\SE},  \tfrac{-1}{1+\X}) \label{eqn:p5order3}
\end{align}

\begin{figure}
\centering
\psfrag{m1}[l][l][.8]{$\mu_1$}
\psfrag{m2}[l][l][.8]{$\mu_2$}
\psfrag{m3}[l][l][.8]{$\mu_3$}
\psfrag{m4}[l][l][.8]{$\mu_4$}
\psfrag{m5}[l][l][.8]{$\mu_5$}
\psfrag{l1}[B][][.8]{$\lambda_1$}
\psfrag{l2}[B][][.8]{$\lambda_2$}
\psfrag{l3}[B][][.8]{$\lambda_3$}
\psfrag{l4}[B][][.8]{$\lambda_4$}
\psfrag{l5}[B][][.8]{$\lambda_5$}
\psfrag{pm1}[][r][.8]{$\mu_{\mbox{\tiny \sc ne}}$}
\psfrag{pm2}[][r][.8]{$\mu_{\mbox{\tiny \sc nw}}$}
\psfrag{pm3}[][r][.8]{$\mu_{\mbox{\tiny \sc se}}$}
\psfrag{pm4}[][r][.8]{$\mu_{\mbox{\tiny \sc sw}}$}
\psfrag{pm5}[][r][.8]{$\mu_{\mbox{\tiny \sc x}}$}
\psfrag{pl1}[B][][.8]{$\lambda_{\mbox{\tiny \sc ne}}$}
\psfrag{pl2}[B][][.8]{$\lambda_{\mbox{\tiny \sc nw}}$}
\psfrag{pl3}[B][][.8]{$\lambda_{\mbox{\tiny \sc se}}$}
\psfrag{pl4}[B][][.8]{$\lambda_{\mbox{\tiny \sc sw}}$}
\psfrag{pl5}[B][][.8]{$\lambda_{\mbox{\tiny \sc x}}$}
\includegraphics[height=1.5in]{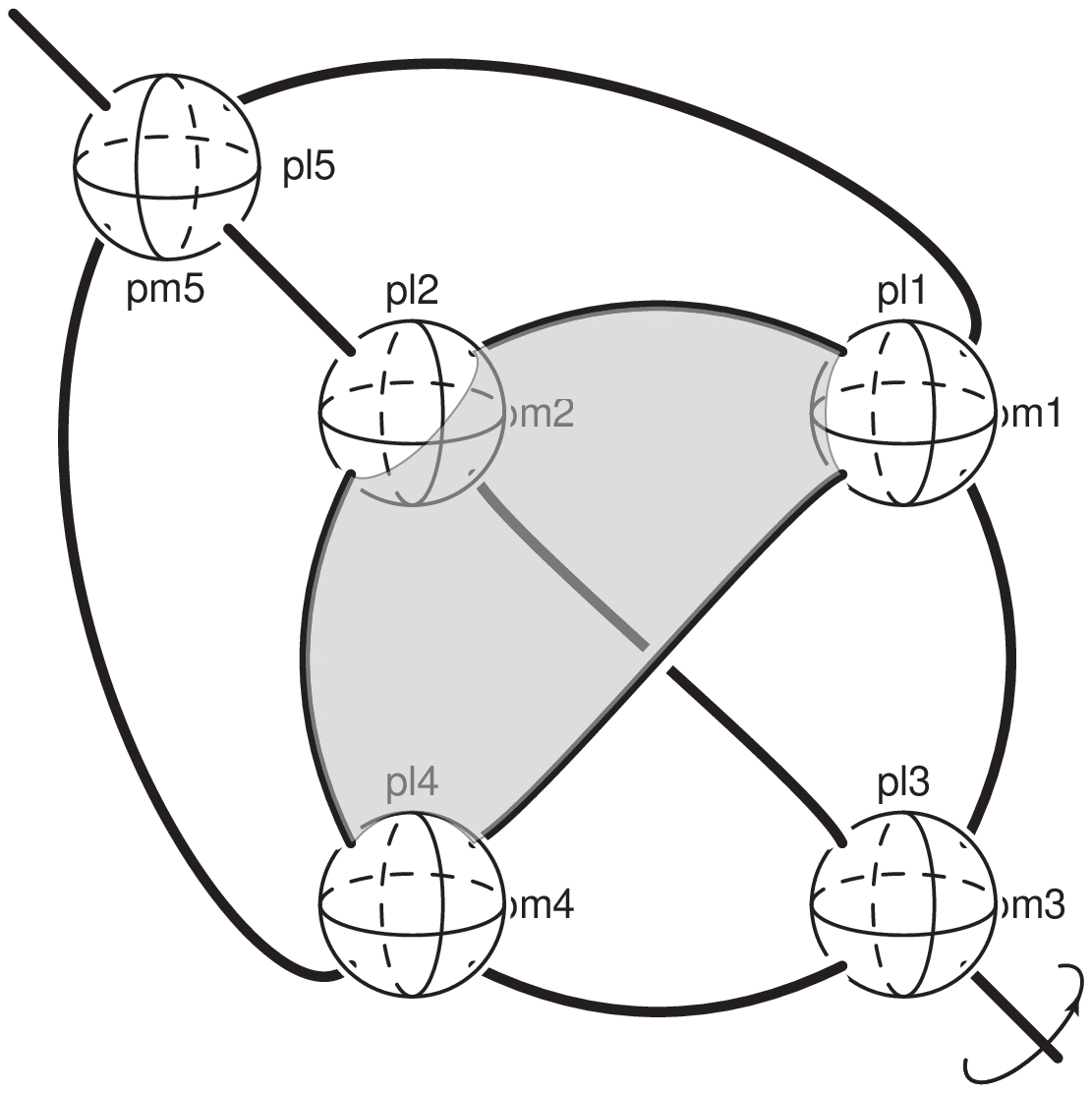}
\label{fig:order3rot}
\end{figure}

Rotating the figure a quarter turn and then taking the mirror provides another symmetry:
\begin{align}
(\NW,\NE,\SW,\SE,\X) & \mapsto (\tfrac{1}{\NE}, \tfrac{1}{\NE}, \tfrac{1}{\SE}, \tfrac{1}{\NW},\tfrac{1}{\SW}, \tfrac{1}{\X}) \label{eqn:mirrorsym}
\end{align}

\section{On lens space fillings of the MT5C}

Let $M_5$ denote the exterior of the MT5C. Let $M_3$ denote the magic manifold, i.e.\ the exterior of the $(2,2,2)$--pretzel link.
These have the quotient tangles $P_5$ and $P_3$.

\begin{lemma}\label{lem:nonhypfilling}
A filling  $\alpha = (\NW,\NE,\SW,\SE)$ of $P_5$ is non-hyperbolic if one of the following holds:
\begin{itemize}
\item $\NW, \NE, \SW,$ or $\SE \in \{0,1,\infty\}$
\item $\{\NW, \NE\}$ or $\{\SW, \SE\}  \in \{ \{-1,2\},\{\tfrac12,\tfrac12\} \}$
\item $\{\NW, \SW\}$ or $\{\NE, \SE\} \in \{ \{-1, \tfrac12\}, \{2,2\} \}$
\item $\{\NW, \SE\}$ or $\{\SW, \NE\} \in \{ \{\tfrac12, 2\}, \{-1,-1\} \}$
\end{itemize}
\end{lemma}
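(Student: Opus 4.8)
The plan is to collapse the long list of hypotheses to three model cases using the symmetries of the pentangle, and then to dispatch each model case by exhibiting, directly in the tangle picture, a degeneration (a connected sum or an essential Conway sphere) that the remaining Dehn fillings cannot undo, so that the double branched cover is reducible, Seifert fibered, or toroidal.

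For the reduction I would first record the action of the symmetry group on slopes. The three involutions \eqref{eqn:rightleft}, \eqref{eqn:topbot}, \eqref{eqn:frontback} generate a Klein four-group that permutes the four slots $\NW,\NE,\SW,\SE$ without changing slopes; it is transitive on these four slots and, within each of the three ``edge types'' (namely $\{\NW,\NE\}$ with $\{\SW,\SE\}$; $\{\NW,\SW\}$ with $\{\NE,\SE\}$; $\{\NW,\SE\}$ with $\{\SW,\NE\}$), transitive on the pair of slot-pairs of that type. The order-three symmetry \eqref{eqn:p5order3} acts on each of $\NW,\NE,\SW,\SE$ by the fractional-linear map $x\mapsto \tfrac1{1-x}$, fixes one slot (in the given normalization, $\SE$) while cyclically permuting the other three, and carries the edge type $\{\NW,\NE\}$ to $\{\NW,\SW\}$ and thence to $\{\NW,\SE\}$. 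Since $x\mapsto\tfrac1{1-x}$ cyclically permutes both $\{0,1,\infty\}$ and $\{-1,\tfrac12,2\}$, one checks that the single-slot hypotheses form a single orbit and the six pair-hypotheses break into exactly two orbits under the group generated by these symmetries, with representatives (i) $\SE=\infty$; (ii) $\{\NW,\NE\}=\{-1,2\}$; (iii) $\{\NW,\NE\}=\{\tfrac12,\tfrac12\}$. It therefore suffices to establish non-hyperbolicity in these three cases, the remaining slopes being arbitrary.

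For case (i), the dictionary \eqref{eqn:m5top5} shows that imposing $\SE=\infty$ amounts to setting $\alpha_3=\infty$, i.e.\ capping off the central component of the MT5C with its meridian, which deletes that component and opens the five--chain (cf.\ Figure~\ref{fig:mt5cisotopy}) into a four--component open chain; equivalently, inserting the trivial $\infty$--tangle into the $\SE$ slot of $P_5$ exhibits $P_5(\NW,\NE,\SW,\infty)$, after an isotopy, as a tangle split along a Conway sphere with a rational tangle on one side. An open chain link is an iterated connected sum of Hopf links, so this filling of $M_5$ is a graph manifold assembled from $T^2\times I$ pieces along essential annuli; Dehn filling the remaining three cusps can only yield another manifold built from solid tori and $T^2\times I$ pieces along annuli, i.e.\ again a graph manifold, which is never hyperbolic. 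Hence $\Sigma\bigl(P_5(\NW,\NE,\SW,\infty)\bigr)$ is non-hyperbolic for all $\NW,\NE,\SW$. For cases (ii) and (iii) the argument is diagrammatic: filling the horizontally adjacent slots $\NW,\NE$ by $\{-1,2\}$ (resp.\ by two copies of $\tfrac12$) and isotoping the pentangle, one should see the resulting tangle degenerate, either as a connected sum or with an essential Conway sphere $S$ separating the filled north corner (which one must verify is \emph{not} a rational tangle for precisely these slope pairs) from the sub-tangle carrying the $\SW$, $\SE$, and $\X$ boundary spheres. Since $S$ is disjoint from the $\SW$ and $\SE$ slots and from the $\X$ sphere, it survives every rational filling of $\SW$ and $\SE$, so $\Sigma(P_5(\NW,\NE,\SW,\SE))$ is reducible or toroidal, hence non-hyperbolic, for all $\SW,\SE$. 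Alternatively, these two cases can be routed through the Magic manifold: the slope pairs $\{-1,2\}$ and $\{\tfrac12,\tfrac12\}$ realize fillings of $M_5$ that coincide with fillings of $M_3$ already recorded as exceptional in the Martelli--Petronio census \cite{MPMagic2006}, together with the observation that two further fillings cannot restore hyperbolicity.

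The main obstacle is the explicit tangle bookkeeping in cases (ii) and (iii): one must carry out the isotopies that reveal the connected-sum or Conway-sphere structure and, more delicately, check that the filled corner tangle is genuinely non-rational while the complementary sub-tangle never becomes rational under the remaining two fillings --- it is exactly this non-rationality that singles out the slope values appearing in the lemma. Once the right pictures are in hand the rest is soft: ``reducible, Seifert fibered, or toroidal'' gives non-hyperbolic at once, and because the essential surfaces produced live entirely in the part of the tangle untouched by the remaining Dehn fillings, non-hyperbolicity persists.
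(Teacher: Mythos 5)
The paper itself gives no written proof of this lemma: it is in substance a coordinate translation, via the dictionary of Equation~(\ref{eqn:m5top5}) and the symmetries (\ref{eqn:rightleft})--(\ref{eqn:p5order3}), of Martelli--Petronio--Roukema's classification of the exceptional fillings of $M_5$ \cite{MPR}, in which the relevant partially filled manifolds are identified explicitly as graph manifolds. Your symmetry bookkeeping is correct and matches the structure of the statement: the Klein four-group of (\ref{eqn:rightleft})--(\ref{eqn:frontback}) together with the order-three map (\ref{eqn:p5order3}) (which acts on slopes by $x\mapsto\tfrac{1}{1-x}$, cyclically permuting $\{0,1,\infty\}$ and $\{-1,\tfrac12,2\}$ and cycling the three edge types) does reduce the lemma to the three model cases you name, and your treatment of the single-slope case is sound: $\SE=\infty$ is $\alpha_3=\infty$, the result is a chain-link (graph) manifold, and every filling of a graph manifold is again a graph manifold or reducible, hence non-hyperbolic.

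The gap is in cases (ii) and (iii), and it is twofold. First, the diagrammatic core --- exhibiting the degeneration of $P_5(-1,2,\cdot,\cdot,\cdot)$ and $P_5(\tfrac12,\tfrac12,\cdot,\cdot,\cdot)$ and verifying the non-rationality claims --- is exactly the content of the lemma for these cases, and you defer it rather than carry it out; nothing in the proposal certifies that these particular slope pairs, and not others, produce the degeneration. Second, and more seriously, the logic "an essential Conway sphere disjoint from the remaining boundary spheres survives every further filling" is not valid as stated: the lifted torus remains \emph{embedded} after filling the other cusps, but an incompressible torus can become compressible or boundary-parallel once boundary components on one side of it are filled (this is why you must exclude the complementary sub-tangle becoming rational, which you note, but you must also exclude it becoming a product, and you must actually do these exclusions for all $\SW,\SE,\X$). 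The clean statement that makes the lemma work is stronger than "toroidal or reducible": one needs $M_5$ filled along the given pair of slopes to be a \emph{graph manifold}, so that non-hyperbolicity is inherited by every further filling; this is what \cite{MPR} proves and what the lemma silently invokes. Your proposed detour through the Magic manifold does not repair this: the pairs of Lemma~\ref{lem:nonhypfilling} are not the pairs of Lemma~\ref{lem:magictanglefilling} that factor through $P_3$, and $M_3$ is hyperbolic, so factoring through it would in any case prove nothing about non-hyperbolicity.
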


\begin{lemma}\label{lem:magictanglefilling}
A filling  $\alpha = (\NW,\NE,\SW,\SE)$ of $P_5$ factors through $P_3$ if one of the following holds:

\begin{itemize}
\item $\{\NW, \NE\}$ or $\{\SW, \SE\} \in \{ \{2,-2\}, \{-1,\tfrac32\}, \{\tfrac12,\tfrac13\}, 
 \{2,\tfrac12\}, \{-1,-1\} \}$

\item $\{\NW,\SW\}$ or $\{\NE, \SE\} \in \{ \{-1,\tfrac13\}, \{\tfrac12,-2\},\{2,\tfrac32\},
 \{-1, 2\}, \{\tfrac12, \tfrac12\} \}$

\item $\{\NW, \SE\}$ or $\{\NE, \SW\} \in \{ \{\tfrac12, \tfrac32\}, \{2,\tfrac13\}, \{-1,-2\},
 \{\tfrac12,-1\}, \{-2,-2\} \}$
\end{itemize}

It factors through the mirror of $P_3$ if one of the following holds:
\begin{itemize}
\item $\{\NW, \NE\}$ or $\{\SW, \SE\} \in \{\{-1,3\}, \{2,-\tfrac12\}, \{\tfrac12,\tfrac23\}, \{-1,\tfrac12\}, \{2,2\} \}$

\item $\{\NW,\SW\}$ or $\{\NE, \SE\} \in \{ \{\tfrac12, -\tfrac12\}, \{-1, \tfrac23\}, \{2,3\}, \{\tfrac12, 2\}, \{-1,-1\}  \}$

\item $\{\NW, \SE\}$ or $\{\NE, \SW\} \in \{ \{2,\tfrac23\}, \{\tfrac12,3\}, \{-1,-\tfrac12\}, \{2,-1\}, \{\tfrac12,\tfrac12\} \}$ \end{itemize}
\end{lemma}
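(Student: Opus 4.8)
The plan is to verify this by a direct but organized computation using the explicit correspondence between fillings of $P_5$ and fillings of $P_3$, combined with the symmetries of $P_5$ already catalogued in the excerpt. First I would recall from Figure~\ref{fig:magictangle} (or rather its analogue for $P_5$) how $P_3$ sits inside $P_5$ as a two-component filling: capping off one of the five tangle slots of $P_5$ with a fixed rational tangle produces $P_3$, so that a filling $(\NW,\NE,\SW,\SE,\X)$ of $P_5$ in which, say, $\{\NW,\NE\}$ realizes that fixed rational tangle value collapses to a three-parameter filling of $P_3$. The core of the argument is then to identify, for each of the fifteen two-element sets listed (five for each of the three ``axes'' of boundary-pairings), the corresponding slot-filling of $P_5$ that factors through $P_3$, and to check that the induced rational tangle on $P_3$ is the expected one (and is the mirror in the second list). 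This is precisely the kind of bookkeeping encoded in equations~\eqref{eqn:m5top5}--\eqref{eqn:mirrorsym}.

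The key steps, in order: (1) Fix one representative factorization, say the filling of the $\NW$ and $\NE$ slots that turns $P_5$ into $P_3$; this amounts to a single picture/isotopy, analogous to Figure~\ref{fig:chaintopentangle}, showing which rational tangle must be inserted. From this one reads off one entry of the first list, e.g.\ $\{\NW,\NE\} = \{-1,-1\}$ or $\{2,-2\}$, together with the rational tangle it yields on $P_3$. (2) Apply the three involutions \eqref{eqn:rightleft}--\eqref{eqn:frontback} to propagate this single factorization to the ``$\{\SW,\SE\}$'' pairing and, more importantly, to move a $\{\NW,\NE\}$-factorization to a $\{\NW,\SW\}$- or $\{\NW,\SE\}$-factorization — this is how the three bulleted sub-cases of each list are related. (3) Apply the order-$3$ symmetry \eqref{eqn:p5order3}: since this fixes the distinguished $\X$ component but permutes the triple of slopes $(\NW,\SW,\NE)$ cyclically via $r \mapsto \tfrac{1}{1-r}$, it carries one known factorizing pair to another. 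Tracking the orbit of a single value (e.g.\ $-1 \mapsto \tfrac12 \mapsto 2 \mapsto -1$, and $-2 \mapsto \tfrac13 \mapsto \tfrac32 \mapsto -2$) under this map is exactly what produces the five-element sets $\{-1,-1\},\{2,\tfrac12\},\{\tfrac12,\tfrac13\},\{-1,\tfrac32\},\{2,-2\}$ appearing in the first bullet. (4) Finally, apply the quarter-turn-plus-mirror symmetry \eqref{eqn:mirrorsym} to pass from ``factors through $P_3$'' to ``factors through the mirror of $P_3$'', which explains why the second list is obtained from the first by the substitution $r \mapsto \tfrac1r$ on each entry (indeed $\{-1,-1\}\leftrightarrow\{-1,-1\}$, $\{2,-2\}\leftrightarrow\{\tfrac12,-\tfrac12\}$, $\{\tfrac12,\tfrac13\}\leftrightarrow\{2,3\}$, etc., matching the two displayed lists after accounting for the axis-permutation the quarter turn induces).

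The main obstacle is step (1): producing the one base factorization and being certain of both which rational tangle must be inserted into $P_5$ to recover $P_3$ and what rational-number filling of the remaining $P_3$ slots it induces, with the correct slope conventions. Everything afterward is the purely formal action of a small symmetry group on $\hatQ$, which can be checked by hand; but the base case requires a careful isotopy of tangle diagrams of the sort carried out in Figure~\ref{fig:chaintopentangle}, and getting the framing/slope identification right (so that, e.g., the filling value is $-1$ rather than $1$ or $\infty$) is where errors would creep in. A secondary, lesser obstacle is confirming that the listed sets are \emph{exhaustive} — i.e.\ that no other two-element pattern factors through $P_3$ — but for the statement as given we only need sufficiency, so it suffices to exhibit each factorization; the orbit computations of steps (2)--(4) in fact make clear that these fifteen-plus-fifteen patterns form complete orbits under the symmetry group acting on the ``obvious'' single factorization, which is the natural stopping point.
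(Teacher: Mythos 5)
There is a genuine gap, and it is in the step you flagged as ``purely formal'': the claim that a \emph{single} base factorization, propagated by the symmetry group, generates all fifteen conditions in each list. It does not. The symmetries of $P_5$ act on these conditions as follows. The three involutions (\ref{eqn:rightleft})--(\ref{eqn:frontback}) each preserve the partition of the four boundary components into the three axis-pairings: a condition on $\{\NW,\NE\}$ is carried only to a condition on $\{\NW,\NE\}$ or on $\{\SW,\SE\}$, never to one on $\{\NW,\SW\}$ or $\{\NW,\SE\}$. So, contrary to your step (2), the involutions only supply the ``or'' alternative \emph{within} each bullet. It is the order-$3$ rotation (\ref{eqn:p5order3}) that moves \emph{between} the bullets: it sends the positions $\NW\mapsto\SW\mapsto\NE\mapsto\NW$ (fixing $\SE$), so a $\{\NW,\NE\}$-condition becomes a $\{\NW,\SW\}$-condition and then a $\{\NE,\SW\}$-condition, with values transformed by $r\mapsto\tfrac{1}{1-r}$; e.g.\ $\{2,-2\}\mapsto\{-1,\tfrac13\}\mapsto\{\tfrac12,\tfrac32\}$, which are the first entries of the three successive bullets, not three entries of the first bullet. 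Consequently the five pairs $\{2,-2\},\{-1,\tfrac32\},\{\tfrac12,\tfrac13\},\{2,\tfrac12\},\{-1,-1\}$ listed for $\{\NW,\NE\}$ are \emph{not} a single orbit of one factorization: they are five independent base cases, each of which must be verified directly by a tangle isotopy showing that the corresponding $(\NW,\NE)$ filling of $P_5$ yields $P_3$. This is exactly what the paper's proof does (``check that the five fillings \dots do indeed produce $P_3$''), and it is the hard content your proposal compresses into one picture. Your step (4), using (\ref{eqn:mirrorsym}) to pass to the mirror list via $r\mapsto\tfrac1r$ together with the induced permutation of axes, is correct and matches the paper.

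A smaller point: $P_3$ has three boundary spheres and $P_5$ has five, so exhibiting $P_3$ inside $P_5$ requires filling \emph{two} slots of $P_5$ with specified rational tangles (hence the conditions are on unordered pairs), not ``capping off one of the five tangle slots'' as you wrote. Once you replace the single base case by the five required ones and reassign the roles of the involutions and the order-$3$ rotation as above, your outline coincides with the paper's argument.
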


\begin{proof}
Check that the five fillings $(2,2)$, $(-1,\tfrac32)$, $(\tfrac12,\tfrac13)$, $(2,\tfrac12)$, and $(-1,-1)$ for $(\NW, \NE)$ in $P_5$ do indeed produce $P_3$.  Then apply the symmetries of Equations (\ref{eqn:rightleft}),(\ref{eqn:topbot}), and (\ref{eqn:frontback}) to obtain all the fillings of the first line. The next two lines of fillings that yield $P_3$ are produced by applying Equation~(\ref{eqn:p5order3}) to the first fillings of the first line.
Thereafter Equation~(\ref{eqn:mirrorsym}) gives the relation between those fillings that factor through $P_3$ and those that factor through the mirror of $P_3$.
\end{proof}

\subsection{One-cusped hyperbolic fillings of $M_5$ with three lens space fillings}

The objective of this section is the following theorem which has the subsequent theorem as an immediate corollary.
\begin{theorem}\label{thm:fillingsimplifies}
If a filling $\alpha = (\NW, \NE, \SW, \SE)$ of $P_5$ admits three two-bridge fillings then $P_5(\alpha)$  either is non-hyperbolic or factors through $P_3$ or its mirror.
\end{theorem}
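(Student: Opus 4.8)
The plan is to reduce Theorem~\ref{thm:fillingsimplifies} to a finite computation organized by how the three two-bridge slopes at each boundary component interact. Suppose $P_5(\alpha)$ is hyperbolic, where $\alpha = (\NW, \NE, \SW, \SE)$, and that the remaining filling slope $\X$ can be chosen in three distinct ways $r_1, r_2, r_3$ so that each $P_5(\alpha, r_i)$ is a two-bridge link complement (equivalently, $M_5(\cdot)$ with the corresponding slope is a lens space via the Montesinos trick and Equation~(\ref{eqn:m5top5})). Since two-bridge link complements are small and in particular have no essential embedded closed surfaces, and $P_5(\alpha)$ is hyperbolic with torus boundary, the Cyclic Surgery Theorem (or rather its two-bridge/lens-space analogue via double branched covers, \cite{CGLS}) forces the three slopes $r_1, r_2, r_3$ to be mutually at distance $1$; after an automorphism of $\hatQ$ we may normalize them to be $\{\infty, 0, 1\}$ or $\{\infty, 0, -1\}$, and the order-$3$ symmetry of the pentangle fixing the $\X$-component (Equation~(\ref{eqn:p5order3})) together with the reflections cyclically permutes such triples, so it suffices to analyze a single normalized triple of $\X$-slopes.

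With the triple of $\X$-fillings pinned down, each of the three resulting links $P_5(\alpha, r_i)$ is two-bridge, hence is determined by a single rational number; so we obtain, for each of the four remaining tangle slots, a constraint saying that a certain one-parameter family of tangle sums (over the free slot) collapses to rational tangles for all three values $r_i$. The key step is then a tangle-level case analysis: I would enumerate, boundary component by boundary component, which values of a single slope $\NW$ (resp.\ $\NE$, $\SW$, $\SE$) force $P_5(\alpha, r_i)$ to be two-bridge simultaneously for the normalized triple. This is exactly the kind of bookkeeping already carried out in Lemma~\ref{lem:nonhypfilling} and Lemma~\ref{lem:magictanglefilling}: the ``good'' values are precisely those appearing in the lists of those two lemmas (the slopes $\{0,1,\infty\}$ and the various pairs $\{-1,2\},\{\tfrac12,\tfrac12\},\{2,-2\},\{-1,\tfrac32\},\dots$). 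One shows that if $\alpha$ avoids \emph{every} non-hyperbolic pattern of Lemma~\ref{lem:nonhypfilling} and \emph{every} factor-through-$P_3$ pattern of Lemma~\ref{lem:magictanglefilling}, then no choice of $\X$ can yield even two two-bridge fillings, let alone three — the obstruction being that the double branched cover $P_5(\alpha)$ then contains an essential torus or is a Seifert piece incompatible with three lens-space Dehn fillings, contradicting hyperbolicity combined with \cite{CGLS}.

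Concretely, the argument I would run is: (1) normalize the $\X$-triple to $\{\infty,0,1\}$ using Equations~(\ref{eqn:rightleft})--(\ref{eqn:p5order3}); (2) for the filling $\X=\infty$, identify $P_5(\alpha,\infty)$ as a specific $4$-strand tangle closure and read off the constraint on $(\NW,\NE,\SW,\SE)$ imposed by it being two-bridge — this is a Conway-sphere/rational-tangle calculation yielding a short list of admissible slope-pairs at each of the four slots; (3) intersect this list with the analogous lists coming from $\X=0$ and $\X=1$; (4) observe that every surviving quadruple $\alpha$ lies in one of the patterns of Lemma~\ref{lem:nonhypfilling} (so $P_5(\alpha)$ is non-hyperbolic) or Lemma~\ref{lem:magictanglefilling} (so it factors through $P_3$ or its mirror). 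The main obstacle — and where the real work lies — is step (2)–(3): controlling \emph{all} ways a four-times-punctured-sphere-boundary tangle can become rational after a single rational filling, since a priori the ``two-bridgeness'' could be witnessed by a non-obvious isotopy rather than a visible untangling. I would handle this by passing to the double branched cover throughout: $P_5(\alpha, r_i)$ is two-bridge iff $M_5(\widetilde\alpha, \widetilde{r_i})$ is a lens space, and then the constraint that a fixed hyperbolic one-cusped $M_5$-filling $N = \Sigma(P_5(\alpha))$ admits three lens-space Dehn fillings is governed entirely by the Cyclic Surgery Theorem plus the finite Martelli--Petronio census data for fillings of $M_5$ \cite{MPR} — so the finite check reduces to consulting that census, exactly as the proof of Theorem~\ref{thm:MT5Ctriples} does via Theorem~\ref{thm:MT5CimpliesMM}. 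In effect, Theorem~\ref{thm:fillingsimplifies} is the tangle-side shadow of ``$M_5$-fillings with three lens space fillings either are non-hyperbolic or factor through $M_3$,'' and the proof is the translation of that statement through the Montesinos correspondence recorded in Equations~(\ref{eqn:m5top5})--(\ref{eqn:mirrorsym}).
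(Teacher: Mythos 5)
Your skeleton matches the paper's up to the normalization step: assume $P_5(\alpha)$ does not simplify, lift to $M_5$, invoke \cite{CGLS} to get mutual distance $1$, and use Theorem~1.2 and Corollary~1.3 of \cite{MPR} (not an arbitrary automorphism of $\hatQ$, but an honest automorphism of $M_5$) to move the triple to $0,1,\infty$ upstairs, i.e.\ $\X=0,\infty,-1$ on $P_5$. The gap is in what you do next. Your steps (2)--(4) are ultimately discharged by ``consulting the Martelli--Petronio census for fillings of $M_5$, exactly as the proof of Theorem~\ref{thm:MT5Ctriples} does via Theorem~\ref{thm:MT5CimpliesMM}.'' That is circular: Theorem~\ref{thm:MT5CimpliesMM} is proved in the paper by applying Theorem~\ref{thm:fillingsimplifies}, and \cite{MPR} contains no classification of four-times-filled $M_5$'s with three lens space fillings. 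The statement you are trying to prove \emph{is} that missing classification; it cannot be outsourced to the source it feeds.

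The content you are missing is the actual mechanism for deciding when $L_0=P_5(\alpha,0)$, $L_\infty=P_5(\alpha,\infty)$, and $L_{-1}=P_5(\alpha,-1)$ are simultaneously two-bridge. Each of these links carries a visible Conway sphere; since a two-bridge link admits no essential Conway sphere, that sphere must bound a rational tangle on one side, forcing one of the four slots to be an integer or reciprocal-of-integer tangle (e.g.\ $\NW=[0,h]$ or $\NW=[n]$). This gives $2\times2\times2\times2=16$ cases (\WxNW\ vs.\ \WxSW, etc.), which the order-$3$ symmetries of $P_5$ cut to eight. But rationality of one side of the Conway sphere only makes $L_{\X}$ a Montesinos link $Q(A,B,C)$; the further condition that a Montesinos link be two-bridge --- at least one factor must have numerator $\pm1$ --- is what generates the Diophantine constraints on $h,k,m,n,\ell,p/q$, and the intersection of those constraints across the three links is what lands every surviving $\alpha$ in the lists of Lemmas~\ref{lem:nonhypfilling} and \ref{lem:magictanglefilling}. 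Without that two-layer criterion (Conway sphere bounds a rational tangle, then Montesinos factor has numerator $\pm1$) and the resulting continued-fraction case analysis, your ``finite check'' has no finite set to check, and your worry about ``non-obvious isotopies'' witnessing two-bridgeness is never actually resolved.
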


\begin{theorem}\label{thm:MT5CimpliesMM}
If a one-cusped hyperbolic manifold $M$ is obtained from a filling of $M_5$ and admits three lens space fillings, then $M$ may be obtained from a filling of $M_3$.
\end{theorem}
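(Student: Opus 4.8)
\emph{Proof proposal.} The plan is to obtain Theorem~\ref{thm:MT5CimpliesMM} directly from Theorem~\ref{thm:fillingsimplifies} by passing between the Dehn filling picture and the tangle picture set up in Section~2. First I would fix the branched cover description of $M$. Since $M$ is one-cusped and is a Dehn filling of $M_5$, it equals $M_5(\alpha_1,\dots,\alpha_4)$ for some filling of four of the five boundary tori. The involution of $M_5$ with quotient tangle $P_5$ restricts on each boundary torus to the elliptic involution, which acts as $-\Id$ on first homology and therefore preserves every slope; hence every Dehn filling of $M_5$ is equivariant and descends to a rational tangle filling of the pentangle. So $M\cong\Sigma(P_5(\alpha))$ for a filling $\alpha=(\NW,\NE,\SW,\SE)$ of $P_5$, the remaining four-marked-point sphere $\bdry P_5(\alpha)$ being double covered by $\bdry M$; this is exactly the dictionary recorded in Equation~(\ref{eqn:m5top5}).

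Next I would translate the three lens space fillings into three two-bridge fillings of the tangle $P_5(\alpha)$. If $r$ is a slope on $\bdry M$ with $M(r)$ a lens space, the same equivariance argument extends the covering involution over an equivariant filling solid torus and exhibits $M(r)$ as the double branched cover of the link $L_r\subset S^3$ obtained by the corresponding rational tangle filling of $P_5(\alpha)$ --- the Montesinos trick. A link in $S^3$ whose double branched cover is a lens space is necessarily a two-bridge link, so each $L_r$ is two-bridge. Since slopes on $\bdry P_5(\alpha)$ correspond bijectively, under the branched double cover, to slopes on $\bdry M$, three distinct lens space filling slopes of $M$ yield three distinct two-bridge fillings of $P_5(\alpha)$. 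Thus Theorem~\ref{thm:fillingsimplifies} applies: $P_5(\alpha)$ is non-hyperbolic, or it factors through $P_3$, or it factors through the mirror of $P_3$.

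Finally I would discard the first alternative and conclude. Because $M=\Sigma(P_5(\alpha))$ is hyperbolic, $P_5(\alpha)$ is a hyperbolic tangle by definition, so it must factor through $P_3$ or through the mirror of $P_3$. Taking double branched covers, $M$ is then homeomorphic to a Dehn filling of $M_3=\Sigma(P_3)$ or of its mirror $\overline{M_3}=\Sigma(\overline{P_3})$; as the magic manifold carries an orientation-reversing self-homeomorphism, $M$ is a Dehn filling of $M_3$ in either case, which is the assertion.

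As for obstacles: essentially all of the difficulty has been front-loaded into Theorem~\ref{thm:fillingsimplifies} (and the case analyses of Lemmas~\ref{lem:nonhypfilling} and \ref{lem:magictanglefilling}), which I am taking as given, so what remains here is bookkeeping of the Montesinos correspondence. The one step in this argument that is not purely formal is the claim that a link in $S^3$ with a lens space branched double cover is two-bridge: this rests on the fact that every involution of a lens space is standard (via the geometrization of $3$--orbifolds), after which the quotient is patently $S^3$ branched over a two-bridge link. I would therefore not expect a genuine difficulty beyond carrying out these translations with care and keeping track of orientations.
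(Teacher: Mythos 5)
Your proposal is correct and is exactly the paper's argument: the paper's entire proof reads ``Observe that $M_5=\Sigma(P_5)$ and lens spaces are double branched covers of two-bridge links. Then apply Theorem~\ref{thm:fillingsimplifies}.'' You have simply spelled out the implicit steps --- equivariance of fillings under the strong involution, the Montesinos trick, and the Hodgson--Rubinstein fact that a link in $S^3$ with lens space double branched cover is two-bridge --- all of which are accurate.
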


\begin{proof}
Observe that $M_5 =\Sigma(P_5)$ and lens spaces are double branched covers of two-bridge links.  Then apply Theorem~\ref{thm:fillingsimplifies}.
\end{proof}

Say a filling $\alpha = (\NW, \NE, \SW, \SE)$ of $P_5$ {\em simplifies} if either it is non-hyperbolic or it factors through $P_3$ or its mirror.  Thus the theorem asserts that any filling $\alpha$ of $P_5$ with three two-bridge fillings must simplify.  To prove this theorem we rely heavily on Lemmas~\ref{lem:nonhypfilling} and \ref{lem:magictanglefilling} which tell us when a filling simplifies. Note that the lemmas use unordered pairs, so $\{\NW, \NW\} = \{-1,2\}$ means $(\NW, \NE) =(-1,2)$ or $(2,-1)$ for example.

\begin{proof}[Proof of Theorem~\ref{thm:fillingsimplifies}]
Assume the filling $\alpha=(\NW, \NE, \SW,\SE)$ of $P_5$ does not simplify.  In particular, $\Sigma(P_5(\alpha)) = M_5(\tilde{\alpha})$ is hyperbolic and not homeomorphic to $M_3$.   If there are three choices for $\X$ such that $P(\alpha, \X)$ is two-bridge for each, then these fillings lift to three fillings of $\Sigma(P_5(\alpha)) = M_5(\tilde{\alpha})$ that produce lens spaces.  By \cite{CGLS}, these three fillings are all distance $1$.  
By Theorem~1.2 and Corollary~1.3 of \cite{MPR}, if $P(\alpha,\X)$ is a two-bridge link so that $\Sigma(P_5(\alpha,\X)) = M_5(\tilde{\alpha},\tilde{\X})$ is a lens space, then $\tilde{\X}$ may be taken to be $\infty$ by the automorphisms of $M_5$.  Hence a triple of distance $1$, non-hyperbolic fillings of $M_5(\tilde{\alpha})$ may be taken (by an automorphism of $M_5$) to be the fillings $0, 1,\infty$.  
Descending back to $P_5$, these correspond to the fillings $\X = 0, \infty, -1$.  We will show that if $P_5(\alpha,0)$, $P_5(\alpha,\infty)$, and $P_5(\alpha,-1)$ are all two-bridge links, then the filling must simplify contrary to our assumption.

\begin{figure}
\centering
\psfrag{A}[Bc][Bc][.8]{\NW}
\psfrag{B}[Bc][Bc][.8]{\NE}
\psfrag{C}[Bc][Bc][.8]{\SW}
\psfrag{D}[Bc][Bc][.8]{\SE}
\psfrag{0}[Bc][Bc]{$L_0$}
\psfrag{i}[Bc][Bc]{$L_{\infty}$}
\psfrag{m}[Bc][Bc]{$L_{-1}$}
\includegraphics[height=1in]{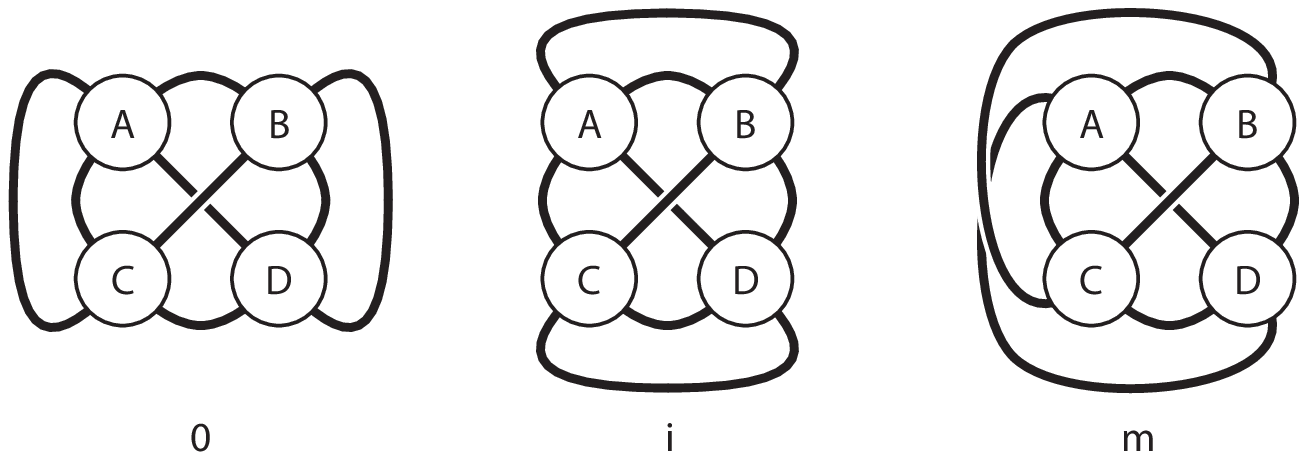}
\caption{ }
\label{fig:threelinks}
\end{figure}

The links $L_0=P_5(\alpha,0)$, $L_\infty=P_5(\alpha,\infty)$, and $L_{-1}=P_5(\alpha,-1)$ are shown in Figure~\ref{fig:threelinks}.  In each of these three links the there is an obvious Conway sphere (a sphere meeting the link in four points) that divides the link into pairs of sums of rational tangles.  View these spheres in Figure~\ref{fig:threelinks} as defined by a horizontal line, a vertical line, and the plane of the page (pull the rational tangles $\NE$ and $\SW$ in front of the page)  respectively.  We call the sums in these pairs North and South, West and East, and Front and Back accordingly.

For each of these links to be two-bridge, its Conway sphere must bound a rational tangle. Thus either North or South is rational, West or East is rational, and Front or Back is rational.  By the symmetries of $P_5$ that preserve $\X$ and its fillings, we may assume North and Front are always rational.

In order for the sum of rational tangles $A$ and $B$ (arranged as so: \tanglesum{a}{b} ) to be a rational tangle, at least one of the summands must be the reciprocal of an integer, i.e.\ a horizontal sequence of twists.  
Thus for West to be rational we must have either $\NW = [0,h]$ or $\SW = [0,k]$.  We could say ``West is rational by Northwest'' or ``West is rational by Southwest'' to indicate these two situations, but we will abbreviate these with $\WxNW$ and $\WxSW$.   Proceeding similarly with the other six cases we have the first two columns of the following chart which designate the case and the corresponding constraint on the filling $L_{\X}=P_5(\alpha,\X)$ that makes the Conway sphere bound a rational tangle. 

\[
\begin{array}{c||c|c|l}
\mbox{Case} & \mbox{Rational Constraint} & \mbox{Filling} & \mbox{Montesinos Link}\\
\hline\hline
\WxNW & \NW = [0,h]  &  \X=0      & L_0= Q([-1,h,\SW], [\NE], [\SE])\\
\WxSW & \SW =[0,k]    &  \X=0     & L_0=  Q([-1,k, \NW],[\SE], [\NE]) \\
\ExNE   & \NE = [0,m]  &   \X=0   & L_0= Q([-1,m,\SE],[\NW],[\SW]) \\
\ExSE   & \SE=[0,p]     & \X=0     &L_0  =Q([-1,p,\NE], [\SW],[\NW]) \\
\hline
\NxNW &   \NW=[n]     &  \X=\infty   & L_\infty= Q([1,n+\NE], [0,\SW], [0,\SE]) \\
\NxNE  &\NE=[\ell]       &  \X = \infty & L_\infty= Q([1,\ell+\NW], [0,\SE], [0,\SW]) \\
\hline
\FxNE &  \NW=[1,m]  & \X=-1   & L_{-1}=  Q([1,\NW], [m,1,\SW], [-1+\SE])\\
\FxSW  & \SW=[1,k]   & \X=-1  & L_{-1}= Q([1,\SE], [k,1,\NE], [-1+\NW])
\end{array}
\]

 When the Conway sphere of $L_{\X}=P_5(\alpha,\X)$ bounds a rational tangle, we can view $L_{\X}$ as a Montesinos link $Q(A,B,C)$.  The resulting Montesinos links for each of the cases are shown in the last column of the chart above.  This is illustrated for the cases $\NxNW$, $\WxNW$ and $\FxNE$ in Figure~\ref{fig:montesinostriple}; the other cases may be obtained through the symmetries of $P_5$.

\begin{figure}
\centering
\psfrag{A}[Bc][Bc][.8]{\NW}
\psfrag{B}[Bc][Bc][.8]{\NE}
\psfrag{C}[Bc][Bc][.8]{\SW}
\psfrag{D}[Bc][Bc][.8]{\SE}
\psfrag{0}[Bc][Bc]{$L_0$}
\psfrag{i}[Bc][Bc]{$L_{\infty}$}
\psfrag{m}[Bc][Bc]{$L_{-1}$}
\psfrag{n}[Bc][Bc]{\tiny$n$}
\psfrag{h}[Bc][Bc]{\tiny$h$}
\psfrag{s}[Bc][Bc]{\tiny$m$}
\includegraphics[height=2in]{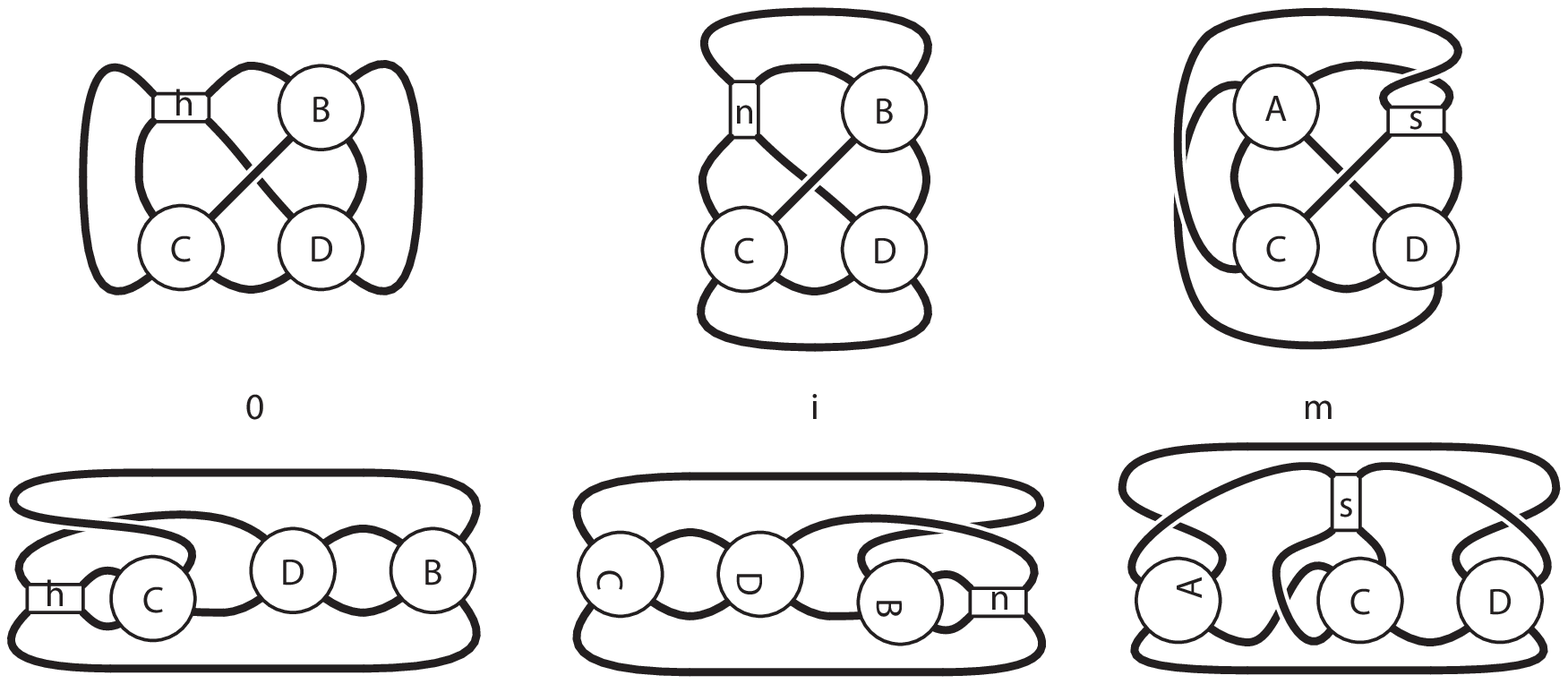}
\caption{ }
\label{fig:montesinostriple}
\end{figure}

For each $\X =0$, $\X=\infty$, and $\X=-1$ we actually want to determine when $L_{\X}$ is two-bridge and not just a Montesinos link. 
 Recall that for a Montesinos link $Q(A,B,C)$ to be two-bridge, at least one of its rational tangle factors $A$, $B$, $C$ must be associated to a number in $\hatQ$ that is the reciprocal of an integer.  Reciprocals of integers have continued fraction expansions of the form $[0,j]$ for $j\in\Z$.  The following two basic operations on continued fraction expansions facilitate the determination of conditions on a rational tangle factor to be of this form.
\begin{itemize}
\item $[a_1, \dots, a_{n-1}, a_n + [b_1, b_2, \dots, b_m]] = [a_1, \dots, a_{n-1}, a_n + b_1, b_2, \dots, b_m]$
\item If $r/s \in \hat{\Q}$ and $[a_1, \dots, a_n, r/s]=[0,j]$ for $a_1, \dots, a_n, j \in \Z$, then $r/s=[0,-a_n, \dots, -a_1, j]$.
\end{itemize}
 
The above chart  gives us $16$ cases for us to determine the conditions that force $L_0$, $L_\infty$, $L_{-1}$ to be two-bridge links.
\[
\begin{array}{cccc|cccc}
(1) & (2)  & (3) & (4) &  (5) & (6) & (7) &(8) \\
\WxNW &\WxNW &\WxNW &\WxNW &\WxSW &\WxSW &\WxSW &\WxSW \\
\NxNW & \NxNW & \NxNE& \NxNE &\NxNW & \NxNW &\NxNE& \NxNE\\
\FxNE &\FxSW &\FxNE &\FxSW &\FxNE &\FxSW &\FxNE &\FxSW \\
\hline
(9) & (10)  & (11) & (12) &  (13) & (14) & (15) &(16) \\
\ExNE &\ExNE &\ExNE &\ExNE &\ExSE &\ExSE &\ExSE &\ExSE \\
\NxNW & \NxNW & \NxNE& \NxNE &\NxNW & \NxNW &\NxNE& \NxNE\\
\FxNE &\FxSW &\FxNE &\FxSW &\FxNE &\FxSW &\FxNE &\FxSW 
\end{array}
\]
There are two order $3$ symmetries that relates some of these case.  There is an order $3$ rotation of $P_5$ that fixes the $\NE$ and $\X$ boundary components while cyclically permuting the triple $(\NW, \SW, \SE)$.  This causes a cyclic permutation of the triples $(\NxNW, \FxSW,\ExSE)$ and $(\NxNE, \FxNE,\ExNE)$. Under the action of this symmetry Cases (9), (12), (15) are equivalent  and Cases (10),(13),(16).

There is also an order $3$ rotation of $P_5$ that fixes the $\SE$ and $\X$ boundary components while cyclically permuting the triple $(\NW,\NE,\SW)$.  This causes a cyclic permutation of the triples $(\NxNW, \FxNE, \WxSW)$ and $(\WxNW, \NxNE, \FxSW)$.  Under the action of this symmetry Cases (1),(6),(7) are equivalent and Cases (2),(3),(8) are equivalent.

This leaves us with Cases (1),(2),(4),(5),(9),(10),(11),(14).  Each of these cases are treated in the following subsections. 
The general strategy is to first use the constraints from each specific case to find conditions for each of the three factors of each Montesinos link $L_0$, $L_\infty$, $L_{-1}$ that makes the link two-bridge.  We then discard situations that immediately imply (by Lemmas~\ref{lem:nonhypfilling} and \ref{lem:magictanglefilling}) that the filling simplifies.  We then proceed to consider conditions that force two or all three of these links to be two-bridge.  This frequently becomes an analysis of several subcases.  In all cases we eventually conclude that the filling simplifies.
\end{proof}

\subsubsection{Case (1): \WxNW, \NxNW, \FxNE}

This case gives the constraints $\NW=[0,h]$, $\NW=[n]$, and $\NE=[1,m]$.   The first two imply that $\NW=\pm1$.  The filling simplifies if $\NW=1$ so we take $\NW=-1$ (and so $h=1$ and $n=-1$).  Let us also set $\SW = r/s$ and $\SE = p/q$.  This gives us the three following Montesinos links.
\[
\begin{array}{c||lll}
\WxNW & L_0 &= Q([-1,1,r/s],[1,m],[p/q]) &=Q(-\tfrac{2s-r}{s-r},\tfrac{m-1}{m},\tfrac{p}{q}) \\
\NxNW & L_\infty &= Q([1,0,m], [0,r/s],[0,p/q]) &= Q(m+1, -\tfrac{s}{r}, -\tfrac{q}{p}) \\
\FxNE & L_{-1} &= Q([1,-1],[m,1,r/s],[-1+p/q]) &=Q(2,\tfrac{ms-mr-s}{s-r},\tfrac{p-q}{q})
\end{array}
\]

For each of these Montesinos links to be two-bridge, we need at least one numerator of their factors to be $\pm1$.  Hence at least one of the conditions in each of the following three rows must hold.

\[
\begin{array}{c||cc|c|c}
L_0 & m=0 & m=2  & r/s = [0,-1,1,j] & p/q = [0,j] \\
      & \NE = \infty & \NE = \tfrac12 & \SW = \tfrac{j-1}{2j-1} & \SE=-\tfrac1j \\
\hline
L_\infty & m=0 & m=-2& r/s = [i] & p/q =[i] \\
      & \NE = \infty & \NE = -\tfrac12 & \SW = i  & \SE = i \\
\hline
 L_{-1} &   &  & r/s =[0,-1,-m] & p/q = [1,h] \\
           &   &   & \SW = \tfrac{m}{m-1} & \SE = \tfrac{h-1}{h} 
 \end{array}
\]

Since $\NW = -1$, we see that the filling simplifies if any of the conditions on $\NE$ above hold.  Therefore either two of the $\SW$ conditions or two of the $\SE$ conditions above must hold.  

\smallskip

If $\SW = \tfrac{j-1}{2j-1} = i$ then $2j-1 = \pm1$ and so $j=0$ or $1$.  Hence $\SW = 1$ or $\SW =0$.

If $\SW = \tfrac{j-1}{2j-1} = \tfrac{m}{m-1}$ then $j-1=\epsilon m$ and $2j-1 = \epsilon (m - 1)$ for $\epsilon = \pm1$.  Thus $j=\mp1$ and so either $\SW = \tfrac23$ or $\SW = 0$.

If $\SW = i = \tfrac{m}{m-1}$ then $m=0$ or $m=2$.  Hence $\SW = 0$ or $\SW = 2$.

\smallskip

If $\SE = \tfrac{j-1}{3j-2} = i$ then $3j-2=\pm1$ and so $j=1$.  Hence $\SE = 0$.

If $\SE = \tfrac{j-1}{3j-2} = \tfrac{h-1}{h}$ then $j-1 = \epsilon (h -1)$ and $3j-2=\epsilon h$ for $\epsilon = \pm1$.  Thus  $\epsilon = 2j -1$ which implies $j=0$ or $j=1$.  Hence $\SW= \tfrac12$ or $\SW = 0$.

If $\SE = i = \tfrac{h-1}{h}$ then $h=\pm1$ Hence $\SE = 0$ or $\SE = 2$.

Since $\NW=-1$, these pairs of conditions all show the filling simplifies. \qed

\subsubsection{Case (2):  \WxNW, \NxNW, \FxSW}

This case gives the constraints $\NW=[0,h]=[n]$ and $\SW=[1,k]$.    The first pair of constraints implies $\NW = \pm1$.  Since the filling simplifies if $\NW=1$, we take $\NW=-1$ (and so $h=1$ and $n=-1$).  Let us also set $\NE= m/\ell$ and $\SE = p/q$.  This gives us the three following Montesinos links.
\[
\begin{array}{c|lll}
\WxNW & L_0 &=Q([-1,1,1,k], [m/\ell], [p/q]) &=Q(k-2, m/\ell, p/q) \\
\NxNW & L_\infty &= Q([1,-1+m/\ell], [0,1,k],[0,p/q]) &=Q(\tfrac{m-2\ell}{m-\ell},-\tfrac{k}{k-1},-q/p) \\
\FxSW & L_{-1} &=Q([1,p/q],[k,1,m/\ell],[-2]) &=Q(\tfrac{q-p}{q},\tfrac{k\ell-km-\ell}{\ell-m},-2)
\end{array}
\]

For each of these Montesinos links to be two-bridge, we need at least one of the numerators of their factors to be $\pm1$.  Hence at least one of the conditions in each of the following three rows must hold.

\[
\begin{array}{c||cc|c|c}
L_0 &  k=1 & k=3  & m/\ell =[0,j] & p/q = [0,j'] \\
        &  \SW=0 & \SW = \tfrac23 & \NE = -\tfrac1j  & \SE = -\tfrac1j\\
\hline
L_\infty & k=1 & k=-1 &     m/\ell = [1,-1,i]         &p/q = [i] \\
              & \SW=0 & \SW=2 & \NE = \tfrac{2i+1}{i+1} & \SE = i \\
\hline
L_{-1} &  &  &  m/\ell =[0,-1,-k,h]         & p/q = [1,h]\\
           &    &   & \NE = \tfrac{hk-2h+1}{hk-h+1} & \SE = \tfrac{h-1}{h}
\end{array}
\]

Bearing in mind that $\NW = -1$, the conditions on $k$ in the first two columns above imply the filling simplifies.  Therefore either two of the conditions on $\NE$ or two of the conditions on $\SE$ above must hold.  Note that the all three of the conditions on $\SE$ cannot hold.  We examine the six situations now.

\smallskip
If $\SE = -\tfrac1j=i$ then $\SE = 1$ or $\SE = -1$.  

If $\SE = -\tfrac1j=\tfrac{h-1}{h}$ then $h=0$ or $h=2$ and hence $\SE = \infty$ or $\SE=\tfrac12$.  

If $\SE = i = \tfrac{h-1}{h}$ then $h=1$ or $h=-1$ and hence $\SE = 0$ or $\SE = 2$.

\smallskip

If $\NE=-\tfrac1j = \tfrac{2i+1}{1+i}$ then $i=-1$ or $i=0$ and hence $\NE = \infty$ or $\NE=2$.

If $\NE = -\tfrac1j = \tfrac{hk-2h+1}{hk-h+1}$ then $h(k-2) = 0$ or $-2$.  Thus either $h=0$, $k=2$ or $(h,k) \in \{(1,0) (-1,4 ),$  $ (2,1), (-2,3)\}$.  Hence either $\NE=0$, $\SW=\tfrac12$, or $(\NE,\SW) \in\{(\infty, \infty), (\tfrac32,\tfrac34), (3,0),(\tfrac53,\tfrac23)\}$.  

If $\NE = \tfrac{2i+1}{1+i} = \tfrac{hk-2h+1}{hk-h+1}$ then $2i+1 = \epsilon(hk-2h+1)$ and $i+1 = \epsilon(hk-h+1)$ for $\epsilon = \pm1$.  Eliminating $i$, we obtain that $hk = \epsilon-1$.  Thus either $h=0$, $k=0$, or $(h,k) \in \{(1,-2), (-1, 2), (2, -1), (-2,1)\}$.  Hence either $\NE=0$, $\SW=\infty$, or $(\NE,\SW) \in \{(\tfrac12,\tfrac32), (\infty,\tfrac12),(\tfrac13,2), (-1,0)\}$.
\smallskip

Keeping in mind that $\NW=-1$,  it follows that every conclusion of the six situations implies the filling simplifies.

\subsubsection{Case (4): \WxNW, \NxNE, \FxSW }
This case gives the constraints $\NW=[0,h]$, $\NE=[\ell]$, and $\SW=[1,k]$.  Let us set $\SE =p/q$.  This gives us the three following Montesinos links.
\[
\begin{array}{c|lll}
\WxNW & L_0 &=Q([-1,h,1,k],[\ell],[p/q]) &=Q(-\tfrac{hk - h-1}{hk-h-k}, \ell, p/q) \\
\NxNE & L_\infty &=Q([1,\ell,h], [0,p/q], [0,1,k]) &=Q(\tfrac{h \ell -h-1}{h \ell -1}, -q/p, -\tfrac{k}{k-1}) \\
\FxSW & L_{-1} &=Q([1,p/q],[k,1,\ell],[-1,h]) &= Q(-\tfrac{p-q}{q}, \tfrac{k\ell - k -\ell}{\ell-1}, \tfrac{h+1}{h}) 
\end{array}
\]

For each of these Montesinos links to be two-bridge, we need at least one of the numerators of their factors to be $\pm1$.  Hence at least one of the conditions in each of the following three rows must hold.  (The first three of these conditions in each row are derived from the conditions $h(k-1)=1\pm1$, $h(\ell-1) = 1\pm1$, and $(k-1)(\ell-1) = 1\pm1$ respectively.)

\[
\begin{array}{c||ccc|cc|c}
L_0 & h=0 & k=1& (h,k) \in \{(1,3), (-1,-1), (2,2), (-2,0)\} & \ell = 1 & \ell = -1 & p/q = [0,j] \\
 & \NW=\infty & \SW = 0 & (\NW, \SW)\in \{(-1,\tfrac23) (1,2),(-\tfrac12,\tfrac12), (\tfrac12,\infty) \} &\NE =1 & \NE = -1 & \SE=-\tfrac1j \\
\hline
L_\infty & h=0&\ell=1& (h,\ell) \in \{(1,3), (-1,-1), (2,2), (-2,0)\}& k=1 & k=-1 & p/q = [i] \\
&\NW = \infty & \NE =1 & (\NW,\NE) \in \{(-1,3), (1,-1), (-\tfrac12,2),(\tfrac12,0)\}& \SW=0 & \SW=2 & \SE = i \\
\hline
L_{-1} & k=1 & \ell=1 & (k,\ell) \in\{(2,3),(3,2),(0,-1),(-1,0)\} & h=0 & h=-2 & p/q = [1,h]\\
& \SW = 0 & \NE=1 & (\SW, \NE) \in \{(\tfrac12,3),(\tfrac23,2), (\infty,-1)(2,0)\} & \NW = \infty & \NW=\tfrac12 & \SE=\tfrac{h-1}{h}
\end{array}
\]
Many of these conditions imply the filling simplifies.  We pare down these conditions to the following ones which do not alone imply that the filling simplifies.

\[
\begin{array}{c||c|cccc}
L_0 & \ell = -1 & p/q = [0,j] \\
 & \NE = -1 & \SE=-\tfrac1j \\
\hline
L_\infty &  k=-1 & p/q = [i] \\
& \SW=2 & \SE = i \\
\hline
L_{-1} & h=-2 & p/q = [1,h]\\
 & \NW=\tfrac12 & \SE=\tfrac{h-1}{h}
\end{array}
\]

Any two conditions of the middle column imply the filling simplifies.  Also all three conditions on $\SE$ cannot simultaneously hold.  Thus exactly two of the conditions on $\SE$ must hold and the other condition in the remaining row must hold.

If $\SE=i=-\tfrac1j$ then $\SE = 1$ or $\SE=-1$.   Since $\NW=\tfrac12$ by the third row,  the filling simplifies.

If $\SE = -\tfrac1j=\tfrac{h-1}{h}$ then $h=0$ or $h=2$ and so $\SE = \infty$ or $\SE = \tfrac12$.  Since $\SW=2$ by the second row, the filling simplifies.

If $\SE = i = \tfrac{h-1}{h}$ then $h=\pm1$ and so $\SE = 0$ or $\SE=2$.  Since $\NE=-1$ by the first row, the filling simplifies.\qed

\subsubsection{Case (5): \WxSW, \NxNW,\FxNE}

This case gives the constraints $\SW = [0,k]$, $\NW=[n]$, and $\NE=[1,m]$.  Let us set $\SE = p/q$.  This gives us the three following Montesinos links.
\[
\begin{array}{c|lll}
\WxSW & L_0 &=Q([-1,k,n],[p/q],[1,m]) &=Q(\tfrac{1-n-kn}{kn-1}, \tfrac{m-1}{m}, p/q)\\
\NxNW & L_\infty &=Q([1,n+1,m],[0,0,k],[0,p/q]) &= Q(\tfrac{mn-1}{mn+m-1}, k, -q/p)\\
\FxNE & L_{-1} &=Q([1,n],[m,1,0,k],[-1+p/q]) &=Q(\tfrac{n-1}{n}, \tfrac{mk+m-1}{k+1}, \tfrac{p-q}{q})
\end{array}
\]

For each of these Montesinos links to be two-bridge, we need at least one of the numerators of their factors to be $\pm1$.  Hence at least one of the conditions in each of the following three rows must hold. (The first set of three conditions in each row are derived from the conditions $n(k+1)=1\pm1$, $mn=1\pm1$, and $m(k+1) = 1 \pm1$ respectively.)

\[
\begin{array}{c||ccc|cc|c}
L_0 & n=0 & k= -1 & (n,k) \in \{(1,1), (-1,-3),(2,0), (-2,-2)\}& m=0 & m=2 & p/q = [0,j] \\
    & \NW = 0 & \SW =1 & (\NW, \SW) \in \{(1,-1), (-1, \tfrac13), (2, \infty), (-2, \tfrac12) \} & \NE = \infty & \NE=\tfrac12 & \SE = -\tfrac1j \\
\hline
L_\infty & m=0 & n=0  & (m,n) \in \{(1,2), (-1,-2), (2,1), (-2,-1)\}  & k=-1 &k=1  & p/q =[i] \\
& \NE = \infty & \NW = 0 & (\NE, \NW) \in \{(0,2), (2,-2), (\tfrac12,1),(\tfrac32,-1)\}&\SW=1 & \SW=-1 &\SE=i \\
\hline
L_{-1} &  m=0 & k=-1 & (m,k) \in \{(1,1), (-1,-3),(2,0), (-2,-2)\} & n=0 & n=2 & p/q = [1,h] \\
& \NE=\infty & \SW = 1 & (\NE, \SW) \in \{(0,-1), (2,\tfrac13 ), (\tfrac12, \infty ), (\tfrac32, \tfrac12)\} & \NW=0 & \NW=2 & \SE=\tfrac{h-1}{h}
\end{array}
\]
Many of these conditions imply the filling simplifies.  We pare down these conditions to the following ones which do not alone imply that the filling simplifies.

\[
\begin{array}{c||ccc|cc|c}
L_0 &  m=2 & p/q = [0,j] \\
    &   \NE=\tfrac12 & \SE = -\tfrac1j \\
\hline
L_\infty & k=1  & p/q =[i] \\
&  \SW=-1  &\SE=i \\
\hline
L_{-1} &  n=2 & p/q = [1,h] \\
&  \NW=2 & \SE=\tfrac{h-1}{h}
\end{array}
\]

Any two conditions of the middle column imply the filling simplifies.  Also all three conditions on $\SE$ cannot simultaneously hold.  Thus exactly two of the conditions on $\SE$ must hold and the other condition in the remaining row must hold. 

If $\SE=-\tfrac1j =i$ then $\SE = 1$ or $\SE=-1$.   Since $\NW=2$ by the third row,  the filling simplifies.

If $\SE = i = \tfrac{h-1}{h}$ then $h=\pm1$ and so $\SE = 0$ or $\SE=2$.  Since $\SW=-1$ by the second row, the filling simplifies.

If $\SE = -\tfrac1j=\tfrac{h-1}{h}$ then $h=0$ or $h=2$ and so $\SE = \infty$ or $\SE = \tfrac12$.  Since $\NE=\tfrac12$ by the first row, the filling simplifies.   \qed

\subsubsection{Case (9):   \ExNE, \NxNW, \FxNE}

This case gives the constraints $\NE = [0,m]$, $\NW=[n]$, and $\NE=[1,m']$.  The first and last imply that $\NE = \infty$ or $\tfrac12$.  The filling simplifies if $\NE=\infty$ so we take $\NE=\tfrac12$ (and so $m=-2$ and $m'=2$).  Let us also set $\SW = r/s$ and $\SE = p/q$. This gives us the three following Montesinos links.
\[
\begin{array}{c||lll}
\ExNE & L_0 &=Q([-1,-2,p/q],[n],[r/s]) &=Q(\tfrac{-3p+q}{2p-q},n,r/s)\\
\NxNW & L_\infty & = Q([1,n,-2],[0,r/s],[0,p/q]) &=Q(\tfrac{2n-1}{2n+1}, -s/r, -q/p)\\
\FxNE & L_{-1} &= Q([1,n],[-2,1,r/s],[-1+p/q])&=Q(\tfrac{n-1}{n}, \tfrac{r-2s}{r-s}, \tfrac{p-q}{q})
\end{array}
\]

For each of these Montesinos links to be two-bridge, we need at least one numerator of their factors to be $\pm1$.  Hence at least one of the conditions in each of the following three rows must hold.

\[
\begin{array}{c||cc|c|c}
L_0 & n=1 & n=-1 & r/s = [0,j] &  p/q =[0,2,-1,j] \\
        & \NW = 1 & \NW =-1 & \SW = -\tfrac1j & \SE =\tfrac{j-1}{3j-2} \\
\hline
L_\infty & n=0 & n=1 &   r/s = [i] & p/q = [i] \\
             & \NW = 0 & \NW=1 & \SW = i & \SE = i \\
\hline
L_{-1} & n=0 & n=2 &  r/s = [1,-1,h] & p/q = [1,h]\\
          & \NW = 0 & \NW =2 & \SW=\tfrac{2h+1}{h+1} & \SE=\tfrac{h-1}{h}
\end{array}
\]
Since  $\NE = \tfrac12$, we see that the filling simplifies if any of the $\NW$ conditions above hold.  Therefore either two of the $\SW$ conditions or two of the $\SE$ conditions above must hold.  

\smallskip
If $\SW=-\tfrac1j = i$ then $\SW = \pm1$.  

If $\SW =-\tfrac1j = \tfrac{2h+1}{h+1}$ then $2h+1 = \pm1$ and so $h=0$ or $-1$.  Hence $\SW = 1$ or $\SW=\infty$.

If $\SW = i = \tfrac{2h+1}{h+1}$ then $h+1 = \pm1$ and so $h=0$ or $-2$.  Hence $\SW = 1$ or $\SW=3$.
\smallskip

If $\SE=\tfrac{j-1}{3j-2} = i$ then $3j-2 = \pm1$ and so $j=1$.  Hence $\SE=0$.

If $\SE = \tfrac{j-1}{3j-2} = \tfrac{h-1}{h}$ then $\tfrac{-2j+1}{3j-2} = -\tfrac{1}{h}$.  Thus $-2j+1=\pm1$ so that $j=0,1$ and hence $\SE=2$ or $\SE=1$.

If $\SE = i = \tfrac{h-1}{h}$ then $h=\pm1$ and hence $\SE = 0$ or $\SE=-2$.

\smallskip
Since $\NE =\tfrac12$, these pairs of conditions all show that the filling simplifies. 

\qed

\subsubsection{Case (10):  \ExNE, \NxNW, \FxSW} 

This gives us the constraints $\NE = [0,m]$, $\NW=[n]$, and $\SW=[1,k]$.  Let us set $\SE=p/q$.   This gives us the three following Montesinos links.
\[
\begin{array}{c|lll}
\ExNE & L_0 &= Q([-1,m,p/q]),[n],[1,k] &=Q( -\tfrac{pm-q+p}{pm-q}, n, \tfrac{k-1}{k}) \\
\NxNW & L_\infty & =Q([1,n,m],[0,1,k],[0,p/q]) &=Q(\tfrac{mn-m-1}{mn-1}, -\tfrac{k}{k-1}, -q/p )\\
\FxSW & L_{-1} &= Q([1,p/q],[k,1,0,m],[n-1]) &=Q(\tfrac{p-q}{q}, \tfrac{km+k-1}{m+1}, n-1) 
\end{array}
\]

For each of these Montesinos links to be two-bridge, we need at least one numerator of their factors to be $\pm1$.  Hence  at least one of the conditions in each of the following three rows must hold.

\[
\begin{array}{c||c|cc|cc|cc|cc}
L_0  && n=-1 & n=1 & k=0 & k=2 & p/q = [0,-m,1,j] \\
        && \NW = -1 & \NW = 1& \SW = \infty & \SW=\tfrac12 & \SE = \tfrac{j-1}{mj+j-m} \\
\hline
L_\infty & \begin{array}{ccc} m=0&\quad &n=1 \end{array}& & &k=1 &k=-1 & p/q=[i] \\
           &  \begin{array}{ccc}\NE = \infty &\quad& \NW=1 \end{array} && & \SW=0 & \SW = 2 & \SE=i\\
 &(m,n) \in \{(1,3),(2,2), (-1,-1), (-2,0)\} &&&&& \\
& (\NE,\NW) \in \{(-1,3),(-\tfrac12,2), (1,-1),(\tfrac12,0) \}&&&&\\
\hline
L_{-1}  &\begin{array}{ccc} k=0 &\quad& m=-1 \end{array}& n=0 & n=2&&& p/q=[0,-1,h] \\
   & \begin{array}{ccc}\SW = \infty &\quad& \NE = 1 \end{array} & \NW = 0 & \NW = 2&&& \SE = \tfrac{h}{h+1}  \\
& (k,m) \in \{(1,1),(2,0),(-1,-3),(-2,-2)\}&&&&\\
 & (\SW,\NE) \in \{(0,-1),(\tfrac12,\infty),(2,\tfrac13),(\tfrac32,\tfrac12)\}&&&&\\
\end{array}
\]

Discarding the constraints that directly imply the filling simplifies, we are left with the following constraints.

\[
\begin{array}{c||c|c|c}
L_0 & n=-1 & k=2 & p/q=[0,-m,1,j] \\
       & \NW =-1 & \SW=\tfrac12 &  \SE = \tfrac{j-1}{mj+j-m}\\
\hline
L_\infty & &k=-1 &p/q = [i] \\
             & & \SW=2 & \SE = i \\
\hline
L_{-1} & n=2 && p/q = [0,-1,h] \\
          & \NW=2 &&\SE=\tfrac{h}{h+1} 
\end{array}
\]
Since the filling simplifies if $(\NW, \SW) \in \{(-1,2), (2,\tfrac12),(2,2)\}$, two of the constraints on $\SE$ must hold.   

If $\SE = \tfrac{j-1}{mj+j-m} = i$ then $j(m+1)-m = \pm1$ and hence $j=\tfrac{m\pm1}{m+1}$.  Thus either $m=0$ or $m=-2$.  If $m=0$ then $\NE=\infty$.  If $m=-2$ then $\NE=\tfrac12$ and either $j=1$ or $j=3$ and so $\SE = 0$ or $(\NE,\SE) =(\tfrac12,-2)$.  Hence the filling simplifies.

If $\SE = \tfrac{j-1}{mj+j-m} = \tfrac{h}{h+1}$ then $j-1 = \epsilon h$ and $mj+j-m = \epsilon(h+1)$ for $\epsilon = \pm1$.  Thus $m(j-1) = -1 +\epsilon = 0$ or $-2$.  Thus either $m=0$, $j=1$, or $(m,j) \in \{(1,3),(2,2), (-1,-1), (-2,0)\}$.   Therefore either $\NE=\infty$, $\SE=0$, or $(\NE,\SE) \in \{(\infty,\tfrac12),(2,1),(\tfrac12,-\tfrac12),(\tfrac23,-1)\}$.  Hence the filling simplifies.

If $\SE = i = \tfrac{h}{h+1}$ then $h=0$ or $h=-2$ and hence either $\SE=0$ (so that the filling simplifies) or $\SE =-2$.  Since these constraints on $\SE$ imply that $L_\infty$ and $L_{-1}$ are two-bridge, to have $L_0$ two-bridge we must have either $\NW=-1$, $\SW=\tfrac12$, or $\SE=\tfrac{j-1}{mj+j-m}$.  The last of these three puts us in the previous two cases.  The first two with $\SE=-2$ imply the filing simplifies.

\subsubsection{Case (11): \ExNE, \NxNE, \FxNE}

This case gives the constraints $\NE = [0,m] = [\ell] = [1,m']$.  But this has no solution for integers $\ell, m, m'$.

\subsubsection{Case (14): \ExSE,\NxNW,\FxSW}

This case gives the constraints $\SE=[0,p]$, $\NW=[n]$, and $\SW=[1,k]$.  Let us set $\NE=m/\ell$.   This gives us the three following Montesinos links.
\[
\begin{array}{c|lll}
\ExSE & L_0 &= Q([-1,p,m/\ell],[n],[1,k])&=Q(\tfrac{pm+m-\ell}{-pm+\ell},n, \tfrac{k-1}{k})\\
\NxNW & L_\infty &= Q([1,n+m/\ell],[0,1,k],[0,0,p])&=Q( \tfrac{n\ell+m -\ell}{n \ell +m},\tfrac{k}{-k+1}, p)\\
\FxSW & L_{-1} &= Q([1,0,p],[k,1,m/\ell],[n-1])&=Q(p+1, \tfrac{k\ell-km+m}{\ell-m},n-1)\\
\end{array}
\]

For each of these Montesinos links to be two-bridge, we need at least one numerator of their factors to be $\pm1$.  Hence  at least one of the conditions in each of the following three rows must hold.

\[
\begin{array}{c||cc|cc|cc|c}

L_0     & n=1& n=-1 & k=0& k=2 &        &       & \tfrac{m}{\ell}=[0,-p,1,j] \\
& \NW=1 & \NW=-1 & \SW=\infty&\SW=\tfrac12 & & & \NE= \tfrac{j-1}{j-p+jp} \\
\hline
L_\infty &         &   & k=1&k=-1 & p=1&p=-1  & \tfrac{m}{\ell}= [-n,-1,i] \\
    &         &      & \SW = 0& \SW=2 &\SE=-1 & \SE=1&\NE=\tfrac{i-n-ni }{1+i}\\
\hline
L_{-1} & n=0&n=2       &  &          & p=0&p=-2  & \tfrac{m}{\ell}= [0,-1,-k,h]\\
   & \NW=0 & \NW=2 &   &  &\SE =\infty & \SE =\tfrac12&\NE=\tfrac{1+kh}{1-h+kh}
\end{array}
\]

One observes that either the filling simplifies or at least one of the conditions in the following three rows must hold and in at least two rows the condition for $\NE$ holds.  (The cases with $\NW,\SW,\SE \in \{0,1,\infty\}$ from above are discarded below.  Then note that any pair of conditions on $\NW,\SW,\SE$ from two of the following rows implies the filling simplifies.)
\[
\begin{array}{c||c|c|c|c}
L_0      &  n=-1 & k=2 &               & \tfrac{m}{\ell}=[0,-p,1,j] \\
&  \NW=-1  &\SW=\tfrac12 & &\NE= \tfrac{j-1}{j-p+jp}  \\
\hline
L_\infty &            &k=-1 & p=1 & \tfrac{m}{\ell}= [-n,-1,i] \\
    &         & \SW=2 &\SE=-1 & \NE=\tfrac{i-n(1+i) }{1+i}\\
\hline
L_{-1}  &n=2         &          &p=-2  & \tfrac{m}{\ell}= [0,-1,-k,h]\\
   &  \NW=2 &     & \SE =\tfrac12 & \NE=\tfrac{1+kh}{1+kh-h}
\end{array}
\]

Finally we show that if two of the above equations for $\NE$ hold simultaneously, then the filling simplifies.

\smallskip
\noindent 
{\bf Case $0,\infty$:}  If $\tfrac{j-1}{j+p(j-1)}  = \tfrac{i-n(1+i) }{1+i}$ then  $j-1=\epsilon(i-n(1+i))$ and $j+p(j-1)=\epsilon(1+i)$ for $\epsilon =\pm1$.
Eliminating $j$ we obtain $i = \tfrac{\epsilon -1-n-np}{n-p+np}$. Thus $(n-1)(p+1)+1=\pm1$ and hence $(n-1)(p+1) = 0$ or $-2$.  Therefore either $n=1$, $p=-1$, or $(n,p) \in \{(2,-3), (3,-2),(0,1),(-1,0)\}$ which implies that either $\NW=1$, $\SE=1$, or $(\NW, \SE) \in \{(2,\tfrac13), (3,\tfrac12),(0,1),(-1,\infty)\}$.  These each imply the filling simplifies.

\smallskip
\noindent 
{\bf Case $0,-1$: }  If $\tfrac{j-1}{j+p(j-1)}  =\tfrac{1+kh}{1+kh-h}$ then $j-1 = \epsilon(1+kh)$ and $j+p(j-1)= \epsilon(1+kh-h)$ for $\epsilon = \pm1$.
Eliminating $j$ we obtain $h = -\tfrac{e+p}{1+kp}$.  Thus $1+kp=\pm1$ and hence $kp=0$ or $-2$.  Therefore either $k=0$, $p=0$, or $(k,p) \in \{(1,-2),(-1,2),(2,-1),(-2,1)\}$ which implies that either $\SW=\infty$, $\SE=\infty$, or $(\SW, \SE) \in \{(0,\tfrac12),(2,-\tfrac12),(\tfrac12,1), (\tfrac32,-1)\}$. These each imply the filling simplifies.

\smallskip
\noindent 
{\bf Case $\infty,-1$: }  If $ \tfrac{i-n(1+i) }{1+i}  =\tfrac{1+kh}{1+kh-h}$  then $i-n(1+i) = \epsilon(1+k h)$ and $1+i = \epsilon(1+kh-h)$.

Eliminating $i$ we obtain $h=-\tfrac{\epsilon +  n}{1 - n(k-1)}$.  Thus $ 1-n(k-1)=\pm1$ and hence $n(k-1) =0$ or $2$.  Therefore either $n=0$, $k=1$, or $(n,k) \in \{(1,3), (-1,-1), (2,2), (-2,0)\}$ which implies either $\NW=0$, $\SW=0$, or $(\NW,\SW) \in \{(1,\tfrac23), (-1,2),(2,\tfrac12),(-2,\infty)\}$.  These all imply the filling simplifies.
\qed

\subsection{
One-cusped hyperbolic fillings of the Magic Manifold with two or three lens space fillings.}

Recall that $M_3$, the magic manifold, is the exterior of the $(2,2,2)$--pretzel link.  For each boundary component of its presentation as the exterior of this pretzel link, use the standard meridian-longitude coordinates of the corresponding unknot component.   Martelli-Petronio classified all the non-hyperbolic fillings of $M_3$ \cite{MPMagic2006}, though there they used the mirror $N$ of $M_3$.  We use the present orientation to be consistent with Martelli-Petronio-Roukema \cite{MPR} (who acknowledge this difference in the sentence preceding their Theorem 2.3).  Also note that \cite{MPMagic2006} does not keep track of orientations of lens spaces.

\begin{figure}
\centering
\psfrag{n}[c][c]{\tiny $n$}
\psfrag{o}[c][c]{\tiny $-n$}
\psfrag{m}[c][c]{\tiny$m$}
\psfrag{p}[c][c]{\tiny$p/q$}
\psfrag{a}[c][c]{\small $P_3(n,4-n-\tfrac1m)$}
\psfrag{b}[c][c]{\small$P_3(3-\tfrac1m,\tfrac{p}{q})$}
\psfrag{c}[c][c]{\small$P_3(2-\tfrac1m,\tfrac{p}{q})$}
\psfrag{d}[c][c]{\small$P_3(1-\tfrac1m,1-\tfrac1n)$}
\includegraphics[width=5in]{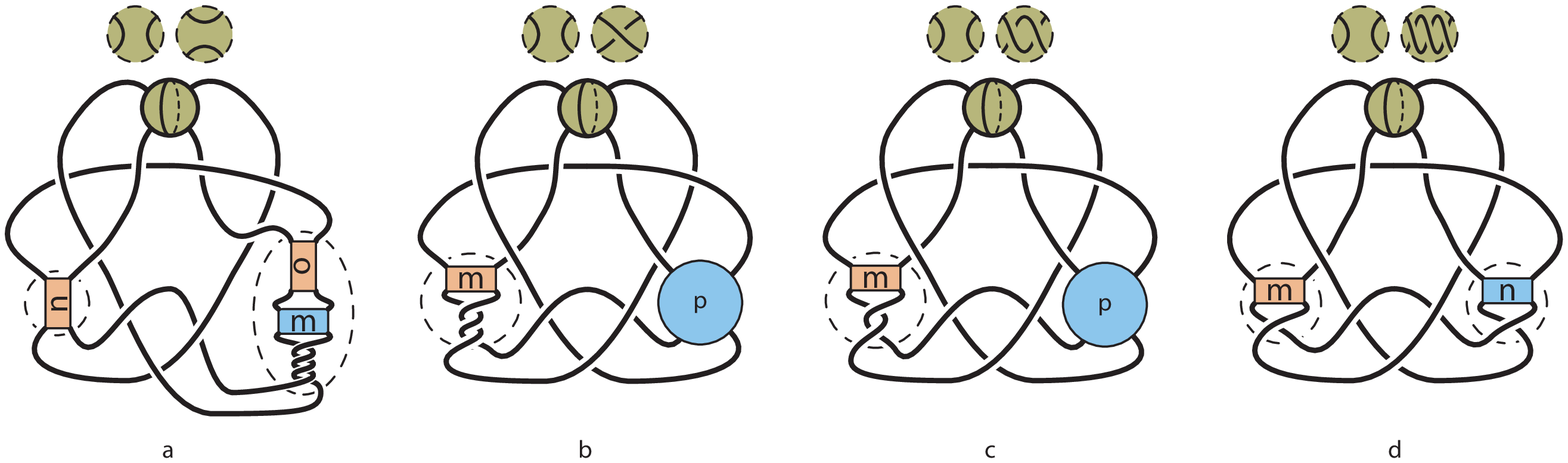}
\caption{}
\label{fig:magictanglelensfillings}
\end{figure}

\begin{theorem}\label{thm:MMtwolensfillings}
A one-cusped hyperbolic manifold obtained by filling $M_3$ that has two lens space fillings is homeomorphic to one of the following manifolds with the given filling slopes.  These are the double branched covers of the tangles shown in Figure~\ref{fig:magictanglelensfillings}.
\begin{enumerate}
\item[(0)] $X^0_{m,n}=M_3(n,4-n-\tfrac{1}{m})$, $n\in \Z-\{0,1,2,3\}$, $(m,n) \neq (-1, 4)$ or $ (-1,5)$, with lens space fillings $0, \infty$:
\[
\begin{array}{rcccc}
X^0_{m,n}(0) &=&M_3(0, n,4-n-\tfrac{1}{m})&=&  L(6m-1,2m-1)\\
 X^0_{m,n}(\infty)&=&
M_3(\infty, n,4-n-\tfrac{1}{m})&=& L(-n(1-m(4-n))-m,1-m(4-n)) 
\end{array}
\]

\item[(1)] $X^1_{m,\tfrac{p}{q}}=M_3(3-\tfrac{1}{m}, \tfrac{p}{q})$,  $m\in \Z-\{0,1\}$, $\tfrac{p}{q} \in \hatQ-\{0,1,2,3,\infty\}$, with lens space fillings $1, \infty$:
\[
\begin{array}{rcccc}
X^1_{m,\tfrac{p}{q}}(1) &=&M_3(1,3-\tfrac{1}{m}, \tfrac{p}{q})  &=& L(2m(p-3q)+p-q, m(p-3q)-q)\\
  X^1_{m,\tfrac{p}{q}}(\infty)&=& M_3(\infty, 3-\tfrac{1}{m}, \tfrac{p}{q})   &=&  L(-n(3p-q)+p,3p-q) 
\end{array}
\]

\item[(2)] $X^2_{m,\tfrac{p}{q}}=M_3(2 - \tfrac{1}{m}, \tfrac{p}{q})$, $m\in \Z-\{-1,0,1\}$, $\tfrac{p}{q} \in \hatQ-\{0,1,2,3,\infty\}$, with lens space fillings $2, \infty$:
\[
\begin{array}{rcccc}
X^2_{m,\tfrac{p}{q}}(2)&=&M_3(2, 2 - \tfrac{1}{m}, \tfrac{p}{q})&=& L(3m(p-2q)-2p+q, m(p-2q)-p+q)\\
  X^2_{m,\tfrac{p}{q}}(\infty) &=&M_3(\infty, 2 - \tfrac{1}{m}, \tfrac{p}{q})&=&L(-n(2p-q)+p,2p-q)   
\end{array}
\]

\item[(3)] $X^3_{m,n} = M_3(1-\tfrac{1}{m}, 1-\tfrac{1}{n})$, $m,n \in \Z-\{-1,0,1\}$, with lens space fillings $3, \infty$:
\[ 
\begin{array}{rcccc}
X^3_{m,n}(3) &=& M_3(3, 1-\tfrac{1}{m}, 1-\tfrac{1}{n}) &=& L((1+2m)(1+2n)-4, m(1+2n)-2) \\
  X^3_{m,n}(\infty) &=&M_3(\infty, 1-\tfrac{1}{m}, 1-\tfrac{1}{n}) &=&L(m+n-1,-1) 
\end{array}
\]

\end{enumerate}
\end{theorem}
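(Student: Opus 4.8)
The plan is to read the theorem off from the Martelli--Petronio catalog of non-hyperbolic fillings of $M_3$ \cite{MPMagic2006}. Suppose $M$ is a one-cusped hyperbolic manifold obtained by filling two of the three cusps of $M_3$, say $M = M_3(\gamma_1,\gamma_2,\ast)$ with $\ast$ marking the remaining cusp, and suppose $M(r)$ and $M(r')$ are both lens spaces. Then the triples $(\gamma_1,\gamma_2,r)$ and $(\gamma_1,\gamma_2,r')$ are lens-space fillings of $M_3$, so both appear in the Martelli--Petronio list, and since $M$ is hyperbolic the Cyclic Surgery Theorem \cite{CGLS} forces $\Delta(r,r')=1$. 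Thus it suffices to enumerate all pairs of lens-space-filling triples of $M_3$ that agree in two coordinates and whose remaining coordinates lie at distance $1$.

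First I would normalize using the symmetry group of $M_3$, which permutes and modifies its cusps and acts on the set of lens-space fillings; its structure is recorded in \cite{MPMagic2006} (and, in our orientation conventions, in \cite{MPR}). Since Martelli--Petronio present the lens-space fillings of $M_3$ as an explicit finite union of one- and two-parameter families, the enumeration becomes a bookkeeping problem, and the symmetries let one arrange that the unfilled cusp is the third one and that one of the two lens space slopes on it is $\infty$; the other slope, being distance $1$ from $\infty$, is then an integer $k$. Scanning the list and discarding the finitely many triples that are Seifert fibered rather than hyperbolic, one may take $k \in \{0,1,2,3\}$, giving exactly the four cases (0)--(3): in case $(k)$ one must describe all pairs $(\gamma_1,\gamma_2)$ for which both $M_3(\gamma_1,\gamma_2,\infty)$ and $M_3(\gamma_1,\gamma_2,k)$ are lens spaces.

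Next, for each $k$ I would intersect the two families of slope-pairs that make the respective triples lens spaces, as recorded in \cite{MPMagic2006}. The intersection is precisely the advertised two-parameter family ($X^0_{m,n}$, $X^1_{m,p/q}$, $X^2_{m,p/q}$, or $X^3_{m,n}$), and the stated parameter restrictions ($n \notin \{0,1,2,3\}$ together with $(m,n)\neq(-1,4),(-1,5)$ in case $(0)$, $p/q \notin \{0,1,2,3,\infty\}$ together with the exclusions on $m$ in cases $(1)$--$(3)$, etc.) are exactly the values at which the two-cusped manifold $M_3(\gamma_1,\gamma_2)$ fails to be hyperbolic, at which one of the two distinguished slopes ceases to yield a lens space, or at which $\gamma_1$ or $\gamma_2$ coincides with an exceptional slope; all of these are read directly off the non-hyperbolic sublist of \cite{MPMagic2006}.

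Finally, to pin down the explicit $L(p,q)$ identifications I would pass to the quotient picture: the manifolds in question are the double branched covers of the tangle fillings of $P_3$ drawn in Figure~\ref{fig:magictanglelensfillings}, and performing each of the two distinguished rational tangle fillings produces a two-bridge link whose fraction is computed by the continued-fraction manipulations set up above; the Montesinos trick then converts these fractions into the pairs $(p,q)$, and using the orientation convention of \cite{MPR} rather than the orientation-blind statements of \cite{MPMagic2006} fixes the signs. I expect the main obstacle to be precisely this bookkeeping — translating the parametrization of \cite{MPMagic2006} (stated for the mirror $N$ and without orientations) into the present coordinates, organizing the symmetry reductions so the four families are irredundant, and extracting $q \bmod p$ together with the orientation of each lens space — since the topological content (the Cyclic Surgery Theorem together with the Martelli--Petronio enumeration) is comparatively soft while the slope and homology arithmetic is where the real effort lies.
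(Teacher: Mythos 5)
Your overall strategy is the paper's: reduce to the Martelli--Petronio catalog of exceptional fillings of $M_3$, enumerate the lens-space triples, translate from their mirror $N$ into the present coordinates, and read off the four families with their parameter exclusions. Two remarks. First, the one step whose justification is actually wrong is the claim that ``the symmetries let one arrange that \dots one of the two lens space slopes on it is $\infty$.'' The symmetry group of $M_3$ cannot move an arbitrary lens-space slope of a cusp to $\infty$: the fillings $M_3(0),\dots,M_3(3)$ are Seifert fibered or graph manifolds while $M_3(\infty)\cong T^2\times I$, so no automorphism identifies these slopes, and indeed the manifolds of families $\A$ and $\B$ carry the pair of lens-space slopes $\{1,2\}$, neither of which is $\infty$. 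The reduction to pairs of the form $\{\infty,k\}$ is nevertheless valid, but for a different reason, which is the one the paper uses: since $M_3(\infty)$ is $T^2\times I$, the slope $\infty$ is \emph{automatically} a lens-space filling of every one-cusped filling of $M_3$, so ``at least two lens space fillings'' is equivalent to ``some lens space filling other than $\infty$,'' and that other filling is an exceptional slope appearing in Tables 2--4 of \cite{MPMagic2006}. (With this observation the appeal to \cite{CGLS} becomes unnecessary here, though it is harmless; the paper reserves it for the three-filling classification.) Second, for the explicit $L(p,q)$'s the paper simply transcribes and mirrors the lens spaces listed in Table 2, whereas you propose recomputing them as two-bridge fractions of the tangle fillings of $P_3$ via the Montesinos trick; that would also work and has the advantage of fixing orientations independently, but it is extra computation rather than a correction. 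With the $T^2\times I$ observation substituted for the symmetry claim, your outline matches the paper's proof.
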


\begin{remark}
Generically these hyperbolic manifolds have exactly $5$ non-hyperbolic fillings occurring on the slopes $\{0,1,2,3, \infty\}$.  Martelli-Petronio fully describe the cases when there are more than $5$ \cite{MPMagic2006}.
\end{remark}


\begin{theorem}\label{thm:MMthreelensfillings}
A one-cusped hyperbolic manifold obtained by filling $M_3$ that has three lens space fillings is homeomorphic to one of the following manifolds with the given filling slopes.
\begin{enumerate}
\item $A_{m,n}=M_3(2-\tfrac{1}{m}, 3-\tfrac{1}{n})$, $m \in \Z-\{-1,0,1\}$, $n \in \Z-\{0,1\}$, with lens space fillings $\{1, 2, \infty\}$:
\[
\begin{array}{rll}
A_{m,n}(1) &=M_3(1, 2-\tfrac{1}{m}, 3-\tfrac{1}{n}) &=L(2mn+m+2n-1, mn+m+n)  \\
 A_{m,n}(2) &=M_3(2, 2-\tfrac{1}{m}, 3-\tfrac{1}{n}) &=L(3mn-3m-5n+2, mn-m-2n+1)\\
 A_{m,n}(\infty)&=M_3(\infty, 2-\tfrac{1}{m}, 3-\tfrac{1}{n}) &=L(5mn-2m-3n+1,3-5m)
\end{array}
 \]
\item $B_{\tfrac{p}{q}}=M_3(\tfrac{5}{2}, \tfrac{p}{q})$,  $\tfrac{p}{q} \in \hatQ -\{0,1,3/2,2,3,\infty\}$, with lens space fillings $1, 2, \infty$:
\[
\begin{array}{rll}
B_{\tfrac{p}{q}}(1) &=M_3(1,\tfrac{5}{2}, \tfrac{p}{q}) &=L(-3p+11q,2p-7q) \\ 
B_{\tfrac{p}{q}}(2) &=M_3(2, \tfrac{5}{2}, \tfrac{p}{q}) &=L(8p-13q,3p-5q) \\
 B_{\tfrac{p}{q}}(\infty) &=M_3(\infty, \tfrac{5}{2}, \tfrac{p}{q}) &=L(5p-2q,2p-q) 
\end{array}
 \]
\end{enumerate}
\end{theorem}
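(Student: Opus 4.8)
The plan is to bootstrap from Theorem~\ref{thm:MMtwolensfillings}, the Cyclic Surgery Theorem \cite{CGLS}, and the Martelli--Petronio census of exceptional fillings of $M_3$ \cite{MPMagic2006}. Since a manifold with three lens space fillings in particular has two, the first step is to apply Theorem~\ref{thm:MMtwolensfillings}: the manifold $M$ is homeomorphic to one of $X^0_{m,n}$, $X^1_{m,p/q}$, $X^2_{m,p/q}$, $X^3_{m,n}$, realized as a filling $M_3(\beta,\gamma)$, and under such a homeomorphism two of the three lens space slopes are carried to $\infty$ and to the finite slope $k\in\{0,1,2,3\}$ recorded there. (This representation need not be unique; one should expect a given example to arise from more than one of the $X^i$.) By \cite{CGLS} the three lens space slopes are pairwise distance $1$, so the third slope $r$ in these coordinates satisfies $\Delta(r,\infty)=1$ and $\Delta(r,k)=1$, hence $r\in\Z$ and $r=k\pm1$; thus $M_3(r,\beta,\gamma)$ is a lens space with $r\in\{-1,0,1,2,3,4\}$.

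Next I would locate $r$ using \cite{MPMagic2006}. By the Remark following Theorem~\ref{thm:MMtwolensfillings}, for generic values of the parameters the manifold $M_3(\beta,\gamma)$ has exactly five non-hyperbolic fillings, occurring on the slopes $\{0,1,2,3,\infty\}$, of which only those at $\{k,\infty\}$ are lens spaces; a third lens space filling therefore forces the parameters onto a special locus. On that locus the third lens space slope is either (a) one of $\{0,1,2,3\}\setminus\{k\}$ --- one of the generically non-lens exceptional slopes that degenerates to a lens space for these special parameters --- or (b) one of the finitely many additional exceptional slopes that \cite{MPMagic2006} describe in their non-generic cases. Possibility (b) is handled by running through those finitely many descriptions and verifying that no new three-lens-filling manifold appears. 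In case (a), the conditions $r=k\pm1$ and pairwise distance $1$ leave the triple of lens space slopes equal to one of $\{0,1,\infty\}$, $\{1,2,\infty\}$, $\{2,3,\infty\}$; I would then use the cusp-permuting symmetries of $M_3$ to see that these three cases all reduce to the triple $\{1,2,\infty\}$, which is exactly the filling triple of families $\A$ and $\B$.

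It then remains to determine all $(\beta,\gamma)$ for which $M_3(1,\beta,\gamma)$, $M_3(2,\beta,\gamma)$, and $M_3(\infty,\beta,\gamma)$ are all lens spaces while $M_3(\beta,\gamma)$ is one-cusped hyperbolic. Concretely this amounts to revisiting the derivation of Theorem~\ref{thm:MMtwolensfillings} from \cite{MPMagic2006}, retaining only those hyperbolic fillings of $M_3$ whose lens-space slope set already meets $\{1,2,\infty\}$ in two slopes, and then imposing that the remaining slope of $\{1,2,\infty\}$ also yield a lens space. Intersecting the Martelli--Petronio lists in the three fillings $1$, $2$, $\infty$ should leave precisely $\beta=2-\tfrac1m$, $\gamma=3-\tfrac1n$ (family $\A$) and $\beta=\tfrac52$, $\gamma=p/q$ (family $\B$), with the excluded parameter values $m\in\{-1,0,1\}$, $n\in\{0,1\}$, and $p/q\in\{0,1,\tfrac32,2,3,\infty\}$ being exactly those for which $M_3(\beta,\gamma)$ fails to be one-cusped hyperbolic, again read off from \cite{MPMagic2006}. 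Finally, the displayed identifications of $A_{m,n}(1),A_{m,n}(2),A_{m,n}(\infty)$ and of $B_{p/q}(1),B_{p/q}(2),B_{p/q}(\infty)$ with specific lens spaces follow by computing $H_1$ --- with its linking form, to fix orientations --- of the corresponding triple fillings of $M_3$ from a surgery presentation, as carried out in \cite{MPMagic2006}.

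I expect the main obstacle to be organizational rather than conceptual. Because $M_3$ carries a substantial symmetry group acting on triples of filling slopes and on the parameters, a single three-lens-filling manifold can be presented as many of the $X^i$, so the real labor is a careful sweep through the Martelli--Petronio exceptional-filling tables, using the symmetries both to collapse these redundancies and to reduce the three candidate slope-triples to $\{1,2,\infty\}$, together with the separate disposal of the finitely many degenerate parameter values at which $M_3(\beta,\gamma)$ acquires extra exceptional fillings. The homology bookkeeping that pins down which lens space occurs at each slope is routine but must be kept consistent with the orientation conventions fixed for $M_3$.
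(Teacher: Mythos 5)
Your overall architecture is the same as the paper's: apply Theorem~\ref{thm:MMtwolensfillings} to present $M$ as some $X^k=M_3(\beta,\gamma)$ with two lens space fillings at $\{k,\infty\}$, use \cite{CGLS} to force the third lens space slope to be an integer adjacent to $k$, and then intersect the Martelli--Petronio data to pin down $(\beta,\gamma)$. The paper merely packages the intersection step as solving $X^k\cong X^{k+1}$ for $k=0,1,2$ (using that the only orientation-preserving automorphism of $M_3$ preserving one cusp and its slopes exchanges the other two), which amounts to the same computation as your ``intersect the lists for the three slopes'' step, and the lens spaces are read off from the tables of \cite{MPMagic2006} rather than recomputed from $H_1$ and the linking form.

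The one step that would fail as written is your normalization of the slope triple. You propose to ``use the cusp-permuting symmetries of $M_3$'' to reduce the three candidate triples $\{0,1,\infty\}$, $\{1,2,\infty\}$, $\{2,3,\infty\}$ to the single triple $\{1,2,\infty\}$ \emph{before} doing the intersection. No symmetry of $M_3$ accomplishes this: as just noted, the orientation-preserving automorphisms of $M_3$ that preserve the unfilled cusp preserve its slopes, so none can carry the slope set $\{0,1,\infty\}$ on that cusp to $\{1,2,\infty\}$. If you then run the intersection only for the slopes $1,2,\infty$, you have not accounted for manifolds realizing the triples $\{0,1,\infty\}$ or $\{2,3,\infty\}$, and the proof is incomplete. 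The repair is to carry out the intersection for all three triples (the paper's Cases 1--3); this produces $M_3(4,\tfrac1m)$ with fillings $0,1,\infty$ and $M_3(\tfrac32,1+\tfrac1m)$ with fillings $2,3,\infty$ in addition to the families $\A$ and $\B$, and only \emph{a posteriori} are these two extra families identified with the subfamilies $A_{m,-1}$ and $A_{2,n}$ of $\A$, via the sporadic orientation-reversing homeomorphism $M_3(\tfrac32,\alpha,\beta)\cong M_3(4,\tfrac{1-\alpha}{2-\alpha},3-\beta)$ of \cite[Proposition~1.5(1.3)]{MPMagic2006}. That identification is available only because these exceptional families happen to be fillings of $M_3(4)$ and $M_3(\tfrac32)$ --- a fact invisible before the enumeration --- so it cannot be invoked in advance to shorten the case analysis. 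Your case (b) (the possibility that the third slope is $-1$ or $4$, an extra exceptional slope of a non-generic $M_3(\beta,\gamma)$) is correctly flagged but must genuinely be checked against the non-generic families of \cite{MPMagic2006}; alternatively, it is disposed of by applying Theorem~\ref{thm:MMtwolensfillings} to the other two pairs of lens space fillings, which is what the paper's formulation does automatically.
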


\begin{remark}
As stated in the introduction, we define the two families of manifolds 
\[ \A = \{A_{m,n} | m,n\in\Z\} \quad \mbox{and} \quad \B = \{B_{p/q} =\{B_{p/q} | p/q \in \hatQ\} \]
without the constraints that ensure their hyperbolicity.  These manifolds with their triples of lens space fillings may be viewed as the double branched covers of the tangles given in Figures~\ref{fig:FamilyA} and \ref{fig:FamilyB}.
\end{remark}

\begin{remark}
Further note the relationships from \cite{MPMagic2006} of these families with the Whitehead Sister Manifold and the Berge Manifold.
\begin{enumerate}
\item $A_{m,-1}$ is a filling of $M_3(4)$ and $A_{2,n}$ is a filling of $M_3(\tfrac{3}{2})$.   The manifolds $M_3(4)$ and $M_3(\tfrac{3}{2})$ are mirrors and are homeomorphic to the Whitehead Sister Manifold.

\item $B_{\tfrac{p}{q}}$ is a filling of the Berge Manifold $M_3(\tfrac{5}{2})$ and $M_3(\tfrac{5}{2},1), M_3(\tfrac{5}{2},2), M_3(\tfrac{5}{2},\infty)$ are all solid tori.
\end{enumerate}
\end{remark}

\begin{proof}[Proof of Theorem~\ref{thm:MMtwolensfillings}]
This follows from synthesizing the classification of non-hyperbolic fillings of the Magic Manifold \cite{MPMagic2006}.  Our $M_3$ has the opposite orientation as the Magic Manifold $N$ as it appears in that article.  This will cause an over-all sign change for the slopes.   For this proof we will use $N$ to ease reference to \cite{MPMagic2006} and then switch to $M_3$ at the end.

By \cite[Theorem 1.1]{MPMagic2006},  $N(\alpha)$ is hyperbolic unless $\alpha \in \{ \infty, -3, -2, -1, 0\}$.  By \cite[Theorem 1.2]{MPMagic2006}, $N(\alpha,\beta)$ is hyperbolic unless $\{\alpha, \beta\} \in \{ \{1, 1\}, \{-4, -\tfrac{1}{2}\}, \{-\tfrac{3}{2}, -\tfrac{5}{2}\} \}$ or either $\alpha$ or $\beta \in \{\infty, -3, -2, -1, 0\}$. Lastly, by \cite[Theorem 1.1]{MPMagic2006}, the closed manifold $N(\alpha, \beta, \gamma)$ is hyperbolic unless one slope is $\infty$ (and hence is a lens space) or it appears in Tables 2, 3, or 4 of \cite{MPMagic2006} with $\{\alpha, \beta, \gamma\} = \{\tfrac{p}{q}, \tfrac{r}{s}, \tfrac{t}{u}\}$.  In these tables the non-hyperbolic manifold produced is also described.  With the exception of $\RP^3$ and connect sums of lens spaces where one summand is actually $S^3$, lens spaces are described in the common form $L(a,b)$ for coprime $a,b$; the parameters are partitioned whereever small Seifert fibered spaces or graph manifolds degenerate to lens spaces or connect sums.

We may thus determine the hyperbolic manifolds $N(\alpha, \beta)$ with two lens space fillings by listing the lens spaces that appear in Tables 2, 3, and 4 of \cite{MPMagic2006}, and then discarding any for which their surgery description as $N(\alpha, \beta, \gamma)$ has either two of the slopes $\alpha, \beta, \gamma$ in $\{\infty, -3, -2, -1, 0\}$ or one slope in $\{\infty, -3, -2, -1, 0\}$ with the other two in $ \{ \{1, 1\}, \{-4, -\tfrac{1}{2}\}, \{-\tfrac{3}{2}, -\tfrac{5}{2}\} \}$. This ensures that for each lens space $N(\alpha, \beta, \gamma)$ remaining in the list, one of the surgeries may be drilled to result in a hyperbolic manifold.  This leaves us with the following list of hyperbolic manifolds $N(\alpha,\beta)$ with (at least) two lens space fillings:
\begin{enumerate}
\item $N(-1+\tfrac{1}{m}, -1+\tfrac{1}{n})$, $m,n \in \Z-\{-1,0,1\}$, with lens space fillings $-3, \infty$;
\begin{align*}
 N(-3; -1+\tfrac{1}{m}, -1+\tfrac{1}{n})&=L((2n+1)(2m+1)-4,(2n+1)m-2)  \\
 N(\infty; -1+\tfrac{1}{m}, -1+\tfrac{1}{n})&=L(m+n-1, 1)
\end{align*}

\item $N(-2 + \tfrac{1}{m}, \tfrac{t}{u})$, $m\in \Z-\{-1,0,1\}$, $\tfrac{t}{u} \in \hatQ-\{-3,-2,-1,0,\infty\}$, with lens space fillings $-2, \infty$;
\begin{align*}
N(-2; -2 + \tfrac{1}{m}, \tfrac{t}{u})&= L(-3n(t+2u)+2t+u,n(t+2u)-t-u)\\
N(\infty; -2 + \tfrac{1}{m}, \tfrac{t}{u})&=L(m(2t+u)-t, 2t+u)
\end{align*}

\item $N(-3+\tfrac{1}{m}, \tfrac{t}{u})$,  $m\in \Z-\{-1,0,1\}$, $\tfrac{t}{u} \in \hatQ-\{-3,-2,-1,0,\infty\}$, with lens space fillings $-1, \infty$;
\begin{align*}
 N(-1; -3+\tfrac{1}{m}, \tfrac{t}{u})&= L(-2m(t+3u)-t-u,m(t+3u)+u)   \\
 N(\infty; -3+\tfrac{1}{m}, \tfrac{t}{u})&= L( m(3t+u)-t,3t+u )
\end{align*}

\item $N(n,-4-n+\tfrac{1}{m})$, $n\in \Z-\{-3,-2,-1,0\}$, $(m,n) \neq (-1, -4)$ or $ (-1,-5)$, with lens space fillings $0, \infty$.
\begin{align*}
N(0; n,-4-n+\tfrac{1}{m}) &= L(6m-1, -2m+1) \\
N(\infty; n,-4-n+\tfrac{1}{m}) &= L(n(1-m(4+n))-m, 1-m(4+n))
\end{align*}
\end{enumerate}

These four all appear in Table 2 of \cite{MPMagic2006}, though for $N(0; n,-4-n+\tfrac{1}{m})$, $N(-2; -2+\tfrac{1}{n},\tfrac{t}{u})$, and $N(-3;-1+\tfrac{1}{m},-1+\tfrac{1}{m})$ we obtained the mirror of what is listed in Table 2. (Recall that \cite{MPMagic2006} expressly does not keep track of orientations of lens spaces.)   The lens space fillings in Table 3 are discarded.  There are no lens space fillings in Table 4.

With a flip of signs of surgery slopes to switch to $M_3$ and then a reparametrization, we have the stated result.
\end{proof}

\begin{proof}[Proof of Theorem~\ref{thm:MMthreelensfillings}]
By the Cyclic Surgery Theorem \cite{CGLS} we only need to determine constraints on the parameters of pairs of the fillings given in Theorem~\ref{thm:MMtwolensfillings} whose lens space filling slopes contain adjacent integers.  Since the only orientation preserving automorphism of $M_3$ that preserves one cusp and its slopes exchanges the other two, we have three cases.

\medskip
{\bf Case 1:}   $X^0_{m,n}=X^1_{m',{p'/q'}}$

Thus $M_3(n,4-n+\tfrac{1}{m}) = M_3(3+\tfrac{1}{m'}, \tfrac{p'}{q'})$.  

\smallskip
{\bf Case 1a:}  $n = 3+\tfrac{1}{m'}$ and $4-n+\tfrac{1}{m} = \tfrac{p'}{q'}$

Since $n$ and $m'$ are integers, $m' = \pm1$.  For this manifold to be hyperbolic we cannot have $m'=-1$.  Thus $m'=1$, $n = 4$ and $\tfrac{p'}{q'} = \tfrac{1}{m}$ giving the manifold $M_3(4,\tfrac{1}{m})$ with lens space fillings $0, 1, \infty$.

\smallskip
{\bf Case 1b:} $n  = \tfrac{p'}{q'}$ and $4-n+\tfrac{1}{m} = 3+\tfrac{1}{m'}$

The second equation yields $n-1 = \tfrac{1}{m} - \tfrac{1}{m'}$.  For the right side to be integral, we must have $m, m' \in \{-2,+2\}$ or $m,m' \in \{-1,+1\}$.  Every one of these choices results in $n \in \{0,1,2,3\}$ except $m=-1, m'=+1$ for which $n=-1$.  This gives the manifold $M_3(-1, 4)$ which is included among the manifolds of Case 1a.

\medskip
{\bf Case 2:} $X^1_{m',\tfrac{p'}{q'}}= X^2_{m'',\tfrac{p''}{q''}} $

 $M_3(3+\tfrac{1}{m'}, \tfrac{p'}{q'}) = M_3(2 + \tfrac{1}{m''}, \tfrac{p''}{q''})$

\smallskip
{\bf Case 2a:} $3+\tfrac{1}{m'} =2 + \tfrac{1}{m''}$     and $\tfrac{p'}{q'}=\tfrac{p''}{q''}$

The first equation gives $1 = -\tfrac{1}{m'}+\tfrac{1}{m''}$ from which we conclude $m'=-2$ and $m''=2$.  This gives the manifold $M_3(\tfrac{5}{2}, \tfrac{p}{q})$ with lens space fillings $1,2,\infty$.

\smallskip
{\bf Case 2b:}  $3+\tfrac{1}{m'}  =\tfrac{p''}{q''}$    and $\tfrac{p'}{q'}=2 + \tfrac{1}{m''}$

This gives the manifold $M_3(3+\tfrac{1}{m}, 2+\tfrac{1}{n})$ with lens space fillings $1,2,\infty$.

\medskip
{\bf Case 3:} $X^2_{m,\tfrac{p}{q}} = X^3_{m,n}$

$M_3(2 + \tfrac{1}{m''}, \tfrac{p''}{q''}) =  M_3(1+\tfrac{1}{m'''}, 1+\tfrac{1}{n'''})$

\smallskip
{\bf Case 3a:}   $ 2 + \tfrac{1}{m''}=   1+\tfrac{1}{m'''}     $ and   $\tfrac{p''}{q''}=1+\tfrac{1}{n'''}        $

The first equation gives $1=\tfrac{1}{m'''} -\tfrac{1}{m''}$ from which we conclude $m'''= 2$ and $m''=-2$.  This gives the manifold $M_3(\tfrac{3}{2}, 1+\tfrac{1}{m})$ with lens space fillings $2, 3, \infty$.

\smallskip
{\bf Case 3b:}   $2 + \tfrac{1}{m''}=  1+\tfrac{1}{n'''}       $ and   $\tfrac{p''}{q''}=  1+\tfrac{1}{m'''}      $

This is equivalent to Case 3a.

\medskip

Finally, we note that the manifold $M_3(3+\tfrac{1}{m}, 2+\tfrac{1}{n})$ with lens space fillings $\{1, 2, \infty\}$ of Case 2b subsumes Cases 1a and  3a.  
In both situations we apply the orientation reversing homeomorphism of   \cite[Proposition~1.5(1.3)]{MPMagic2006} which equates $M_3(\tfrac{3}{2},\alpha, \beta)$ with $M_3(4,\tfrac{1-\alpha}{2-\alpha}, 3-\beta)$.
 Setting   $n=-1$, $\beta=3+\tfrac{1}{m}$, and $\alpha = 1, 2$, or $\infty$, this homeomorphism takes Case 2b to $M_3(4, -\tfrac{1}{m})$ with lens space fillings $0, \infty, 1$ respectively.   Then send $m$ to $-m$ to obtain Case 1a. 
Alternatively, setting $m=1$, $\beta =2+\tfrac{1}{n}$, and $\alpha = 1, 2$, or $\infty$, the inverse of this homeomorphism takes Case 2b to $M_3(\tfrac{3}{2}, 1-\tfrac{1}{m})$ with lens space fillings $\infty, 3, 2$ respectively. Then sending $m$ to $n$ produces Case 3a.
\end{proof}

\section{Alternative surgeries on GOFK knots}\label{sec:altsurg}
Families VII and VIII of Berge's doubly primitive knots in $S^3$ are comprised of the knots that embed in the fiber of a trefoil or the figure eight knot respectively \cite{TheBergeResult}.  Collectively these knots may be regarded as those that embed in the fiber of a genus one fibered knot in $S^3$, and we refer to them as GOFK knots for short.

Given a doubly primitive knot $K$ with framing of slope $p$, let us say a lens space surgery on $K$ of slope $p+1$ or $p-1$ is an {\em alternative} surgery.  (Note that a torus knot does not have an alternative surgery.)
As mentioned in the introduction, it is conceivable that an alternative surgery does not come from a doubly primitive framing.  The goal of this section is to show that any alternative surgery on a GOFK knot indeed arises from a doubly primitive framing.

\begin{theorem}\label{thm:altGOFKsurg}
Let $K$ be a non-trivial GOFK knot with an alternative lens space surgery.  Then $K$ is either the pretzel knot $P(-2,3,7)$ or the knot obtained as $-1$--surgery on the unknotted component of the Whitehead Sister Link.
\end{theorem}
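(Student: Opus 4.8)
The plan is to convert the alternative surgery into the presence of a genus one fibered knot in the resulting lens space, and then to appeal to the (finite) classification of the lens spaces that carry such a knot.

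First I would fix coordinates. As $K$ is a GOFK knot, take it to lie on the once-punctured torus fiber $F$ of a genus one fibered knot $B\subset S^{3}$ --- so $B$ is a trefoil or the figure-eight knot --- write $\phi$ for the monodromy of the fibration of $S^{3}\smallsetminus\nbhd(B)$, and recall that the fiber framing $\gamma$ of $K$ is a doubly primitive framing. The alternative surgery is along a slope $\gamma+\epsilon$ with $\epsilon=\pm1$, and $L:=S^{3}_{\gamma+\epsilon}(K)$ is a lens space. The first and crucial step is the observation that, because $\gamma$ is the surface framing of $K$ in $F$, Dehn surgery along $\gamma+\epsilon$ is precisely the operation of cutting the bundle $S^{3}\smallsetminus\nbhd(B)$ open along the fiber through $K$ and regluing by $\phi\circ\tau_K^{\mp\epsilon}$ --- a single full Dehn twist about $K$ --- while leaving the filling along $\partial\nbhd(B)$ unchanged. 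Consequently $L$ contains the genus one fibered knot $B$ with fiber $F$ and new monodromy $\psi:=\phi\,\tau_K^{\mp\epsilon}$; equivalently $M_\psi:=L\smallsetminus\nbhd(B)$ is a once-punctured torus bundle that Dehn fills to $L$, and the surgery dual $K^{*}\subset L$ of $K$ lies on a fiber of this bundle. (By Gordon--Luecke and Gabai, $L$ is neither $S^{3}$ nor $S^{1}\times S^{2}$, since otherwise $K$ would be trivial; so $L$ is a genuine lens space $L(p,q)$ with $p\ge 2$.)

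Next I would determine $M_\psi$. In the trefoil case $\phi$ is periodic with no invariant isotopy class of essential simple closed curve, so $\psi$ is automatically pseudo-Anosov; in the figure-eight case $\phi$ is already pseudo-Anosov, and the finitely many curves $K$ for which $\psi=\phi\tau_K^{\pm1}$ fails to be pseudo-Anosov can be treated directly, none of them producing a new non-trivial example (they give torus knots or the trivial knot, and a torus knot has no alternative surgery). So we may assume $M_\psi$ is a hyperbolic once-punctured torus bundle with a genuine lens space filling, which moreover is obtained from the trefoil or figure-eight knot exterior by a single surface-framed $\pm1$ surgery on a curve in a fiber. The once-punctured torus bundles with a genuine lens space filling form a short list, each with only finitely many lens space fillings, extractable from the exceptional-filling catalogs already used in this paper (those bundles arising as Dehn fillings of the chain-link and magic-manifold type manifolds whose surgeries are classified); under the two constraints above only finitely many pairs $(M_\psi,L)$ survive, and for each the genus one fibered knot $B\subset L$ and the admissible fiber curves $K^{*}$ are determined. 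This is the heart of the argument --- and the main obstacle --- since it is exactly the assertion singled out in the remark following Theorem~\ref{thm:triplesofDPknots}: determining which lens spaces obtained by surgery on a knot in $S^{3}$ also contain a genus one fibered knot.

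Finally, for each surviving candidate I would run the construction backwards: undoing the twist along $K^{*}$ in the fiber of $M_\psi$ returns $S^{3}$ with $B$ a trefoil or the figure-eight knot, and reading off the knot $K$ dual to $K^{*}$ identifies it --- up to mirror image and homeomorphism --- as the $(-2,3,7)$-pretzel knot, which is the same thing as $\WSL_{-1}$, the knot obtained by $-1$ surgery on the unknotted component of the Whitehead Sister Link, the trefoil and figure-eight presentations giving the two descriptions in the statement. Conversely, the two lens space surgeries of Fintushel--Stern on $P(-2,3,7)$ show this knot genuinely has an alternative surgery, so the list is exact. Beyond the classification step above, the remaining difficulty is bookkeeping: keeping careful track of the framings and of the identification of the fiber curve so as to recognize the resulting knot as $P(-2,3,7)$.
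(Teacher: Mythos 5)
Your opening move is the same as the paper's (Lemma~\ref{lem:gofktool}(c)): surgery along the fiber framing $\pm1$ twists the monodromy of the genus one fibered knot whose fiber contains $K$, so the resulting lens space contains a genus one fibered knot $J'$ with exponent sum $E(J')\in\{-1,1,3\}$. The gap is in the step you yourself flag as ``the heart of the argument.'' The once-punctured torus bundles admitting a lens space filling do \emph{not} form a short list, and the exponent-sum constraint does not rescue finiteness: by Lemma~\ref{lem:gofksurgerysequences} and Table~\ref{tab:gofklist}, the lens spaces with norm sequence $(r,3,2^{[s-1]})$ contain genus one fibered knots of exponent sum $r-s-1$, so taking $r-s-1\in\{-1,1,3\}$ already gives infinitely many lens spaces, hence infinitely many candidate pairs $(M_\psi,L)$. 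These bundles are also not all fillings of $M_3$ or $M_5$, so the exceptional-filling catalogs you point to do not apply. The missing ingredient is Greene's realization theorem: the alternative surgery produces a lens space of \emph{even} order $\geq 18$ obtained by positive integral surgery on a knot in $S^3$, which forces its norm sequence onto Greene's lists; intersecting those with Table~\ref{tab:gofklist} (Lemma~\ref{lem:gofklens}) is what produces a manageable list, namely $L(18,11)$, $L(32,7)$, and the residual infinite family $L(18t'+14,-9)$. Even then the exponent sum only cuts the last family down to $t'\in\{1,2,3\}$, and the cases $t'=2,3$ must be excluded by computing genera of the relevant simple knots via knot Floer homology (Theorem~\ref{thm:simpleknoteulerchar} together with Lemma~\ref{lem:simpleknotequiv} and the congruence $(\star)$). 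Your proposal has no mechanism for any of this.

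Your conclusion is also incorrect as stated. The two alternatives in the theorem are two genuinely distinct knots, not two descriptions of one: $P(-2,3,7)$ arises from the $L(18,11)$ case (surgeries $18$ and $19$), while the other knot is $\WSL_{+1}$, arising from the $L(32,7)$ case (surgeries $31$ and $32$); you collapse both to $P(-2,3,7)$ and thereby lose one of the two answers. A smaller but real error: in the trefoil case the composition of the periodic monodromy with a single Dehn twist need not be pseudo-Anosov (take the twisting curve to be one of the curves realizing the periodic map as a product of two twists), so your dichotomy ``automatically pseudo-Anosov versus finitely many exceptional curves'' does not hold as claimed; the paper avoids this issue entirely by arguing on the level of the lens spaces rather than the bundles.
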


\subsection{A few technical tools}
Our proof of Theorem~\ref{thm:altGOFKsurg} requires a few technical tools which we collect here.

\subsubsection{Simple knots}
The surgery duals to doubly primitive knots in $S^3$ belong to a special class of $(1,1)$--knots \cite{TheBergeResult}.

If $K$ is a $(1,1)$--knot and there are meridional disks of the Heegaard solid tori $V_\alpha$ and $V_\beta$ disjoint from $K$ whose boundaries intersect minimally in the torus $V_\alpha \cap V_\beta$, then we say $K$ is a {\em simple knot}.  One may show that for each (torsion) first homology class in $L(p,q)$ there is a unique oriented simple knot $K(p,q,k)$ representing $k\mu$  where $\mu$ is the homology class of the core of, say, $V_\beta$ with some choice of orientation and $q \mu$ is the homology class of the core of $V_\alpha$.  (Note that the trivial knots are the only simple knots in $S^3$ and $S^1 \times S^2$.)  We will not be concerned with orientations on our simple knots.  We say two simple knots $K(p,q,k)$ and $K(p',q',k')$ are {\em equivalent} if there is an orientation preserving homeomorphism from $L(p,q)$ to $L(p',q')$ that takes $K(p,q,k)$ to $K(p',q',\pm k')$.  The following lemma is fairly straightforward.
\begin{lemma}[Lemma 2.5 \cite{Rasmussen}]\label{lem:simpleknotequiv}
Assume $p,p'>0$.  The simple knots $K(p,q,k)$ and $K(p',q',k')$ are equivalent if $p=p'$ and either $q=q' \mod p$ and $\pm k = k' \mod p$ or $q^{-1} = q' \mod p$ and $\pm q' k = k' \mod p$. \qed
\end{lemma}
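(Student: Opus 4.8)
The plan is to prove the sufficiency by exhibiting, for each of the two hypotheses, an orientation-preserving homeomorphism $L(p,q) \to L(p,q')$ carrying $K(p,q,k)$ to $K(p,q',k')$; since the statement asserts only sufficiency, no converse is needed. The tool is the classical construction of homeomorphisms between lens spaces, which fall into two types according to whether they preserve the two solid tori of the genus-one Heegaard splitting or interchange them.

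Write $L(p,q) = V_\alpha \cup_T V_\beta$ and $L(p,q') = V_\alpha' \cup_{T'} V_\beta'$ for the standard splittings, with $\mu,\mu'$ the homology classes of the cores of $V_\beta,V_\beta'$ and $q\mu, q'\mu'$ those of the cores of $V_\alpha,V_\alpha'$, as in the definition of simple knots. If $q' \equiv q \pmod p$, I would build $f$ from the identity $V_\beta \to V_\beta'$ together with the homeomorphism $V_\alpha \to V_\alpha'$ acting on the boundary torus by a power of a Dehn twist along the meridian (i.e.\ by $\mat{1}{k}{0}{1}$, fixing the meridian slope); this $f$ is orientation-preserving and carries meridian disks of $V_\alpha, V_\beta$ to meridian disks of $V_\alpha', V_\beta'$. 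If instead $q q' \equiv 1 \pmod p$, that is, $q' \equiv q^{-1} \pmod p$, I would use the homeomorphism interchanging the two solid tori, $f(V_\alpha) = V_\beta'$, $f(V_\beta) = V_\alpha'$ --- in the model $L(p,q) = S^3/\langle\sigma\rangle$, $\sigma(u,v) = (\zeta u, \zeta^q v)$, $\zeta = e^{2\pi i /p}$, this is induced by $(u,v)\mapsto(v,u)$, which is orientation-preserving and conjugates the $\Z/p$-action with parameter $q$ to the one with parameter $q^{-1}$. Either $f$ manifestly carries a configuration of disjoint meridian disks $D_\alpha \subset V_\alpha$, $D_\beta \subset V_\beta$ missing $K(p,q,k)$ with $|\partial D_\alpha \cap \partial D_\beta|$ minimal --- which exist since $K(p,q,k)$ is simple --- to the analogous configuration for $f(K(p,q,k))$; hence $f(K(p,q,k))$ is a simple knot in $L(p,q')$.

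It remains to compute the homology class and invoke uniqueness: a simple knot in $L(p,q')$ is determined as an unoriented knot by its class up to sign, so $K(p,q',a)$ and $K(p,q',b)$ agree exactly when $a \equiv \pm b \pmod p$. In the first case $f$ sends the core of $V_\beta$ to (plus or minus) the core of $V_\beta'$, so $f(K(p,q,k))$ has class $\pm k\mu'$ and equals $K(p,q',k')$ exactly when $\pm k \equiv k' \pmod p$. In the second case $f$ sends the core of $V_\beta$ to (plus or minus) the core of $V_\alpha'$, of class $\pm q'\mu'$, so $f(K(p,q,k))$ has class $\pm q'k\mu'$ and equals $K(p,q',k')$ exactly when $\pm q' k \equiv k' \pmod p$. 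These are the two hypotheses. The main obstacle, and essentially the only content, is the orientation and gluing-convention bookkeeping: verifying that the solid-torus-interchanging homeomorphism realizes $q' \equiv q^{-1}$ rather than the orientation-reversing relation $q q' \equiv -1$, and checking that the various sign ambiguities --- in $\mu \mapsto \pm\mu'$, in the class of the core of $V_\alpha'$, and in the unoriented knot $K(p,q',\cdot)$ --- all collapse into the single $\pm$ of the statement; one should also arrange the construction of $f$ to depend only on the residues of $q,q'$ modulo $p$.
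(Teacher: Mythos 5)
Your argument is correct and is essentially the intended one: the paper gives no proof of this lemma, quoting it from Rasmussen with the remark that it is ``fairly straightforward,'' and the standard justification is exactly what you wrote --- realize the two congruence conditions on $q,q'$ by the splitting-preserving and splitting-swapping orientation-preserving homeomorphisms of lens spaces, note that these respect the genus-one splitting and hence carry simple knots to simple knots, and then use the uniqueness of the simple knot in each homology class to reduce everything to the homology computation $\mu\mapsto\pm\mu'$ or $\mu\mapsto\pm q'\mu'$. The only blemish is the notational clash of reusing $k$ for the power of the Dehn twist in your first case; otherwise there is nothing to add.
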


Simple knots are also {\em Floer simple}, meaning that their knot Floer homology is as simple as possible \cite{HeddenSimple, Rasmussen}.  This is manifested by the fact that simple knots admit a ``grid number one'' presentation for which the knot Floer homology chain complex has no differentials (see e.g.\ \cite{BakerGrigsbyHedden} or the Example at the end of \cite[Section 3.5]{Rasmussen}).   This permits the easy calculation of the Euler characteristic of a simple knot by using Theorem~1.1 of \cite{ni2009link}.  
(The Euler characteristic of a knot --- null homologous or not --- is the maximal Euler characteristic among its generalized Seifert surfaces: the properly embedded, connected, oriented surfaces with coherently oriented boundary in the exterior of the knot.)
  This is well known to the experts but we have not found it written out explicitly in any references.  Since it will be of use to us in our proof of Theorem~\ref{thm:altGOFKsurg}, we record it here and sketch its proof.

\begin{theorem}\label{thm:simpleknoteulerchar}
The Euler characteristic of the simple knot $K(p,q,k)$ is
\[ \chi(K(p,q,k)) =   \tfrac{p}{\gcd(p,k)} \cdot (1-2 \max \mathcal{A}_{p,q,k})\]
where 
\begin{itemize}
\item $\mathcal{A}_{p,q,k}$ is the set  $\{A_i \}_{i=0}^{p-1}$ symmetrized about $0$,
\item the rational numbers $A_i$ are relatively defined by $A_i - A_{i+1} = \tfrac{1}{p} (\overline{i q^{-1}} - \overline{(i+k)q^{-1}})$, and 
\item $\overline{n}$ denotes the residue of $n \mod p$ in the set $\{0, 1, \dots, p-1\}$.
\end{itemize}
\end{theorem}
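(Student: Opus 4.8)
The plan is to read off the finite set $\mathcal{A}_{p,q,k}$ as the set of Alexander gradings in which the knot Floer homology of $K(p,q,k)$ is supported, and then to invoke the fact that knot Floer homology detects the Euler characteristic of a rationally null-homologous knot. As recalled just above the statement, $K(p,q,k)$ is a simple knot, so it admits a genus one doubly pointed Heegaard diagram for $(L(p,q),K(p,q,k))$ in which the single $\alpha$-- and $\beta$--curves meet minimally, in $p$ points $x_0,\dots,x_{p-1}$, and for which the knot Floer differential vanishes identically. Hence $\widehat{HFK}(L(p,q),K(p,q,k))$ is the free module on $x_0,\dots,x_{p-1}$, each generator $x_i$ carries a well defined Alexander grading $A_i\in\Q$, the set of Alexander gradings supporting $\widehat{HFK}$ is $\{A_i\}_{i=0}^{p-1}$, and the top such grading is $\max_i A_i$.

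The next step is the grading computation. For adjacent generators, the difference $A_i-A_{i+1}$ is $n_z(\phi)-n_w(\phi)$ for the rectangular domain $\phi$ from $x_i$ to $x_{i+1}$; computing this in the torus, with the generators indexed so that $x_i$ records the residue $\overline{iq^{-1}}\bmod p$ (reflecting that the core of $V_\alpha$ represents $q\mu$), produces exactly $A_i-A_{i+1}=\tfrac1p\bigl(\overline{iq^{-1}}-\overline{(i+k)q^{-1}}\bigr)$, the recursion in the statement. Essentially this computation is carried out in \cite{BakerGrigsbyHedden} and in the Example at the end of \cite[Section~3.5]{Rasmussen}. The recursion determines the $A_i$ only up to a global additive constant, and that constant is pinned down by the conjugation symmetry of knot Floer homology, which---after the standard normalization of the Alexander grading---forces the multiset $\{A_i\}$ to be symmetric about $0$. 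Thus $\{A_i\}$ symmetrized about $0$ equals $\mathcal{A}_{p,q,k}$, and the top Alexander grading carrying nonzero $\widehat{HFK}$ is $\max\mathcal{A}_{p,q,k}$.

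Finally, $[K(p,q,k)]=k\mu$ has order $d=\tfrac{p}{\gcd(p,k)}$ in $H_1(L(p,q))\cong\Z/p\Z$, so $K(p,q,k)$ is rationally null-homologous of order $d$. By Theorem~1.1 of \cite{ni2009link} (knot Floer homology detects the Thurston norm, hence the Euler characteristic, of a rationally null-homologous knot in a rational homology sphere), the top Alexander grading in which $\widehat{HFK}$ is nonzero equals $\tfrac12\bigl(1-\tfrac{\chi(K(p,q,k))}{d}\bigr)$. Combining this with the previous paragraph and solving for $\chi$ gives $\chi(K(p,q,k))=d\bigl(1-2\max\mathcal{A}_{p,q,k}\bigr)=\tfrac{p}{\gcd(p,k)}\bigl(1-2\max\mathcal{A}_{p,q,k}\bigr)$, as claimed.

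The main obstacle is the grading computation of the second paragraph, together with the bookkeeping needed to make it consistent across three sets of conventions: the indexing of the generators and of the two basepoints in the grid diagram, the ``symmetrized about $0$'' normalization built into the definition of $\mathcal{A}_{p,q,k}$, and the normalization of the Alexander grading underlying the statement of \cite{ni2009link}. Once these are aligned, the remaining steps are formal.
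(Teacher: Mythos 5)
Your argument is essentially the paper's own proof sketch: a grid number one diagram with $p$ generators and vanishing differential, relative Alexander gradings given by the stated recursion, symmetrization about $0$ to fix the absolute gradings, and Theorem~1.1 of \cite{ni2009link} together with the homological order $\tfrac{p}{\gcd(p,k)}$ to convert the top grading into the Euler characteristic. The only cosmetic difference is where the factor $\tfrac{p}{\gcd(p,k)}$ is carried --- the paper takes $\tfrac{p}{\gcd(p,k)}A_i$ as the relative Alexander gradings and absorbs the factor into Ni's term $y(h)$, while you keep $A_i$ as the gradings and put the factor into the statement of Ni's theorem --- and both normalizations yield the same final formula.
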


\begin{proof}[Sketch of Proof]
The grid number one diagram of $K(p,q,k)$ gives a knot Floer homology chain complex with $p$ generators and no differentials.  The numbers $\tfrac{p}{\gcd(p,k)} A_i$ are the relative Alexander gradings of these generators.  Symmetrizing the set of these gradings about $0$ makes them the absolute Alexander gradings (which may alternatively be computed by way of their Maslov gradings).  Note that $\tfrac{p}{\gcd(p,k)}$ is the homological order of the knot and equivalent to the minimal number of times a meridian of the knot intersects a generalized Seifert surface for the knot.  We then apply Theorem~1.1 of \cite{ni2009link} to obtain the Euler characteristic.   In our situation the term $y(h)$ of that theorem is $\tfrac{p}{\gcd(p,k)} \max \mathcal{A}_{p,q,k}$.  
\end{proof}

\subsubsection{Simple knots dual to GOFK knots}
From Greene's presentation \cite{Greene2013Realization} of Rasmussen's tabulation \cite{Rasmussen} of Berge's doubly primitive knots \cite{BergeSolidTori}: a doubly primitive knot in $S^3$ with framing of positive slope $p$ in the GOFK family VII or VIII is surgery dual to the simple knot $K(p,q,k)$ where 
\[  k^2+\epsilon(k+1) = 0 \mod p \tag{$\star$}\]
has a solution for some integer $0<k<p$ and $q \equiv -k^2 \mod p$. Here $\epsilon = +1$ or $-1$ for family VII or VIII respectively.   Note that any integer congruent to $k \mod p$ works equivalently.  We will use equation ($\star$) to determine if a lens space contains a knot dual to a GOFK knot and, if so, its homology class and hence the particular simple knot.

\subsubsection{Exponent sums of genus one fibered knots}
The {\em exponent sum}, an invariant of once-punctured torus bundles and genus one fibered knots (and $3$--braids)  helps to determine when two lens spaces may contain genus one fibered knots with monodromies that differ by a Dehn twist.

Let $T$ be the oriented torus minus an open disk, and let $a,b$ be two oriented simple closed curves in the interior of $T$ intersecting once transversally so that $a \cdot b =+1$.   For a curve $c$ in the interior of $T$, let $\tau_c$ be a positive Dehn twist along $c$.  The mapping class group $\mathcal{M}(T)$, the group of isotopy classes of orientation preserving diffeomorphisms of $T$ that act as identity on $\bdry T$, is well known to be isomorphic to the three strand braid group $B_3$.  Indeed,
\[ \mathcal{M}(T) = \langle \tau_a, \tau_b \vert \tau_a \tau_b \tau_a = \tau_b \tau_a \tau_b \rangle. \]
Given $\phi \in \mathcal{M}(T)$, we may therefore write $\phi$ as a word in $\tau_a$ and $\tau_b$ (with positive and negative powers).  From the presentation above, any such word for $\phi$ has the same exponent sum.  Define $E(\phi)$ to be this exponent sum.  Observe that $E(\phi)$ is actually an invariant of the conjugacy class of $\phi$.  
Thus for a genus one fibered knot $J$ with monodromy $\phi$ --- that is, a knot whose exterior is the once-punctured torus bundle $T \times [0,1] /(x,1)\sim(\phi(x),0)$ --- further define $E(J) = E(\phi)$.  
Indeed, viewing a genus one fibered knot as the lift of the braid axis in the double branched cover of a closed $3$--braid, this exponent sum agrees with the standard exponent sum for braids.

For $\epsilon = \pm1$, let $J_\epsilon$ be the genus one fibered knot in $S^3$ with monodromy $\tau^{\epsilon}_b  \tau_a$.  Then $J_+$ is the positive trefoil and $J_-$ is the figure eight knot.  Note that $E(J_\epsilon) = 1+\epsilon$.

\subsubsection{Constraints on alternative surgeries}

\begin{lemma}\label{lem:gofktool}
Assume $K \subset S^3$ is a non-trivial doubly primitive GOFK knot of slope $p>0$ in family VII or VIII.
If $p\pm1$--surgery on $K$ is a lens space $L(p\pm1,q)$  then
\begin{enumerate}[(a)]
\item $p\pm1$ is even and at least $18$,
\item $q \neq 1 \mod p$,
\item $L(p\pm1,q)$ contains a (nullhomologous) genus one fibered knot $J'$ with $E(J') \in \{-1,1,3\}$,
\item there is a doubly primitive knot $K' \subset S^3$ of slope $p\pm1$ in a Berge-Gabai family (I,II, III, IV, V) or a sporadic family (IX,X) such that $p\pm1$ surgery on $K'$ is $L(p\pm1,q)$, and
\item $\widehat{HFK}(K') \cong \widehat{HFK}(K)$ --- in particular $K$ and $K'$ have the same genus and Euler characteristic.
\end{enumerate}
Here $\pm1$ denotes a consistent choice of either $+1$ or $-1$.
\end{lemma}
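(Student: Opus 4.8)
The plan is to establish the five conclusions in the order (a)(parity), (c), (d)--(e), (b), (a)(bound), since each leans on the previous ones. First I would record two preliminary reductions. A torus knot has no alternative surgery, so $K$ is not a torus knot; and a non-trivial, non-torus knot in $S^3$ with a non-trivial lens space surgery is hyperbolic by the classification of the non-hyperbolic case (Theorem~\ref{thm:nonhyperbolic}). So $K$ is hyperbolic, and the Cyclic Surgery Theorem \cite{CGLS} applies to the distance-$1$ lens space fillings of slopes $p$ and $p\pm1$. For the parity assertion in (a), reduce the defining congruence $(\star)$ modulo $2$: $k(k+\epsilon)$ is a product of consecutive integers, hence even, so $k^2+\epsilon(k+1)=k(k+\epsilon)+\epsilon\equiv 1\pmod 2$; thus $2\nmid p$ and $p\pm1$ is even. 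I will want this parity available later.

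For (c) I would use the defining geometry of the GOFK families. By definition $K$ lies in the once-punctured torus fibre $F$ of the genus one fibred knot $J_\epsilon$ (the positive trefoil for $\epsilon=+1$, the figure eight for $\epsilon=-1$, with monodromy $\tau_b^\epsilon\tau_a$ and $E(J_\epsilon)=1+\epsilon$), and the surface framing that $F$ induces on $K$ is exactly the doubly primitive slope $p$. Hence the alternative slope $p\pm1$ is $\lambda_F\pm\mu_K$, a $\pm1$ surgery relative to the page framing of $K\subset F$, and performing it composes the open book monodromy with a single Dehn twist $\tau_K^{\pm1}$. The resulting open book presents $L(p\pm1,q)$ with a once-punctured torus page and monodromy $\tau_b^\epsilon\tau_a\,\tau_K^{\pm1}$, so its binding $J'$ (the image of $J_\epsilon$) is a genus one fibred knot in $L(p\pm1,q)$; it is nullhomologous because it bounds a page, and $E(J')=(1+\epsilon)\pm1$, which lies in $\{\epsilon,\epsilon+2\}\subseteq\{-1,1,3\}$. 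This gives (c).

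For (d)--(e): as $L(p\pm1,q)=S^3_{p\pm1}(K)$ is integral surgery on a knot in $S^3$, Greene's resolution of the realization problem \cite{Greene2013Realization} furnishes a Berge knot $K'\subset S^3$ with $L(p\pm1,q)=S^3_{p\pm1}(K')$ and $p\pm1$ the doubly primitive framing of $K'$. The knot Floer homology of a knot in $S^3$ with a lens space surgery is determined by that lens space and the (integral) slope — the Alexander polynomial is forced by the Reidemeister torsion of $L(p\pm1,q)$, and then $\widehat{HFK}$ is pinned down by the standard lens-space-surgery results — so $\widehat{HFK}(K')\cong\widehat{HFK}(K)$, which is (e), and in particular $g(K')=g(K)$ and $\chi(K')=\chi(K)$. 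It remains to identify the Berge family of $K'$. It cannot be GOFK (families VII, VIII), for then its doubly primitive slope $p\pm1$ would be odd by the parity in (a), contradicting that $p\pm1$ is even; and the Berge families outside I--V, IX, X are excluded either by the same parity or by confronting the homology class realized against the genus-one-fibred constraint of (c). Hence $K'$ lies in a Berge--Gabai family I--V or a sporadic family IX, X.

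For (b): $L(p\pm1,q)$ cannot be $L(p\pm1,1)$, since by (c) it contains a nullhomologous genus one fibred knot with exponent sum in $\{-1,1,3\}$, while (by the classification of genus one fibred knots in lens spaces underlying this section) $L(p\pm1,1)$ carries no such knot once $p\pm1$ is large; equivalently $q\not\equiv 1\pmod{p\pm1}$. Finally, for the bound $p\pm1\ge 18$ in (a): by $(\star)$ there are only finitely many GOFK doubly primitive knots of slope $p$ below any given value, each of which is either a torus knot (hence has no alternative surgery) or a specific small knot whose Dehn surgeries are catalogued; a direct check shows the first that admits an alternative lens space surgery is the $(-2,3,7)$ pretzel knot at $p=19$ with alternative slope $18$. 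The main obstacle will be the combination in (d)--(e): invoking realization together with the Heegaard Floer determination of $\widehat{HFK}$, and cleanly excluding the GOFK and the other Berge families for $K'$; the finite verification behind the numerical bound is tedious but routine given Berge's list and the known small surgeries.
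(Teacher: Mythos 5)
Your arguments for (c), (d), and (e) are essentially the paper's: the Dehn twist on the once-punctured torus page giving $E(J')=1+\epsilon\mp1\in\{-1,1,3\}$, Greene's realization theorem producing $K'$, parity excluding families VII--VIII for $K'$, and the lens-space-surgery determination of $\widehat{HFK}$. Your mod-$2$ reduction of $(\star)$ is a pleasant elementary substitute for citing Berge for the parity of $p$. The problems are concentrated in the bound in (a) and in (b).

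For the bound $p\pm1\ge 18$, your ``direct check'' is asserted rather than performed, and it is not routine: for each odd $p\le 17$ you would have to enumerate the solutions of $(\star)$, identify the corresponding knots in $S^3$, and then determine \emph{all} of their lens space surgeries --- which for any hyperbolic member of the list is exactly the kind of statement this lemma is trying to establish, so the argument is close to circular as written. The paper instead invokes the result of \cite{Baker2008GOFK} that a knot in $S^3$ with an odd lens space surgery of order less than $19$ must be a torus knot; combined with oddness of $p$ and the fact that non-trivial torus knots admit no alternative surgery, this gives $p\ge 19$ with no case analysis. Without that (or an equivalent) input, your proof of the bound has a genuine gap. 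This then infects (b): you prove (b) \emph{before} the bound, but your obstruction --- that $L(p\pm1,1)$ contains no genus one fibered knot of exponent sum in $\{-1,1,3\}$ --- is simply false for small even orders (the norm sequence $(r)$ contributes exponent sums $r\pm1$, so $L(2,1)$ and $L(4,1)$ do contain such knots), so (b) cannot be extracted until $p\pm1\ge 18$ is in hand. The paper's proof of (b) sidesteps all of this by citing \cite{kmos}: positive integral surgery on a non-trivial knot in $S^3$ never yields $L(n,1)$. Either reorder and supply the missing input for the bound, or adopt the \cite{kmos} argument for (b).
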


\begin{proof}
(a) Berge shows that $p$ must be odd for knots in families VII and VIII \cite{BergeSolidTori}.  The first author shows that any knot in $S^3$ with an odd lens space surgery of order less than $19$ must be a torus knot \cite{Baker2008GOFK}.  Non-trivial torus knots do not have alternative surgeries.

(b) Positive surgery on a knot gives a lens space $L(n,1)$ only if the knot is trivial \cite{kmos}.

(c) Since $K$ is a knot in the fiber of a genus one fibered knot $J_\epsilon$ in $S^3$ and the fiber gives $K$ a framing of slope $p$, $p\pm1$ surgery effects a $\mp1$ Dehn twist along $K$ in the monodromy of $J_\epsilon$.   Thus $J_\epsilon$ is sent to a genus one fibered knot $J'$. In particular there exists $\zeta \in \mathcal{M}(T)$ such that the monodromy of $J'$ is conjugate to $(\tau_b^\epsilon \tau_a) (\zeta^{-1} \tau_a^{\mp1} \zeta)$.  Hence $E(J') = 1+\epsilon \mp 1$.

(d) Greene shows that any lens space obtained by a positive integral surgery on a knot in $S^3$ must be obtained by that surgery on one of Berge's doubly primitive knots along its doubly primitive slope \cite{Greene2013Realization}.  Due to (a), this means $K'$ must belong to a Berge-Gabai family or a sporadic family.  (As Rasmussen notes \cite{Rasmussen}, families VI, XI, and XII of Berge's original list of doubly primitive knots may be absorbed into the other families.)

(e) Greene also shows that the surgery duals to $K$ and $K'$ in $L(p\pm1,q)$ must be homologous (after reorienting one if needed) and the hat version of their knot Floer homologies must be isomorphic \cite{Greene2013Realization}.  Hence they have the same genus \cite{ni2009link,ghiggini2008knot}.
\end{proof}

\subsection{Norm Sequences for Lens Spaces}

The lens space $L(p,q)$ is obtained by $-p/q$ surgery on the unknot.
Whenever we have a continued fraction expansion $p/q = [a_1, a_2, \dots, a_n]$ with integer coefficients
the lens space $L(p,q)$ may also be obtained surgery on the linear chain link of $n$ components where the $i$th component has surgery coefficient $-a_i$.
To this continued fraction and surgery descriptions we correspond the integer sequence $(a_1, \dots, a_n)$.   If $a_i \geq 2$ for all $i$, we say $(a_1, \dots, a_n)$ is a {\em norm sequence} for the lens space $L(p,q)$.      If $a_i \geq 0$ for all $i$ or the sequence is the empty sequence $()$, then we say it is a {\em weak norm sequence} for the lens space.  (Our terminology is chosen to be similar to what appears in \cite{Greene2013Realization}.  There Greene obtains a vertex basis $\{x_1, \dots, x_n\}$ for a linear lattice $\Lambda(p,q)$ where the associated sequence of norms $\nu=(|x_1|, \dots, |x_n|)$ gives the coefficients for a continued fraction expansion of $p/q$.)   Observe that, through the surgery description correspondence, a weak norm sequence and its reverse may be regarded as equivalent.  Also a non-empty weak norm sequence in which some $a_i = 0$ or $1$ may be reduced to a shorter such sequence except for the sequences $(0)$ and $(1)$.  The sequence $(0)$ corresponds to the lens space $S^1 \times S^2$, and $(1)$ corresponds to $S^3$ as does $() = (a_1, 0)$. Further note that if $(a_1, \dots, a_n)$ is a norm sequence for a lens space then its reverse $(a_n, \dots, a_1)$ is the only other positive sequence of the same lens space.   We consider norm sequences up to this reversal.   

A word on notation. As does Greene, we use Lisca's convention that $2^{[t]}$ stands for the number $2$ repeated $t$ times in a sequence; e.g.\ $(\dots, 2^{[3]}, \dots) = (\dots, 2,2,2, \dots)$.  This should be clear when $t>0$.  The cases $t=0$ and $t=-1$ will also arise.  A $2^{[0]}$ in a sequence may simply be omitted.  Hence $(\dots, a, 2^{[0]}, b, \dots) = (\dots, a, b, \dots)$ and $(\dots, a, 2^{[0]}) = (\dots, a)$.   For $2^{[-1]}$ use the relations $(\dots, a, 2^{[-1]}, b,\dots) = (\dots, a+b-2, \dots)$ and $(\dots, a, b, 2^{[-1]}) = (\dots, a)$.  (One may understand these relations through the associated continued fractions or paths in the Farey Tesselation, for example.)

\begin{lemma}\label{lem:gofksurgerysequences}
Assume $L(p,q)$ contains a genus one fibered knot.  Then either $L(p,q) \cong S^3$ or $S^1 \times S^2$ or $L(p,q)$ has one of the  norm sequences in Table~\ref{tab:gofklist} 
 with corresponding exponent sum for some integers $r,s\geq2$.

\begin{table}[h!]
\begin{tabular}{c||c|c|c|c|c|c|c}
Norm Seq.\   & $(r,2,s)$ & $(r)$  & $(r,3)$ & $(r,3,2^{[s-1]})$ & $(2^{[r-1]})$ & $(4,2^{[s-1]})$ & $(2^{[r-1]},4,2^{[s-1]})$ \\ 
\hline
\begin{tabular}{c} Exponent Sum \\ of GOFK \end{tabular} & $r+s-1$ & \begin{tabular}{c} $r\pm1$,  \small{or} \\ $-3$ if $r=4$\end{tabular} & $r-2$ & $r-s-1$ & $-r\pm1$ & $-r-2$ & $-r-s-1$
\end{tabular}
\caption{}
\label{tab:gofklist}
\end{table}

\end{lemma}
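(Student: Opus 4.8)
The plan is to exploit the description, recalled just above, of a genus one fibered knot as the lift of the braid axis in the double branched cover of a closed $3$--braid. The double branched cover of a link in $S^3$ is a lens space precisely when the link is $2$--bridge, with the unknot giving $S^3$ and the $2$--component unlink giving $S^1\times S^2$ (a split closed $3$--braid otherwise gives a connected sum with an $S^1\times S^2$ summand, hence not a lens space). Therefore a lens space $L(p,q)$ contains a genus one fibered knot if and only if $L(p,q)\cong\Sigma(\widehat\beta)$ for some $\beta\in B_3$ whose closure $\widehat\beta$ is a $2$--bridge link, and in that case the exponent sum of the associated genus one fibered knot equals the exponent sum $E(\beta)$ of the braid. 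So the work reduces to enumerating, up to conjugacy, mirror image, and the $\sigma_1\leftrightarrow\sigma_2$ symmetry, the $3$--braids with $2$--bridge closure, reading off $p/q$ as a continued fraction from each, and computing each exponent sum.

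First I would dispose of the destabilizable braids: if $\beta$ is conjugate to a braid destabilizing to a $2$--braid $\gamma$, then $\widehat\beta=\widehat\gamma$ is the $(2,k)$--torus link with $k=E(\gamma)$, so $\Sigma(\widehat\beta)=L(k,1)$ (or $S^3$, $S^1\times S^2$ when $k=\pm1,0$), while $E(\beta)=k\pm1$ according to the sign of the destabilization; this yields the column $(r)$ with exponent sum $r\pm1$ and, after taking mirrors, $(2^{[r-1]})$ with exponent sum $-r\pm1$. For the remaining braids, whose closures have braid index exactly $3$, I would invoke the classification of closed $3$--braids (Birman--Menasco / Murasugi): up to conjugacy such a $\beta$ is $\Delta^{2d}$ times an alternating block word $\sigma_1^{a_1}\sigma_2^{-b_1}\cdots\sigma_1^{a_k}\sigma_2^{-b_k}$. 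Imposing that $\widehat\beta$ be $2$--bridge --- equivalently that its reduced alternating diagram admit a rational tangle decomposition --- forces the block data into a short list of shapes governed by at most two free parameters $r,s\ge 2$, together with a handful of sporadic cases in which the once-punctured torus bundle $\Sigma(\widehat\beta)\setminus(\text{gof knot})$ is hyperbolic and only finitely many of its fillings are lens spaces; these sporadic cases (appealing to known exceptional-filling data for the small hyperbolic once-punctured torus bundles) supply the alternatives ``$r\pm1$'' and ``$-3$ if $r=4$''. For each surviving shape the continued fraction $p/q$ is read directly off the block lengths and then normalized, using the reduction rules and the $2^{[t]}$ conventions set up before the statement, to the unique norm sequence up to reversal; this produces the columns $(r,2,s)$, $(r,3)$, $(r,3,2^{[s-1]})$, $(4,2^{[s-1]})$, $(2^{[r-1]},4,2^{[s-1]})$, naturally paired with their mirrors (mirroring negates $E$ and sends $L(p,q)$ to $L(p,p-q)$, whose norm sequence is the ``dual'' one obtained by the Riemenschneider point rule). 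The exponent sum recorded in each column is then just the sum of the block exponents with the destabilization corrections, which one checks against the invariant $E$.

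The main obstacle is the exhaustive and non-redundant bookkeeping in the second step: one must use the conjugacy, mirror, and $\sigma_1\leftrightarrow\sigma_2$ symmetries to cut the alternating block words down to exactly the listed shapes without missing a family or double-counting one, and must pin down the few sporadic pseudo-Anosov cases where the punctured-torus-bundle exterior is hyperbolic. A secondary, purely computational, obstacle is the continued-fraction normalization --- correctly applying the $2^{[0]}$ and $2^{[-1]}$ conventions at the ends of the blocks, matching sequence reversal to mirror image, and verifying that the exponent-sum row of Table~\ref{tab:gofklist} is consistent with the established value of $E$, including the ``$\pm1$'' ambiguity coming from destabilization and the special value $-3$ at $r=4$.
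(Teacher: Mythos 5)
Your translation of the problem into closed $3$--braids with two-bridge closure is exactly the paper's starting point, and your handling of the destabilizable braids correctly produces the $(r)$ and $(2^{[r-1]})$ columns with exponent sums $r\pm1$ and $-r\pm1$. The gap is in the step that produces everything else. The paper's proof turns on one concrete fact, extracted from Murasugi's classification of two-bridge links of braid index $3$ together with Birman--Menasco's work on multiple $3$--braid presentations of a link: \emph{every} $3$--braid whose closure is two-bridge is conjugate to $\sigma_1^a\sigma_2^{-2}\sigma_1^b\sigma_2$ for some $a,b\in\Z$. This braid has exponent sum $a+b-1$ and its closure is the two-bridge link with continued fraction $[a,2,b]$, so the entire content of Table~\ref{tab:gofklist} is the normalization of the weak norm sequence $(a,2,b)$ as $(a,b)$ ranges over $\Z^2$ --- a small finite chart of cases. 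You instead start from the general block normal form $\Delta^{2d}\sigma_1^{a_1}\sigma_2^{-b_1}\cdots$ and assert that two-bridgeness ``forces the block data into a short list of shapes governed by at most two free parameters''; that forcing is precisely the content that needs proof, and without pinning down the two-parameter normal form (or an equivalent) you cannot certify that the list of columns is complete.

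Second, the mechanism you propose for the exceptional entries is wrong. You attribute the alternatives ``$r\pm1$'' and ``$-3$ if $r=4$'' to ``sporadic pseudo-Anosov cases'' to be settled by ``exceptional-filling data for the small hyperbolic once-punctured torus bundles.'' The $r\pm1$ entries are not sporadic: they are the infinite family you already accounted for by destabilization (equivalently, the positive and negative Hopf-band plumbings on the annular open book, which in the normal form are the cases $b=1$ and $b=0$, yielding the sequences $(a-1)$ and $(a)$). The value $-3$ at $r=4$ comes from $a=b=-1$, whose monodromy acts on $H_1$ of the fiber with trace $\pm2$ and is therefore reducible, so the corresponding once-punctured torus bundle is not hyperbolic and would not be found among ``small hyperbolic'' bundles. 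No Dehn-filling or hyperbolicity input enters this lemma at all; every entry of the table, exceptional or not, falls out of the same uniform computation of $[a,2,b]$.
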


\begin{remark}
Note the norm sequences $(r)$ and $(2^{[r-1]})$ each have two exponent sums. These correspond to the genus one fibered knots obtained by positive and negative Hopf bands onto an annular open book.  The norm sequence $(4)$ has a third exponent sum associated to it since $L(4,1)$ has a third genus one fibered knot.
\end{remark}

\begin{proof}
If $L(p,q)$ contains a genus one fibered knot,  then the corresponding two-bridge link has braid index at most $3$ \cite{baker-cgofkils}.   Any $3$--string braid whose closure is a two-bridge link is conjugate to the braid $\sigma_1^a \sigma_2^{-2} \sigma_1^b \sigma_2$ for some integers $a,b$ as one may observe from Murasugi's classification of two-bridge links with braid index $3$ \cite{Murasugi1991braid} (see also \cite{Stoimenow2006ThreeBraids}) and Birman-Menasco's study of multiple presentations of links as closed $3$ braids \cite{BM-linksviaclosedbraidsIII}.
  (Also note that the mirror $\sigma_1^{-a} \sigma_2^2 \sigma_1^{-b} \sigma_2^{-1}$ is conjugate to  $\sigma_1^{-a-1} \sigma_2^{-2} \sigma_1^{-b-1} \sigma_2$.)    This braid has exponent sum $a+b-1$, and its closure may be isotoped into the two-bridge plat closure of the braid $\sigma_1^a \sigma_2^{2} \sigma_1^b$.   Hence this two-bridge link has the associated continued fraction $[a,2,b]$  for some integers $a,b$.  This gives the first norm sequence $(a,2,b)$ in Table~\ref{tab:gofklist} if both $a,b \geq 2$.   

So assume it is not the case that both $a,b \geq 2$. By reversal of the sequence we may assume both $a \geq b$ and $1 \geq b$.    The (weak) norm sequences for these cases are given in the chart below from which one may produce the rest of Table~\ref{tab:gofklist}.   Recall that the sequences $(1)$ and $()$ correspond to $S^3$ while $(0)$ corresponds to $S^1 \times S^2$.

\[
\begin{array}{c||cccc}
(a,2,b)   & b=1 & b=0 & b=-1 & b=-c\leq2 \\
\hline
a\geq 2  & (a-1) & (a) & (a,3) & (a,3,2^{[c-1]}) \\
a=1       & (0)    & (1) & (2) & (2^{[c]}) \\
a=0      &           &(0)  & () & (2^{[c-1]}) \\
a=-1    &             &     &(4) & (4,2^{[c-1]}) \\
a=-d\leq2 &          &    &     &(2^{[d-1]},4,2^{[c-1]})
\end{array}
\]
\end{proof}

\begin{lemma}\label{lem:gofklens}
If a lens space both contains a genus one fibered knot and may be obtained by positive integral surgery on a knot in $S^3$, then up to orientation preserving homeomorphism the lens space $L(p,q)$ and the homology class $k \mod p$ of some orientation of the surgery dual knot satisfy
\[ (p,q,k) \in \{(n,1,1), (7,3,2), (13,4,3), (13,9,2), (18,11,5), (19,3,4), (27,11,4), (32,7,5), (9t+14,-9,3) \}\]
where $n$ and $t$ range over the integers.
\end{lemma}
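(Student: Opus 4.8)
The plan is to intersect two classifications. By Lemma~\ref{lem:gofksurgerysequences}, a lens space containing a genus one fibered knot is $S^3$, is $S^1\times S^2$, or has a norm sequence of one of the seven shapes in Table~\ref{tab:gofklist}. By Greene's solution to the lens space realization problem \cite{Greene2013Realization}, a lens space obtained by positive integral surgery on a knot in $S^3$ is obtained by that surgery on one of Berge's doubly primitive knots \cite{TheBergeResult,BergeSolidTori} along its doubly primitive slope; hence, by Rasmussen's tabulation \cite{Rasmussen}, its surgery dual is a simple knot $K(p,q,k)$ lying on an explicit list grouped into the torus-knot family, the Berge--Gabai families, the genus one fibered families (governed by equation~$(\star)$), and the sporadic families, the remaining Berge families being redundant. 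A lens space satisfying both hypotheses of the lemma therefore has a norm sequence lying in the overlap of these two lists, and then $k$ is recorded on Rasmussen's list.

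First I would dispose of the degenerate cases. The manifold $S^1\times S^2$ is excluded because positive integral surgery on a knot in $S^3$ yields a manifold with finite cyclic first homology of positive order; and $S^3=L(1,1)$ has trivial first homology, so its surgery dual lies in homology class $0$, matching the $n=1$ member of the family $(n,1,1)$. The torus knots --- among which we count the unknot --- should account for the rest of the infinite family $(n,1,1)$ (whose norm sequences lie among the shapes of Table~\ref{tab:gofklist}) together with only finitely many further matches. For the remaining cases I would run Rasmussen's Berge--Gabai and sporadic families one at a time, compute the norm sequence of each resulting lens space using the negative continued fraction conventions together with the $2^{[0]}$, $2^{[-1]}$ reduction rules and the reversal invariance set up before Lemma~\ref{lem:gofksurgerysequences}, and determine when that sequence has one of the shapes in Table~\ref{tab:gofklist}. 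This should leave precisely the finitely many lens spaces listed as $(7,3,2)$, $(13,4,3)$, $(13,9,2)$, $(18,11,5)$, $(19,3,4)$, $(27,11,4)$, $(32,7,5)$ together with a single one-parameter subfamily yielding $(9t+14,-9,3)$. For each match I would read off the homology class of the surgery dual from Rasmussen's description --- for the genus one fibered duals this amounts to solving $(\star)$ --- and normalize $(p,q,k)$ to the listed representative via Lemma~\ref{lem:simpleknotequiv}.

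The main obstacle is the bookkeeping in the middle step: one must match norm sequences across two lists organized in quite different ways, keep track of orientations throughout, and verify that the several infinite shape-families appearing on the two sides overlap only in the claimed finite set and one one-parameter family, with no spurious coincidence among the longer sequences overlooked. A subtlety worth flagging in the write-up is that one lens space may be realized by several inequivalent Berge knots with distinct dual homology classes --- for instance $L(32,7)$ arises both as $(32,7,5)$ and, as the $t=2$ member of the one-parameter subfamily, as $(32,23,3)\cong(32,7,11)$ --- so the stated set is genuinely a set of surgery realizations rather than a set of lens spaces.
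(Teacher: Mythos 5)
Your overall architecture coincides with the paper's: both arguments intersect Lemma~\ref{lem:gofksurgerysequences} with Greene's realization theorem and perform the comparison at the level of norm sequences. The substantive difference is how the surgery side is organized. The paper never enumerates Berge's knot families; it takes Greene's own tabulation of the norm sequences of realizable lens spaces --- the finitely many parametrized ``small type'' sequences of Tables 2 and 3 of \cite{Greene2013Realization}, plus six ``large type'' shapes assembled from dual continued fractions $1/[a_1,\dots,a_\ell]+1/[b_1,\dots,b_m]=1$ via the Riemenschneider point rule --- and matches these against Table~\ref{tab:gofklist} by combinatorial observations about where entries different from $2$ may sit in a Table~\ref{tab:gofklist} sequence. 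Your route instead starts from Rasmussen's $(p,q,k)$--parametrization and would have to convert each family into continued-fraction form before any matching can occur; that conversion is precisely the content of Greene's Section~9, so you would in effect be re-deriving his tables. This is where essentially all of the work lies, and the proposal does not carry any of it out.

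Beyond that, there is a genuine gap in the enumeration as described: running only ``Rasmussen's Berge--Gabai and sporadic families'' omits Berge's families VII and VIII, which also realize lens spaces by positive integral surgery along the doubly primitive slope and therefore must be tested against Table~\ref{tab:gofklist}; a lens space realized only by such a knot would be invisible to your case analysis. (Their contributions turn out to coincide with torus-knot entries such as $(7,3,2)$ and $(13,4,3)$, where $(\star)$ has a solution, but that is a conclusion requiring proof, not a permissible omission.) Relatedly, the family $(n,1,1)$ is accounted for by the unknot alone, since by \cite{kmos} positive integral surgery yields $L(n,\pm1)$ only on the trivial knot; the nontrivial torus knots contribute sporadic entries instead, and deciding which of the lens spaces $L(ab\pm1,\mp a^2)$ have norm sequences in Table~\ref{tab:gofklist} is itself an infinite two-parameter check rather than an assertion. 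Indeed that check is less routine than ``finitely many further matches'' suggests: $11$--surgery on $T_{2,5}$ yields $\pm L(11,4)$, and since $11/7=[2,2,-3]$ the proof of Lemma~\ref{lem:gofksurgerysequences} exhibits $b(11,7)$ as a closed $3$--braid, so this lens space does contain a genus one fibered knot; any faithful execution of your torus-knot pass must reconcile this example with the stated list. Your closing observation that $L(32,7)$ occurs on the list with two inequivalent dual classes (via Lemma~\ref{lem:simpleknotequiv}) is correct and worth retaining.
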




\begin{proof}
Greene lists all the norm sequences of lens spaces that may be obtained by positive surgery on some knot in $S^3$.  In particular he divides them into large types and small types.  We refer the reader to Sections 9.3 and 9.4 \cite{Greene2013Realization} for their descriptions.  

For the small types, one may compare the sequences $\nu$ in Tables 2 and 3 \cite{Greene2013Realization} directly with the sequences of Lemma~\ref{lem:gofksurgerysequences}. Most all of the sequences in these tables have three or more elements that are $3$ or greater (for any valid choice of $a,b,c$), and thus cannot appear in the Table~\ref{tab:gofklist}.   Let us list those that may have at most two elements that are $3$ are greater by their associated Proposition in Tables 2 and 3:  6.4(5), 6.5(1), 6.5(2), 7.5(3), 8.7(2), 8.7(3), 8.8(1), 8.8(3).   Among these, one then finds only four sequences also appearing in Table~\ref{tab:gofklist}.  The sequences and their associated lens space $L(p,q)$ and homology class $k \mod p$ are in Table~\ref{tab:smalltypes}.  
\begin{table}[h]
\begin{tabular}{c||c|c}
Prop in \cite{Greene2013Realization} & Sequence & Lens space and homology class\\
\hline 
6.5(1) &  $(2^{[n]})$, $n\geq 1$ & $L(n+1,1)$, $k=1$ \\
6.5(2) & $(2,2,3,5)$  & $L(32,7)$, $k=5$ \\
8.7(2) & $(4,3,2)$ & $L(18, 11)$, $k=5$ \\
8.7(3) & $(2,3,4)$ & $L(18, 5)$, $k=7$
\end{tabular}
\caption{Norm sequences of small type lens spaces containing genus one fibered knots.}
\label{tab:smalltypes}
\end{table}
An orientation preserving homeomorphism relates the last two.

For the large types, Greene gives the following six norm sequences which we list in Table~\ref{tab:largetypes} by the associated proposition number in \cite{Greene2013Realization}.
\begin{table}[h]
\begin{tabular}{c||c}
Prop in \cite{Greene2013Realization} & Sequence\\
\hline
6.5(3) & $(a_1, \dots, a_\ell, 2, b_m, \dots, b_2)$ \\
8.3(1),(3) & $(a_1, \dots, a_\ell+b_m, \dots, b_2)$ \\
8.3(2)&$(a_1, \dots, a_\ell, 5, b_m, \dots, b_2)$ \\
6.2(3) &$(a_1, \dots, a_\ell+1, 2, 2, b_m + 1, \dots, b_2)$ \\
7.5(1)& $(a_1, \dots, a_\ell, b_m, \dots, b_1)$ \\
8.2& $(a_1, \dots, a_\ell+b_m+1,\dots,b_1)$.
\end{tabular}
\caption{Norm sequences of large type lens spaces}
\label{tab:largetypes}
\end{table}
For each, the integers  $a_1, \dots, a_\ell\geq 2$ and $b_1, \dots, b_m \geq 2$ satisfy
$1/[a_1, \dots, a_\ell] + 1/[b_1, \dots, b_m] = 1$.  
Note that a choice of integers $a_1, \dots, a_\ell \geq2$ determines the integers $b_1, \dots, b_m \geq2$ by the Riemenschneider {\em point rule} \cite{riemenschneider} (or equivalently by taking a ``dual path'' in the Farey Tessellation).
By the point rule, exactly one of $a_\ell$ and $b_m$ equals $2$.

We split the argument into the cases where $m=1$ and $m>1$.  For the lens spaces $L(p,q)$ obtained here, the homology class $k \mod p$ of the surgery dual may be determined from the property that $-k^2 =q \mod p$.

{\bf Case $m=1$:}   Then $b_m=b_1$ so that $(a_1, \dots, a_\ell) = (2^{[b_1-1]})$, and we have six types of sequences.  To the right of them, we give any constraints on $b_1\geq 2$ needed to produce a sequence in Table~\ref{tab:gofklist}, the resulting sequence, and the associated lens space.
\begin{table}[h]
\begin{tabular}{c||c|c|c|c}
Prop in \cite{Greene2013Realization} & Sequence & Constraint & Sequence in Table~\ref{tab:gofklist}& Lens Space\\
\hline
6.5(3) &  $(2^{[b_1]})$ & $b_1\geq 2$ & $(2^{[b_1]})$ & $L(b_1+1,1)$\\
8.3(1),(3) & $(2^{[b_1-2]})$ & $b_1 \geq 3$ & $(2^{[b_1-2]})$ & $L(b_1-1,1)$\\
8.3(2)& $(2^{[b_1-1]},5)$ & $b_1 = 3$ & $(2,2,5)$ & $L(13,9)$\\
6.2(3) & $(2^{[b_1-2]},3,2,2)$ & $b_1=2$ & $(3,2,2)$ & $L(7,3)$ \\
7.5(1) & $(2^{[b_1-1]},b_1)$ & $b_1=3$, $b_1=4$ & $(2,2,3)$, $(2,2,2,4)$ & $L(7,5)$, $L(13,10)$\\
8.2 & $(2^{[b_1-1]},b_1+3)$ & $b_1=3$ & $(2,2,5)$ & $L(13,9)$
\end{tabular}
\caption{Norm sequences with $m=1$ of large type lens spaces containing a genus one fibered knot.}
\label{tab:largetypem1}
\end{table}

{\bf Case $m>1$:}  

Let us observe two things for a sequence $(c_1, \dots, c_n)$ in Table~\ref{tab:gofklist}.
First, if $c_i=c_{i+1}=2$, then either $c_j=2$ for all $j <i$ or $c_j=2$ for all $j>i+1$ as well.
Next, if $c_i \neq 2$, then one of the following occurs:
\begin{itemize}
\item $i=1$ or $n$,
\item $c_i=4$ and $c_j=2$ for $j\neq i$, or
\item $c_i=3$ and either $i=2$ or $i=n-1$ and $c_j=2$ for $j>2$ or $j<n-1$ respectively.
\end{itemize}

We now examine the implications of these for the large type lens space surgery sequences listed in Table~\ref{tab:largetypes}.

\smallskip
\noindent Type $(a_1, \dots, a_\ell, 2, b_m, \dots, b_2)$:

If $a_\ell =2$ and $b_m \neq 2$, then for the sequence to be in Table~\ref{tab:gofklist} we must have $(a_1, \dots, a_\ell) = (2^{[t]})$ for some integer $t$ due to the consecutive $2$s.  The point rule implies $m=1$, a contradiction.

If $a_\ell \neq 2$ and $b_m=2$, then the same reasoning implies $\ell =1$ and $(b_1, \dots, b_m) = (2^{[a_1-1]})$.  Thus the sequence is $(a_1, 2, 2^{[a_1-2]})$.  For this to be in Table~\ref{tab:gofklist}, we must have either $a_1=3$ or $a_1=4$ which give the sequences $(3,2,2)$ and $(4,2,2,2)$ respectively.  The associated lens spaces are $L(7,3)$ and $L(13,4)$.

\smallskip
\noindent  Type $(a_1, \dots, a_\ell+b_m, \dots, b_2)$:

Since $a_\ell+b_m \geq 5$,  for the sequence to be in Table~\ref{tab:gofklist} either $\ell=1$ or $m=2$.  If $\ell =1$ then the point rule implies $(b_1, \dots, b_m) = (2^{[a_1-1]})$ so that the sequence is $(a_1+2,2^{[a_1-3]})$.  Hence $a_1=5$, yielding the sequence $(7,2,2)$ and associated lens space $L(19,3)$.

So assume $\ell \neq 1$ and $m=2$. Then since  $a_\ell+b_2 \geq 5$ and $\ell \neq 1$ we have $(a_1, \dots, a_\ell) = (a_1,2,a_3)$ or $(2^{[t]},3,a_\ell)$.   By the point rule and $m=2$, the former implies $(a_1, 2, a_3)=(2,2,3)$ or $(3,2,2)$ with $(b_1, b_2) = (4,2)$ or $(2,4)$ respectively.  Thus the sequence is either $(2,2,5)$ or $(3,2,6)$ respectively. Their associated lens spaces are $L(13,9)$ and $L(27,11)$

 For the latter, the point rule and $m=2$ imply $a_\ell=2$ and $(b_1,b_2) = (t+1, 3)$ with $t\geq 1$.  The sequence is then $(2^{[t]},3,5)$ and its associated lens space is $L(9t+14,-9)$.   (Here, $9t+14=ik-1$ with $k=3$.)

\smallskip
\noindent  Type $(a_1, \dots, a_\ell, 5, b_m, \dots, b_2)$:

Since $m>1$, the $5$ cannot be at the beginning or end of the sequence.  This sequence cannot be in Table~\ref{tab:gofklist}.

\smallskip
\noindent  Type $(a_1, \dots, a_\ell+1, 2, 2, b_m+1, \dots, b_2)$:

The middle pair of $2$'s imply that either $a_\ell = 1$ or $b_m =1$.  This is a contradiction.

\smallskip
\noindent  Type $(a_1, \dots, a_\ell, b_m, \dots, b_1)$:

By reversal and appealing to Case $m=1$, we may assume both $m>1$ and $\ell>1$.  Also by reversal, we may assume $a_\ell=2$ and $b_m\neq2$.  Since $m>1$, $(a_1, \dots, a_\ell) \neq (2^{[t]})$ for any $t>0$.  Thus $a_k>2$ for some $k<\ell$.  Since $a_\ell=2$ is between $b_m$ and $a_k$, for the sequence to be in Table~\ref{tab:gofklist} we must have $\ell=2$ and $m=1$.  This is a contradiction.

\smallskip
\noindent  Type $(a_1, \dots, a_\ell+b_m+1,\dots, b_1)$:

Since $a_\ell + b_m +1 \geq 6$, the argument for Type (3) applies.
\end{proof}

\subsection{Proof of Theorem~\ref{thm:altGOFKsurg}}

\begin{proof}
Let $L(p,q)$ be the lens space obtained by positive doubly primitive surgery on a Berge knot in family VII or VIII.
Assume the lens space $L(p\pm1,q')$ can be obtained as an alternative surgery on this knot.  By Lemma~\ref{lem:gofktool}(3), it contains a genus one fibered knot.
By Lemma~\ref{lem:gofktool}(4)  this lens space may only be among the lens spaces listed in Lemma~\ref{lem:gofklens}.  Lemma~\ref{lem:gofktool}(1) and (2) reduces this list to the lens spaces $L(18,11)$, $L(32,7)$, and $L(18t'+14,-9)$ for $t'\geq1$.   

If $L(p\pm1,q') =L(18,9)$ then $p=19$.  (The proof of Lemma~\ref{lem:gofktool}(1) shows $p\neq17$.)  The only non-torus knot in family VII and VIII for which $19$--surgery yields a lens space is the pretzel knot $P(-2,3,7)$.  This knot has the required $18$--surgery.

If $L(p\pm1,q') = L(32,7)$, then either $p=31$ or $p=33$.  Equation ($\star$) has no solution when $p=33$.  When $p=31$, equation ($\star$) has the solutions $(p,q,k) = (31,6,5)$, $(31,26,25)$, $(31,17,18)$, and $(31,11,12)$.  By Lemma~\ref{lem:simpleknotequiv}, the first two give orientation preserving homeomorphic knots.  These are torus knots (e.g.\ by Saito \cite{saito2008dual}).  
The last two knots are also orientation preserving homeomorphic by Lemma~\ref{lem:simpleknotequiv}.  There the corresponding knot is surgery dual to the knot $\WSL_{+1}$ which also has the required $32$--surgery.

For $L(p\pm1,q')=L(18t'+14,-9)$ with $t'\geq1$, the lens space has the positive surgery description  $(2t'+2,3,2,2,2)$.  From Table~\ref{tab:gofklist}, a genus one fibered knot in this lens space has exponent sum $2t'-3$.  Lemma~\ref{lem:gofktool}(3) then implies $t' \in \{1,2,3\}$ giving the lens spaces $L(32,-9)$, $L(50,-9)$, and $L(68,-9)$.  Since $L(32,-9)$ is orientation preserving homeomorphic to $L(32,7)$ which we addressed in the previous case, we only need to handle the last two.

If $p\pm1=50$, then either $p=49$ or $p=51$.  Equation ($\star$) has no solution when $p=51$.  When $p=49$, equation ($\star$) has the solutions $(p,q,k) = (49,19,18)$ and $(49,31,30)$.  These correspond to orientation preserving homeomorphic knots by Lemma~\ref{lem:simpleknotequiv} that have genus $17$ by Theorem~\ref{thm:simpleknoteulerchar}.  However $L(50,-9)$ contains no primitive simple knots of this genus which we also calculate by Theorem~\ref{thm:simpleknoteulerchar}, so this case does not occur.

If $p\pm1=68$, then either $p=67$ or $p=69$.   Equation ($\star$) has no solution when $p=69$.  When $p=67$, equation ($\star$) has the solutions $(p,q,k) = (67,30,29)$ and $(67,38,37)$.  These correspond to orientation preserving homeomorphic knots of genus $25$.  However $L(68,-9)$ contains no primitive simple knots of this genus, so this case does not occur.  Again we use Lemma~\ref{lem:simpleknotequiv} and Theorem~\ref{thm:simpleknoteulerchar}.
\end{proof}

\section{Knots with once-punctured torus Seifert surfaces}

In \cite{baker-oncepuncturedtoriandknotsinlensspaces}, the first author classifies the non-nullhomologous knots in lens spaces (other than $S^1\times S^2$) with once-punctured torus surfaces properly embedded in their exterior.  We call such surfaces once-punctured torus Seifert surfaces.  

Let $W$ be the Whitehead link , and let $Y$ be the link that covers $W$ in the double cover of $S^3$ branched over one component of $W$.  These are both pictured in Figure~\ref{fig:WandY}.  Let $K^s_r$ be the core of $r$--surgery in $W(r,s)=W(s,r)$.  Let $K^{a,b}_{c}$ be the core of the $c$--surgery in $Y(a,b;c) = Y(b,a;c)$ where the $c$--surgery is on the ``axis'' component of $Y$.  (This was written as $Y(c;a,b)$ in  \cite{baker-oncepuncturedtoriandknotsinlensspaces}.)

\begin{figure}
\centering
\psfrag{W}{$W$}
\psfrag{Y}{$Y$}
\includegraphics[height=1in]{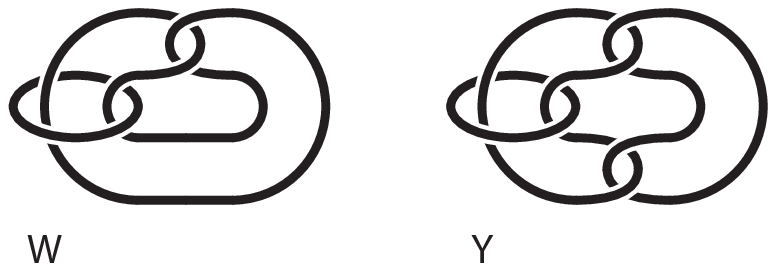}
\caption{}
\label{fig:WandY}
\end{figure}

\begin{theorem}[Baker \cite{baker-oncepuncturedtoriandknotsinlensspaces}]\label{thm:optknots}
If $K$ is a non-nullhomologous knot in a lens space with an incompressible once-punctured torus Seifert surface, then up to homeomorphism $K$ is one of the following knots:
\begin{enumerate}
\item  $K_{-6+1/k}^{-1} \subset L(6k-1,2k-1)=W(-1, -6+1/k)$,
\item  $K_{-4+1/k}^{-2} \subset L(8k-2,2k-1)=W(-2, -4+1/k)$,
\item  $K_{-3+1/k}^{-3} \subset L(9k-3,3k-2)=W(-3, -3+1/k)$,
\item  $K^{-3+1/k}_{-3} \subset L(9k-3,3k-2)=W(-3+1/k,-3)$ for $k \neq 0$, or
\item  $K^{1/k,1/\ell}_{-2} \subset L(8k\ell-2,4k\ell-2k-1)=Y(1/k,1/\ell;-2)$ for $k,\ell \neq 0$.
\end{enumerate}
Furthermore:
\begin{enumerate}
\item each of these knots is a simple knot,
\item the first three families are all torus knots and are thus fibered,
\item $K^{-3+1/k}_{-3}$ is fibered only if $|k|=1$, and
\item $K^{1/k,1/\ell}_{-2}$ is fibered only if $|k|=|\ell|=1$.
\end{enumerate}
\end{theorem}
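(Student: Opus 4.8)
The plan is to reduce the classification to a short list of model link surgeries and then pick out those that happen to produce lens spaces. Let $F$ be the incompressible once-punctured torus Seifert surface for $K$, and set $M = L - \nbhd(K)$. First I would arrange that $F$ is $\bdry$-incompressible as well: a $\bdry$-compression of $F$ would produce an annulus or disk spanning surface for $K$, forcing $K$ to be a torus knot or a trivial knot, which is either excluded by the non-nullhomologous hypothesis or degenerate. Cutting $M$ along $F$ yields a compact manifold $M_F$ whose boundary is the genus two surface $F_+ \cup_A F_-$ built from two copies of $F$ joined along the annulus $A = \bdry\nbhd(K) - \bdry F$. The key geometric step is to show that $M_F$ is a genus two handlebody: since $F$ is incompressible and $\bdry$-incompressible, $M_F$ is boundary-irreducible, so if it were not a handlebody it would contain a closed essential surface or an essential annulus, and a careful analysis of how such a surface meets $F_\pm$ and $A$ should produce either an essential torus or a reducing sphere in $L$, both impossible since $L \not\cong S^1 \times S^2$ is irreducible and atoroidal.

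Granting that $M_F$ is a handlebody, $M$ is recovered by gluing $M_F$ to itself along the two copies $F_\pm$ of the once-punctured torus in its boundary, the gluing being an element of the mapping class group of the once-punctured torus. Equivalently, $F$ is presented as a plumbing (Murasugi sum along a square) of two embedded annuli whose cores are curves $a,b \subset F$ meeting once. I would then argue that, for $F$ to remain incompressible inside a lens space, the annulus cores are strongly constrained — they must be unknotted or isotopic into a Heegaard torus — and the plumbing square carries at most one full twist. Pushing this through identifies $(L,K)$ together with $F$ as the core of a Dehn filling of the Whitehead link $W$ when the two bands are unlinked enough, and of the branched double cover link $Y$ when the plumbing square carries the extra twist; that is, $(L,K) = (W(r,s), K^s_r)$ or $(Y(a,b;c), K^{a,b}_c)$ for suitable slopes. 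I expect this step, controlling the banding/plumbing structure of $F$ using only irreducibility and atoroidality of $L$, to be the main obstacle; everything after it is organizational or computational.

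With the two model links in hand, the list $(1)$--$(5)$ follows from the classification of Dehn fillings of the Whitehead link complement (the census manifold $m129$) and of the exterior of $Y$: one records which fillings of these cusped manifolds yield lens spaces, which cuts the two-parameter (respectively three-parameter) families down to the stated one-parameter (respectively two-parameter) families, keeping track of which solid-torus fillings leave the core curve non-nullhomologous. This is bookkeeping against the exceptional filling catalogues for these two manifolds, together with the explicit homological computations giving the lens spaces $L(6k-1,2k-1)$, $L(8k-2,2k-1)$, $L(9k-3,3k-2)$, and $L(8k\ell-2,4k\ell-2k-1)$.

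For the ``Furthermore'' assertions: that each $K$ is a simple knot I would verify by exhibiting a grid number one presentation directly from the surgery picture on $W$ or $Y$, these links being symmetric enough that the core curve visibly lies on a genus one Heegaard torus in standard position. For families $(1)$--$(3)$, the slopes $-1,-2,-3$ on the relevant component of $W$ fill the Whitehead complement to a Seifert fibered space in which the core $K^s_r$ is a fibre, hence a torus knot (and so fibered). For fiberedness of $K^{-3+1/k}_{-3}$ and of $K^{1/k,1/\ell}_{-2}$, having identified these as explicit simple knots $K(p,q,k)$, I would compute the top Alexander grading of $\widehat{HFK}$ from the grid number one complex as in the proof of Theorem~\ref{thm:simpleknoteulerchar}, and use that a fibered knot must have monic Alexander polynomial, i.e.\ a single generator in the top grading; a short computation then shows this occurs only for $|k|=1$, respectively $|k|=|\ell|=1$. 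The only delicate point here is matching the surgery-theoretic descriptions with the homology classes $k \bmod p$ so that Theorem~\ref{thm:simpleknoteulerchar} applies, which is routine given the explicit lens spaces listed above.
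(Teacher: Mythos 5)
First, a point of order: the paper does not prove Theorem~\ref{thm:optknots} at all --- it is imported verbatim from the first author's earlier work \cite{baker-oncepuncturedtoriandknotsinlensspaces}, so there is no in-paper argument to compare yours against. Judged on its own terms, your outline has the right shape at both ends (reduce to surgeries on the model links $W$ and $Y$; verify the ``Furthermore'' items by Floer-homological obstructions), but the geometric core is not sound as written. The claim that $M_F$ is a genus two handlebody is precisely the assertion that $F$ is a \emph{free} Seifert surface, and your argument for it is internally inconsistent: you deduce that $M_F$ is boundary-irreducible and then want to conclude it is a handlebody, but a handlebody of positive genus has compressible boundary. Nor does the fallback work --- an essential surface in $M_F$ does not reglue to a closed essential surface in $L$, so irreducibility and atoroidality of $L$ do not force $M_F$ to be a handlebody. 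Even granting freeness, the passage from ``$F$ is a plumbing of two annuli'' to ``the bands are unknotted or lie in a Heegaard torus, the plumbing square carries at most one twist, and hence $(L,K)$ is a filling of $W$ or of $Y$'' is an assertion rather than an argument; this is where essentially all of the content of the theorem lives, and nothing in the sketch produces it.

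There is also a smaller but genuine logical error at the outset: you discard the $\bdry$-compressible case on the grounds that it forces $K$ to be a torus knot, ``excluded by the non-nullhomologous hypothesis.'' Non-nullhomologous torus knots in lens spaces exist in abundance, and families (1)--(3) of the theorem consist precisely of such torus knots, so that case must be carried through the classification rather than dismissed. Further downstream, the ``bookkeeping against exceptional filling catalogues'' is not available off the shelf for the three-cusped exterior of $Y$, and the identification of the cores as simple knots --- which your $\widehat{HFK}$ fiberedness obstruction via Theorem~\ref{thm:simpleknoteulerchar} depends on --- requires an explicit isotopy to a grid-number-one position, not merely the remark that the links are ``symmetric enough.'' The one step whose logic is essentially correct is the direction of the fiberedness argument: using failure of monicity in the top Alexander grading as an obstruction is the right way to prove the ``only if'' statements in items (3) and (4) of the addendum.
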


\begin{remark}
As noted in \cite{baker-oncepuncturedtoriandknotsinlensspaces}, observe that
\begin{enumerate}
\item $K^{1/k,1}_{-2} = K^{-4+1/k}_{-2} \subset L(8k-2,2k-1)$ and
\item $K^{1/k,-1}_{-2}$ is the mirror of $K^{-4-1/k}_{-2}$.
\end{enumerate}
\end{remark}

\begin{remark} 
The exterior of $W$ is hyperbolic and  orientation reversing homeomorphic to $M_3(-1)$.  The exterior of $Y$ is also hyperbolic.  Hence the knots $K^{-3+1/k}_{-3}$ and $K^{1/k,1/\ell}_{-2}$ of the last two families of Theorem~\ref{thm:optknots} are generically hyperbolic.
\end{remark}

From this theorem we easily obtain a classification of non-nullhomologous knots with once-punctured torus Seifert surfaces admitting lens space surgeries.

\begin{theorem}\label{thm:optsurg}
The following families of pairs of knots are surgery dual for every $k, \ell \in \Z$.
\begin{enumerate}
\item  $K^{-1}_{-6+1/k}\subset L(6k-1,2k-1)$ and $K^{-1}_{-6+1/\ell} \subset L(6\ell-1,2\ell-1)$ 
\item $K^{-2}_{-4+1/k} \subset L(8k-2,2k-1)$ and $K^{-2}_{-4+1/\ell} \subset L(8\ell-2,2\ell-1)$
\item $K^{-3}_{-3+1/k} \subset L(9k-3,3k-2)$ and $K^{-3}_{-3+1/\ell} \subset L(9\ell-3,3\ell-2)$
\item $K^{-3+1/k}_{-3} \subset L(9k-3,3k-2)$ and $K^{-3+1/k}_{\infty} \subset L(3k-1,-k)$
\item $K^{-4+1/k}_{-2} \subset L(8k-2,2k-1)$ and $K^{-4+1/k}_{\infty} \subset L(4k-1,-k)$
\item $K^{-6+1/k}_{-1} \subset L(6k-1,2k-1)$ and 
$K^{-6+1/k}_{\infty} \subset L(6k-1,-k)$
\end{enumerate}
Every non-nullhomologous knot in a lens space other than $S^1 \times S^2$ with an incompressible once-punctured torus Seifert surface and a non-trivial lens space surgery belongs to one of the first five families above. 
\end{theorem}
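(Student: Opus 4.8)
The plan is to prove the two assertions of Theorem~\ref{thm:optsurg} separately, using Baker's classification (Theorem~\ref{thm:optknots}) as the principal input. For the surgery-dual statement the key observation is that within each of the six families the two listed knots are the cores of two distinct Dehn fillings of one and the same one-cusped manifold, and hence are surgery dual by definition. Indeed, removing a neighborhood of the core of the $r$-surgered component of $W(r,s)$ leaves $W$ with only the other component filled (along $s$), so the exterior of $K^{s}_{r}$ depends only on the superscript $s$; symmetrically the exterior of $K^{a,b}_{c}$ depends only on $\{a,b\}$. Thus in families $(1)$, $(2)$, $(3)$ both knots have the common exterior $W(-1,\cdot)$, $W(-2,\cdot)$, $W(-3,\cdot)$, and in families $(4)$, $(5)$, $(6)$ they have the common exterior $W(-3+1/k,\cdot)$, $W(-4+1/k,\cdot)$, $W(-6+1/k,\cdot)$; in each case the two knots are cores of fillings of this manifold along distinct slopes. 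The only computations required are the lens space labels: the slopes written $\infty$, $-1$, $-2$, $-3$ correspond to surgeries on unknotted Whitehead-link components, so the filled manifold is read off as a surgery on an unknot (e.g.\ the $\infty$-filling of a $p/q$-surgered unknot is $L(p,q)$), matching the stated $L(\cdot,\cdot)$'s, while the identifications for families $(1)$--$(3)$ are already recorded in Theorem~\ref{thm:optknots}. Taking $\ell=0$ in the family-$(1)$--$(3)$ lists yields $K^{-1}_{\infty}\subset S^3$ and its analogues, so these subsume the ``$\infty$'' duals there.

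For the classification, let $K\subset Y$ be a non-nullhomologous knot in a lens space $Y\neq S^1\times S^2$ with an incompressible once-punctured torus Seifert surface and a non-trivial lens space surgery to $Y'$, with surgery dual $K'$. By Theorem~\ref{thm:optknots}, $K$ is one of the five families $(1)$--$(5)$ listed there; write $M$ for its exterior, a one-cusped manifold carrying the two lens space slopes of $Y$ and $Y'$. It suffices to show that $(K\subset Y, K'\subset Y')$ appears among families $(1)$--$(5)$ of Theorem~\ref{thm:optsurg}. Suppose first $K$ lies in one of the torus-knot families $(1)$--$(3)$ of Theorem~\ref{thm:optknots}. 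Then $M$ is Seifert fibered over the disk with at most two exceptional fibers. By Theorem~\ref{thm:nonhyperbolic}(2) every slope at distance $1$ from the regular-fiber slope $\lambda$ yields a lens space, and by the Cyclic Surgery Theorem \cite{CGLS} these --- together with $\lambda$ --- are the only non-hyperbolic fillings; hence the lens space surgeries on $K$ are precisely the fillings of $M$ along slopes of distance $1$ from $\lambda$, and the duals are their cores. Reading $\lambda$ off the explicit description $M=W(-1,\cdot)$ (resp.\ $W(-2,\cdot)$, $W(-3,\cdot)$) and computing these fillings shows the dual knots run through $K^{-1}_{-6+1/\ell}$ (resp.\ $K^{-2}_{-4+1/\ell}$, $K^{-3}_{-3+1/\ell}$), $\ell\in\Z$, so $(K,K')$ lies in family $(1)$, $(2)$, or $(3)$.

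Now suppose $K$ lies in family $(4)$ or $(5)$ of Theorem~\ref{thm:optknots}. Since the Whitehead link exterior is orientation-reversing homeomorphic to $M_3(-1)$, in family $(4)$ we may write $M=M_3(-1,\beta)$ for the appropriate slope $\beta$; in family $(5)$ the exterior of $Y$ is a double cover of the Whitehead link exterior and these knots also arise as fillings of the chain-link complement $M_5$, and in the cases $|k|=1$ or $|\ell|=1$ they reduce to $W$-based knots by the identity $K^{1/k,1}_{-2}=K^{-4+1/k}_{-2}$. As $M$ has at least two lens space fillings, it is either on the Martelli--Petronio list of $M_3$-fillings (up to mirror image) with two lens space fillings (Theorem~\ref{thm:MMtwolensfillings}), or --- if it has three --- it lies in family $\A$ or $\B$ (Theorem~\ref{thm:MMthreelensfillings}); in either event the complete set of lens space fillings and their slopes is explicitly known. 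Transporting those slopes back to $W$- (resp.\ $Y$-) coordinates identifies the two fillings as the ``$-3$'' (resp.\ ``$-2$'') filling, giving $L(9k-3,3k-2)$ (resp.\ $L(8k-2,2k-1)$), and the ``$\infty$'' filling, giving $L(3k-1,-k)$ (resp.\ $L(4k-1,-k)$), with dual $K^{-3+1/k}_{\infty}$ (resp.\ $K^{-4+1/k}_{\infty}$); thus $(K,K')$ belongs to family $(4)$ or $(5)$, and for family $(5)$ with $|k|,|\ell|\neq1$ one finds no second lens space surgery exists.

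The main obstacle I anticipate is this final step: the M\"obius-transformation bookkeeping needed to pass between Dehn-filling coordinates on $W$ and $Y$ and the standard coordinates on $M_3$ in which the Martelli--Petronio tables are stated, together with a careful treatment of the finitely many degenerate parameter values at which a third lens space filling appears --- these account for the Figure Eight Knot Sister phenomenon of Corollary~\ref{cor:figeightknottriple} rather than producing a new family --- so that they contribute nothing spurious to the classification.
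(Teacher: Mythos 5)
Your first paragraph (the six families are pairs of cores of distinct fillings of a common one-cusped exterior, with the lens space labels read off from unknot surgeries) matches the paper's treatment of the surgery-dual assertion, and your handling of the torus-knot families $(1)$--$(3)$ in the classification is also fine. The problem is in the last step, for families $(4)$ and especially $(5)$.

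For family $(5)$ your argument has a genuine gap. You assert that the exterior $M$ of $K^{1/k,1/\ell}_{-2}$, having two lens space fillings, ``is either on the Martelli--Petronio list of $M_3$-fillings \dots or lies in family $\A$ or $\B$.'' Theorem~\ref{thm:MMtwolensfillings} classifies only those one-cusped hyperbolic manifolds that are \emph{obtained by filling $M_3$} and have two lens space fillings; it says nothing about a manifold not known to be such a filling, and nothing in the paper (or in your proposal) establishes that $Y(1/k,1/\ell;\cdot)$ --- a filling of the exterior of the three-component link $Y$ --- is a filling of $M_3$. Your fallback, that these knots ``arise as fillings of $M_5$,'' is both unsubstantiated and insufficient: the only reduction from $M_5$ to $M_3$ in the paper (Theorem~\ref{thm:MT5CimpliesMM}) requires \emph{three} lens space fillings, whereas here the whole question is whether a \emph{second} one exists. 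So your method gives no way to rule out extra lens space surgeries on $K^{1/k,1/\ell}_{-2}$ for $|k|,|\ell|\neq 1$; the sentence ``one finds no second lens space surgery exists'' has no mechanism behind it.

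The idea you are missing, which is the engine of the paper's proof, is to apply Theorem~\ref{thm:optknots} to the \emph{surgery dual} as well: the dual lives in the same exterior and bounds the same once-punctured torus, so it is either nullhomologous or one of the classified knots, all of which have homological order at most $3$. Hence the candidate dual meridian must lie within distance $3$ of the boundary slope of the once-punctured torus, while CGLS forces it to be distance $1$ from the original meridian. This cuts the candidates down to $\{-2,\infty\}$ for $K^{-3+1/k}_{-3}$ and to $\{-3,-3/2,-1,\infty\}$ for $K^{1/k,1/\ell}_{-2}$, and the finitely many resulting fillings $Y(1/k,1/\ell;c)$ are then tested directly by an elementary computation (surgery on the $(2,4)$--torus link), which is where the conditions $|k|=1$ or $|\ell|=1$ emerge. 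Your Martelli--Petronio route could in principle be made to work for family $(4)$, since $W(-3+1/k,\cdot)$ genuinely is a filling of $M_3(-1)$ up to mirror, at the cost of nontrivial coordinate bookkeeping; but for family $(5)$ you need the distance argument (or some substitute for it), and without it the classification statement is not proved.
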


\begin{remark}\label{rem:nonfibered}\
\begin{enumerate}
\item The surgery duals in the fourth and fifth families consist of a null-homologous and non-nullhomologous pair. 
\item The surgery duals in the sixth family consist of pair of nullhomologous genus one knots.  
\item In these last three families, the knots are not fibered if $|k|>1$ (since only integral filling of one component of the exterior of $W$ can produce a once-punctured torus bundle).
\end{enumerate}
\end{remark}

\begin{prob}
Classify lens space surgeries on nullhomologous genus one knots in lens spaces.
\end{prob}

Remark~\ref{rem:nonfibered}(3) gives the following corollary.
\begin{cor}\label{cor:nonfibered}
There are infinitely many non-fibered hyperbolic knots in lens spaces with non-trivial lens space surgeries. \qed
\end{cor}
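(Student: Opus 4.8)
The plan is to exhibit an explicit infinite subfamily among the knots classified in Theorem~\ref{thm:optsurg}. I would work with the fourth family, $K^{-3+1/k}_{-3} \subset L(9k-3,3k-2)$, although the fifth family $K^{-4+1/k}_{-2} \subset L(8k-2,2k-1)$ would serve equally well. By Theorem~\ref{thm:optsurg}(4) each such knot has a non-trivial lens space surgery: the surgery dual is $K^{-3+1/k}_{\infty} \subset L(3k-1,-k)$, and this surgery is non-trivial for $k \neq 0$ since $|3k-1| \neq |9k-3|$, so the surgered manifold is not homeomorphic to the ambient lens space.

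Next I would pin down hyperbolicity for all but finitely many $k$. By Theorem~\ref{thm:optknots}(4), the exterior of $K^{-3+1/k}_{-3}$ is obtained from the exterior of the Whitehead link $W$ by Dehn filling one component along the slope $-3+1/k$, leaving the other component (which becomes the core $K^{-3+1/k}_{-3}$) unfilled. Since $S^3 - \nbhd(W)$ is hyperbolic (it is orientation-reversingly homeomorphic to $M_3(-1)$, as noted in the remark following Theorem~\ref{thm:optknots}), Thurston's hyperbolic Dehn surgery theorem applies to the filled cusp: only finitely many slopes yield a non-hyperbolic result, and the slopes $-3+1/k$, $k \in \Z \setminus \{0\}$, are pairwise distinct, hence all but finitely many of them lie outside the exceptional set. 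Therefore $K^{-3+1/k}_{-3}$ is a hyperbolic knot for all sufficiently large $|k|$.

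Then I would invoke Remark~\ref{rem:nonfibered}(3): for $|k|>1$ the knot $K^{-3+1/k}_{-3}$ is not fibered, because a once-punctured torus bundle can only arise from an \emph{integral} filling of one component of $S^3 - \nbhd(W)$, while $-3+1/k \notin \Z$ when $|k|>1$. Finally, for distinctness: the knots $K^{-3+1/k}_{-3}$ lie in lens spaces of order $|9k-3|$, and these orders are pairwise distinct and tend to infinity, so distinct values of $k$ give non-homeomorphic knots. Restricting to an infinite set of integers $k$ with $|k|>1$ and $|k|$ large enough to guarantee hyperbolicity then produces infinitely many non-fibered hyperbolic knots in lens spaces, each with a non-trivial lens space surgery, which is the assertion of Corollary~\ref{cor:nonfibered}.

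The only point requiring real care — what ``generically hyperbolic'' is hiding — is the application of Thurston's theorem: one must be sure that the description of $K^{-3+1/k}_{-3}$ as a partial filling of the hyperbolic manifold $S^3 - \nbhd(W)$ is exactly the one recorded in Theorems~\ref{thm:optknots} and~\ref{thm:optsurg}, and that the relevant filling slopes are genuinely distinct, so that a cofinite set of $k$ yields hyperbolic exteriors. Once that is in hand, non-fiberedness and pairwise distinctness are immediate from Remark~\ref{rem:nonfibered} and the first homology of the ambient lens spaces.
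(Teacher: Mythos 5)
Your proposal is correct and takes essentially the same route as the paper, which simply cites Remark~\ref{rem:nonfibered}(3) together with the observation after Theorem~\ref{thm:optknots} that these knots are generically hyperbolic (their exteriors being non-integral Dehn fillings of the hyperbolic Whitehead link exterior). You have merely made explicit the details the paper leaves implicit: Thurston's hyperbolic Dehn surgery theorem for all but finitely many slopes $-3+1/k$, and distinguishing the knots by the orders of their ambient lens spaces.
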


\begin{remark}
Corollary~\ref{cor:nonfibered} should be contrasted with Theorem~6.5 of \cite{BBCW} (attributed to Rasmussen) which shows that any primitive knot in a lens space with a non-trivial lens space surgery has fibered exterior.  Here, a knot is primitive if, when oriented, it represents a generator of homology. 
\end{remark}

\begin{proof}[Proof of Theorem~\ref{thm:optsurg}]
The list of surgery duals is immediate from Theorem~\ref{thm:optknots}.  The first three families are torus knots and these are the lens space surgeries on torus knots.  The last three families result from $W$ being a link of unknots so that the trivial filling on one component is necessarily a lens space.  To see that these are all the lens space surgeries on such non-nullhomologous knots it remains to address the lens space surgeries on the knots $K^{-3+1/k}_{-3}$ and $K^{1/k,1/\ell}_{-2}$.  When these knots are not torus knots, any lens space surgery must be distance $1$.  Furthermore, by the classification of Theorem~\ref{thm:optknots} such surgery slopes must have distance at most $3$ from the boundary slope of the Seifert surface.

For $K^{-3+1/k}_{-3}$ this means that the dual knot is either $K^{-3+1/k}_{-2}$ or $K^{-3+1/k}_{\infty}$.  The former is not a knot in a lens space.   

For $K^{1/k,1/\ell}_{-2}$ the dual knot must be either $K^{1/k,1/\ell}_{-3}$, $K^{1/k,1/\ell}_{-3/2}$, $K^{1/k,1/\ell}_{-1}$, or $K^{1/k,1/\ell}_{\infty}$.  
Since the first two duals have order $3$, if they are indeed knots in lens spaces they would have to be the knot $K^{-3+1/m}_{-3}$ or its mirror for some $m\neq 0,1$. However, by the former case, such knots are not dual to knots in lens spaces of order $2$.  For the last two duals, we must determine for which $k,\ell$ are the manifolds $Y(1/k,1/\ell;-1)$ and $Y(1/k,1/\ell; \infty)$ lens spaces.  

First note that, by performing the $-1$ and $\infty$ surgeries, $Y(-1/k,-1/\ell;-1)$ is the mirror of $Y(1/k, 1/\ell;\infty)$.  Hence we may work with the latter of the two.
Observe that this is $1/k$ and $1/\ell$ surgery on the $(2,4)$--torus link.  Performing the $1/\ell$ surgery gives the $(2, 1-2\ell)$--torus knot with surgery coefficient $-4\ell+1/k$.  If $\ell=0,1$ then this torus knot is actually an unknot and any $k$ gives a lens space.  But recall we discard $\ell=0$ since the once-punctured torus would compress and if $\ell=1$, then $K^{1/k,1}_{-2} = K^{-4+1/k}_{-2} \subset L(8k-2,2k-1)$.  If $\ell \neq 0,1$ then the $(2, 1-2\ell)$--torus knot  is not the unknot and only $2-4\ell +1/n$ surgery for $n \in \Z$ yields a lens space.  Hence we must have $-4\ell+1/k = 2-4\ell+1/n$ and thus $k=\pm1$.  With $k=1$ we have the knot $K^{-4+1/\ell}_{-2}$ as before (when we had $\ell=1$).  With $k=-1$ we have the mirror of $K^{-4-1/\ell}_{-2}$.
\end{proof}

\begin{cor}\label{cor:figeightknottriple}
The only non-torus, non-trivial knot exterior with an once-punctured torus Seifert surface and three lens space fillings is the Figure Eight Knot Sister manifold, $K^{-5} = W(-5,\cdot)$. Its three lens space fillings are given in the fifth and sixth family of Theorem~\ref{thm:optsurg} with the knots $K^{-5}_{-2} \subset L(10,3)$, $K^{-5}_{\infty} \subset L(5,-1)$, and $K^{-5}_{-1} \subset L(5,1)$ as cores of the fillings.  
\end{cor}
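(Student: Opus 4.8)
The plan is to use a count of filling slopes to force one of the three surgery duals into the classification of Theorem~\ref{thm:optsurg}, and then to read off from the proof of that theorem how many lens space fillings each of the exteriors on its list can actually support. So let $M$ be the exterior of a non-torus, non-trivial knot in a lens space carrying a once-punctured torus Seifert surface $F$, and suppose $M$ has lens space fillings along three distinct slopes $\delta_1,\delta_2,\delta_3$. First I would normalize: if $F$ were compressible, compressing it would produce a properly embedded disk in $M$ bounded by the slope $\partial F$, making $M$ a solid torus and the knot trivial, so we may take $F$ incompressible. As $M$ is not a torus knot exterior it is not Seifert fibered --- the only other non-trivial Seifert fibered knot exteriors in lens spaces are the Klein bottle complements in $\pm L(8,3)$ appearing in the proof of Theorem~\ref{thm:nonhyperbolic}, which have only one non-trivial lens space surgery and hence only two lens space fillings --- so by the Cyclic Surgery Theorem \cite{CGLS} the slopes $\delta_1,\delta_2,\delta_3$ are pairwise distance one. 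Also $\partial F$ is none of the $\delta_i$, since otherwise $F$ would cap off to an embedded torus in the lens space $M(\delta_i)$, forcing $F$ to be $\partial$-compressible and so producing an essential annulus in $M$, contrary to its being neither toroidal nor the Klein bottle case.

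Next I would locate $M$ in the classification. For each $i$ the core $K_i$ of the filling $M(\delta_i)$ has exterior $M$ with $F$ a once-punctured torus Seifert surface, and $K_i$ is null-homologous only when $\Delta(\delta_i,\partial F)=1$. Three distinct slopes that are pairwise distance one form an ideal triangle in the Farey tessellation, and no slope is distance one from all three of its vertices; hence at least one $K_i$, say $K_1$, is non-null-homologous. Granting $M(\delta_1)\not\cong S^1\times S^2$ --- the remaining possibility being dispatched separately --- Theorem~\ref{thm:optsurg} applies and puts $K_1$ in one of its first five families. Families (1)--(3) are torus knots, excluded by hypothesis, so $K_1$ is $K^{-3+1/m}_{-3}$ or $K^{-4+1/m}_{-2}$ for some integer $m\neq 0$, and accordingly $M\cong W(-3+\tfrac1m,\cdot)$ or $M\cong W(-4+\tfrac1m,\cdot)$.

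Finally I would count. Every lens space filling of $M$ other than $M(\delta_1)$ is a non-trivial lens space surgery on $K_1$, and conversely. The proof of Theorem~\ref{thm:optsurg} shows that $K^{-3+1/m}_{-3}$ has exactly one non-trivial lens space surgery (its dual must be $K^{-3+1/m}_{\infty}$, since $K^{-3+1/m}_{-2}$ is not a knot in a lens space), so in the first case $M$ has only two lens space fillings, a contradiction. Likewise $K^{-4+1/m}_{-2}=K^{1/m,1}_{-2}$ has its $\infty$-dual $L(4m-1,-m)$ always a lens space, its order-$3$ duals never knots in lens spaces, and its $-1$-dual $Y(1/m,1;-1)$ a lens space only for $m=-1$ (while $m=1$ gives the torus knot exterior $W(-3,\cdot)$, already excluded); so for $m\neq-1$ we again get only two lens space fillings. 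Hence $m=-1$, i.e.\ $M\cong W(-5,\cdot)=K^{-5}$ is the Figure Eight Knot Sister manifold, and its three lens space fillings occur on the slopes named above, with cores $K^{-5}_{-2}\subset L(10,3)$, $K^{-5}_{\infty}\subset L(5,-1)$, and $K^{-5}_{-1}\subset L(5,1)$ --- the first two comprising the fifth family and the last two the sixth family of Theorem~\ref{thm:optsurg}.

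The main obstacle will be this last step: one must extract from the proof of Theorem~\ref{thm:optsurg} the precise tally of non-trivial lens space surgeries on the members of families (4) and (5), isolating $m=-1$ as the unique case that yields a third filling, and one must also dispose cleanly of the two degenerate possibilities flagged above --- that $\partial F$ is itself a filling slope, or that one of the three fillings is $S^1\times S^2$.
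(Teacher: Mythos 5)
Your proposal is correct and follows essentially the same route as the paper's proof: the Cyclic Surgery Theorem forces the three slopes to be mutually distance one, not all three cores can be nullhomologous, so a non-nullhomologous core must be $K^{-3+1/m}_{-3}$ or $K^{-4+1/m}_{-2}$ by Theorem~\ref{thm:optsurg}, and the tally of non-trivial lens space surgeries in the proof of that theorem isolates $m=-1$, i.e.\ $W(-5,\cdot)$. You additionally spell out two points the paper leaves implicit (the Farey-tessellation reason no slope is adjacent to all three, and why $\partial F$ and the $S^1\times S^2$ filling cannot occur among the $\delta_i$), but these are elaborations rather than a different argument.
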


\begin{proof}
By the Cyclic Surgery Theorem \cite{CGLS}, if a manifold has three lens space fillings then either the manifold is reducible or a Seifert fibered space or the three fillings are mutually distance $1$.  By Theorem~\ref{thm:nonhyperbolic} the only non-hyperbolic manifolds with at least three lens space fillings are the exteriors of torus knots and trivial knots.  If the lens space knots $K_1,K_2,K_3$ are duals to one another by distance $1$ surgeries, then all three cannot be nullhomologous.  Thus, assuming they are not torus knots, up to homeomorphism and reindexing we may take $K_1$ to be the knot $K^{-3+1/k}_{-3}$ or $K^{-4+1/k}_{-2}$ for some integer $k\neq 0,1$. These have surgeries to the nullhomologous knots $K^{-3+1/k}_{\infty}$ and $K^{-4+1/k}_{\infty}$ respectively as described in Theorem~\ref{thm:optsurg}.  From the proof of that theorem, the knots $K^{-3+1/k}_{-3}$ do not have any other lens space surgery when $k\neq 0,1$.  The same proof also shows that among the knots $K^{1/k,1/\ell}_{-2}$, only the knot $K^{-1,1}_{-2} = K^{1,-1}_{-2}=K^{-5}_{-2}$ has the extra lens space surgery $K^{-5}_{-1}$. The manifold $K^{-5} = W(-5,\cdot)$ is the Figure Eight Knot Sister manifold.
\end{proof}

\bibliographystyle{alpha}  
\bibliography{ThreeSurgeriesBib}

\end{document}